\documentclass[12pt]{amsart}
\usepackage{amsfonts, amsbsy, amsmath, amssymb}
\usepackage{xcolor,float}
\usepackage{placeins}
\usepackage{caption}
\usepackage{capt-of}
\usepackage{cleveref}
\usepackage{tikz,graphicx} % Required for inserting images
\usetikzlibrary{positioning,arrows.meta,bending,decorations.markings}
\usepackage{graphicx}
\newtheorem{thm}{Theorem}[section]
\newtheorem{lem}[thm]{Lemma}
\newtheorem{cor}[thm]{Corollary}
\newtheorem{prop}[thm]{Proposition}
\newtheorem{exmp}[thm]{Example}

\newtheorem{rmk}[thm]{Remark}

\newtheorem{ques}[thm]{Question}
\newtheorem{thm-con}[thm]{Theorem-Conjecture}
\numberwithin{equation}{section}

\theoremstyle{definition}
\newtheorem{defn}[thm]{Definition}

\newcommand{\Z}{\Bbb Z}

\title[Quandles and Quandle Rings]{A further study of quandles and quandle rings}

\begin{document}

\author[G. Churchil]{Gregory Churchill} 
\address{Department of Mathematics,
SUNY Oswego, Oswego, NY 13126, USA}
\email{gregory.churchill@oswego.edu}

\author[I. R. Churchill]{Indu Rasika Churchill} 
\address{Department of Mathematics,
SUNY Oswego, Oswego, NY 13126, USA}
\email{indurasika.churchill@oswego.edu}

\author[N. Fernando]{Neranga Fernando} 
\address{Department of Mathematics,
Knox College, Galesburg, IL 61401, USA}
\email{nfernando@knox.edu}

\author[B. Kousik]{Bhitali Kousik}
\address{Department of Mathematical Sciences, Tezpur University, Tezpur, Assam 784028, India}
\email{msp24013@tezu.ac.in}

\subjclass[2020]{57K12, 17A01}
\keywords{Quandle, Core Quandle, Quandle ring, Zero-divisor graph, Idempotent, Nilpotent, Nil clean ring, Prime ring}

\begin{abstract}
We investigate core quandles and idempotents in quandle rings of core quandles. We answer several questions on the rank of core quandles and nontrivial idempotents in quandle rings. We also present solutions to two questions raised in a recent paper about non-trivial idempotents in quandle rings $\mathbb{Z}[R_5]$ and $\mathbb{Z}[C_5]$, where $\mathbb{Z}$, $R_5$ and $C_5$ are the ring of integers, the dihedral quandle of order 5, and the commutative quandle of order 5, respectively. We then study units in extended quandle rings of a trivial quandle and the Joyce quandle, and nilpotent elements in extended quandles rings of a trivial quandle, where the ground ring is an integral domain. As a consequence, we show that the quandle ring and the extended quandle ring of a trivial quandle are not nil clean rings. We also explore prime rings and semi-prime rings among quandle rings. We introduce zero-divisor graphs of quandle rings and find an intriguing mirror symmetry among the in-degree and out-degree of the vertices. Moreover, we find a bivariate polynomial in the ring $(\mathbb{Z}_{2n+1}[Q])[X,Y]$ that determines the commutative quandle of order $2n+1$, where $2n+1$ is prime.
\end{abstract}

\maketitle

\tableofcontents

\section{Introduction}\label{S1}

A \textit{quandle} is a set $Q$ with a binary operation $\ast\,:\,Q\times Q\to Q$ satisfying the following axioms: 
\begin{enumerate}
\item [(i)] For all $x\in Q, \, x\ast x=x$, 
\item [(ii)] For all $y\in Q$, the map $\beta_y\,:\,Q\to Q$ defined by $\beta_y(x)=x\ast y$ is invertible, and 
\item [(iii)] For all $x,y,z\in Q$, $(x\ast y)\ast z=(x\ast z)\ast (y\ast z)$.
\end{enumerate}

\vskip 0.1in 

Quandles in general are non-associative algebraic structures introduced independently in the 1980s by David Joyce \cite{DJ-1982} and S. V. Matveev \cite{SVM-1982} with the purpose of constructing knot invariants in the three space and knotted surfaces in four space. The three axioms of a quandle algebraically encode the three Reidemeister moves in knot theory. Joyce and Matveev introduced the notion of fundamental quandle and showed that the fundamental quandle is a complete knot invariant up to orientation reversal and mirror reflection. Precisely, two knots $K_1$ and $K_2$ are equivalent (up to reverse and mirror image) if and only if the fundamental quandles $Q(K_1)$ and $Q(K_2)$ are isomorphic. Since then quandles have been extensively studied by many due to their connection to lie algebras, hopf algebras, quasigroups, Moufang loops, Frobenius algebras and Yang-Baxter equation, etc. The following are some examples of quandles: 
\begin{exmp}
Any nonempty set $Q$ with the operation $x\ast y=x$ for any $x,y\in Q$ is a quandle, called the \textit{trivial} quandle.    
\end{exmp}
\begin{exmp}
Any group $G$ with the binary operation $x\ast y=yxy^{-1}$ is a quandle, called the \textit{conjugation} quandle. 
\end{exmp}
\begin{exmp}\label{E1.3}
The binary operation $x\ast y=yx^{-1}y$ defines a quandle structure on any group $G$, called the \textit{Core} quandle of $G$, denoted by $\text{Core}(G)$.   
\end{exmp}
\begin{exmp}
Let $G=\mathbb{Z}_n$, where $n>1$ is an integer, in Example~\ref{E1.3}. Then the core quandle of $\mathbb{Z}_n$ whose operation is given by $x\ast y = 2y-x$ is called the \textit{dihedral} quandle of order $n$, and is denoted by $R_n$.    
\end{exmp}
\begin{exmp}\label{E1.5}
Define $x\ast y=(n+1)(x+y)$ for all $x,y\in \mathbb{Z}_{2n+1}$. Then $(\mathbb{Z}_{2n+1},\ast)$ is a commutaitve quandle, whcih is also denoted by $C_{2n+1}$. 
\end{exmp}

A subset of a quandle which is also a quandle under the same binary operation is called a subquandle. Like groups, quandles can also be represented by multiplication tables. The multiplication table of the dihedral quandle of order 3, which is a commutative quandle, is given by 

\[
\begin{tabular}{c | c c c}
$\ast$ & $0$ & $1$ & $2$  \\
\cline{1-4}
$0$& $0$ & $2$ & $1$ \\
$1$& $2$ & $1$ & $0$ \\
$2$& $1$ & $0$ & $2$ \\
\end{tabular}
\]

The second axiom of a quandle says that the right multiplication by an element $y$, $\beta_y$, is a bijection. That is, the columns of the multiplication table of a quandle are permutations. A quandle is \textit{Latin} if for each $a\in Q$, the map $\lambda_a\colon Q\mapsto Q$ defined by $\lambda_a(b)=a\ast b$ is a bijection. Equivalently, $Q$ is Latin if the multiplication table of the quandle is a \textit{Latin square}.

The notions of quandle homomorphisms and isomorphisms are natural. Let $\text{Aut}(Q)$ be the group of all the automorphisms of $Q$. The subgroup of $\text{Aut}(Q)$, generated by the column permutations $\beta_y$, is called the \textit{inner} automorphism group of $Q$, and is denoted by $\text{Inn}(Q)$. The \textit{orbit} of an element $x\in Q$, denoted by $\text{Orb}(x)$, is defined to be 
$$\text{Orb}(x) = \{\phi(x) \mid \phi \in \text{Inn}(Q)\}.$$
A quandle is called  \textit{connected} if it has a single orbit. That is, $Q$ is connected if for all $x, y\in Q$, there exists a map $f\in \text{Inn}(Q)$ that maps $x$ to $y$. A quandle is \textit{faithful} if the mapping $a\mapsto \beta_a$ is an injection from $Q$ to $\text{Inn}(Q)$. 
% \(\text{Orb}(x) = \{\phi(x) \mid \phi \in \text{Inn}(Q)\}\)
A quandle $Q$ is said to be commutative if $x\ast y = y \ast x$ for all $x, y\in Q.$ A commutative quandle is clearly a latin quandle, which implies that the quandle is faithful. 

The rank of a quandle $Q$, denoted by $rk(Q)$, is defined to be the minimal number of elements from $Q$ that generate $Q$. It is easy to see that $rk(Core(\mathbb{Z}))= rk(Core(\mathbb{Z}_n))=2$ for $n>1$. In \cite{BF-2024}, Bardakov and Fedoseev posed the following question:

\begin{ques}\label{Q1.5}
What is the connection between the rank of a group $G$ and the rank of $Core(G)$? Is it true that $rk(Core(G))=rk(G)+1$?
\end{ques}

Motivated by this question, in Section~\ref{S2}, we investigate core quandles of the dihedral group $D_n$, and show that the number of orbits of the core quandle $Core(D_n)$ depends on the parity of $n$. This observation leads to a counterexample to the second part of the Question~\ref{Q1.5}. 

Let $(Q,\ast)$ be a quandle and $\Bbbk$ an associative ring with unity 1. Let $e_x$ be a unique symbol corresponding to each $x\in Q$. Let $\Bbbk[Q]$ be the set of formal expressions of the form $\sum_{x\in Q}\,\alpha_xe_x$, where $\alpha_x\in \Bbbk$ such that all but finitely many $\alpha_x=0$. The set $\Bbbk[Q]$ has a free $\Bbbk$-module structure with basis $\{e_x\,|\,x\in Q\}$ and admits a product given by 
$$\Big(\sum_{x\in Q}\,\alpha_xe_x\Big)\,\Big(\sum_{y\in Q}\,\beta_ye_y\Big)\,=\,\sum_{x,y\,\in Q}\,\alpha_x\beta_y\,e_{x\ast y},$$
where $x, y\in Q$ and $\alpha_x, \beta_y \in \Bbbk$. This turns $\Bbbk[Q]$ into a ring called the \textit{quandle ring} with coefficients in $\Bbbk$. Even though the coefficient ring $\Bbbk$ is associative, the quandle ring $\Bbbk[Q]$ is non-associative when $Q$ is a non-trivial quandle. A Quandle $Q$ can be identified as a subset of the quandle ring $\Bbbk[Q]$ with the mapping $x\mapsto e_x$. Quandle rings were introduced in 2019 by Bardakov, Passi and Singh in \cite{{BPS-2019}}. 

Since the introduction of quandle rings, they have been studied by many researchers for their algebraic properties. In \cite{EFT-2019}, the authors showed that quandle rings are never power-associative when the quandle is non-trivial and $\text{Char}(\Bbbk)\neq 2, 3$. Then they showed that if the quandle $Q$ is a union of a finite orbit $Q_1$ and any quandle $Q_2$, then the quandle ring $\Bbbk[Q]$ is not an integral domain. It was also shown in \cite{EFT-2019} that when the quandle $Q$ is of finite cardinality and $\Bbbk$ is a Noetherian ring, $\Bbbk[Q]$ is a both left and right Noetherian ring. 

The surjective ring homomorphism $\epsilon\colon \Bbbk[Q]\rightarrow \Bbbk$ given by 
$$\epsilon\Big(\sum_{x\in Q}\,\alpha_xe_x\Big)=\sum_{x\in Q}\,\alpha_x$$
is called the \textit{augmentation} map. The kernel $\Delta(Q)$ of $\epsilon$ is a two-sided ideal of $\Bbbk[Q]$, called the \textit{augmentation ideal} of $\Bbbk[Q]$. 

In general, the computation of idempotents is an important area of study in ring theory. Since each quandle element is an idempotent, idempotents in quandle rings seem to be the most natural objects in quandle rings that require a further investigation. Let $Q$ be a quandle and $\Bbbk$ an integral domain with unity. A non-zero element $u\in \Bbbk[Q]$ is called an \textit{idempotent} if $u^2=u$. The set of all idempotents of $\Bbbk[Q]$ is usually denoted by $\mathcal{I}(\Bbbk[Q])$. It is clear that the basis elements of the quandle rings $\{e_x\,|\,x\in Q\}$ are idempotents of $\Bbbk[Q]$, and we refer to them as \textit{trivial idempotents}. A non-trivial idempotent is an element of the quandle ring that is not of the form $e_x$ for any $x\in Q$.

Over the last few years, the study of idempotents in quandle rings has garnered attention due to their applications in knot theory. In \cite{ES-2026}, the authors showed that the idempotents in quandle rings can be used to construct stronger knot invariants. We refer the reader to \cite{BE-2026,BES-2026,BPS-2022,ES-2026,ENSS-2023, Wu-2025} for recent studies on idempotents. In \cite{BE-2026}, the authors posed several important questions regarding idempotents in quandle rings. One of them is the following:

\begin{ques}\label{ques 1.7}
Is it true that for any unital ring $\Bbbk$ without zero divisors and an infintie field $\mathbb{F}$, the quandle ring $\Bbbk[Core(\mathbb{F})]$ has only trivial idempotents? 
\end{ques}

In Section~\ref{S3}, we give two examples of infinite fields $\mathbb{F}$ for which the qundle ring $\Bbbk[Core(\mathbb{F})]$ has non-trivial idempotents, where $\Bbbk$ is a unital ring without zero divisors. 

The following proposition appeared in \cite{BES2-2026} gives a sufficient condition in terms of subquandles of the quandle $Q$ for the quandle ring $\Bbbk[Q]$ to have non-trivial idempotents. 

\begin{prop}\cite[Proposition 12.13]{BES2-2026}\label{prop 5.3}
    Let $Q$ be a quandle and $\Bbbk$ be an integral domain with unity. If $Q$ contains a trivial subquandle of order greater than $1,$ then the quandle ring $\Bbbk[Q]$ has non-trivial idempotents.
\end{prop}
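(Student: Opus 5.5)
Suppose $Q$ contains a trivial subquandle $\{x,y\}$ with $x\neq y$, so $x\ast y=x$ and $y\ast x=y$. I will look for a non-trivial idempotent inside the $\Bbbk$-span of $e_x$ and $e_y$. This span is closed under multiplication, because $e_xe_y=e_{x\ast y}=e_x$, $e_ye_x=e_y$, $e_xe_x=e_x$ and $e_ye_y=e_y$.

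Take $u=\alpha e_x+\beta e_y$ with $\alpha,\beta\in\Bbbk$. Expanding with the rules above gives
\[
u^2=\alpha^2e_x+\alpha\beta e_x+\beta\alpha e_y+\beta^2e_y=\alpha(\alpha+\beta)e_x+\beta(\alpha+\beta)e_y .
\]
Here I use that $\Bbbk$ is commutative, being an integral domain. So $u^2=u$ holds exactly when $\alpha(\alpha+\beta)=\alpha$ and $\beta(\alpha+\beta)=\beta$, since $e_x$ and $e_y$ are distinct basis elements. Both conditions are satisfied whenever $\alpha+\beta=1$.

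The concrete choice is $\alpha=2$ and $\beta=-1$, giving $u=2e_x-e_y$. Then $u^2=2e_x-e_y=u$, and $u\neq0$.

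The remaining step is to check that $u$ is not of the form $e_z$ for any $z\in Q$. The element $u$ has nonzero coefficients on the two distinct basis elements $e_x$ and $e_y$: the coefficient $-1$ is nonzero because $1\neq0$, and the coefficient $2$ is nonzero in characteristic $\neq2$. By contrast, each $e_z$ has exactly one nonzero coefficient. Hence $u\neq e_z$ for every $z$.

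The only subtlety is characteristic $2$, where $2=0$ and $u=-e_y=e_y$ becomes trivial. To handle all integral domains uniformly, I would instead use an element with at least two nonzero coordinates in every characteristic, such as $u=\alpha e_x+(1-\alpha)e_y$ with $\alpha\notin\{0,1\}$. When $\Bbbk\neq\mathbb{F}_2$ such an $\alpha$ exists. For $\Bbbk=\mathbb{F}_2$, the coefficient pair must be $(1,1)$ or a single basis vector, and $(e_x+e_y)^2=0$; so the construction has to be arranged carefully there.

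Given the statement as cited, I would present the main case with $u=2e_x-e_y$, which works when $\mathrm{char}\,\Bbbk\neq2$. I would then note the variant $\alpha e_x+(1-\alpha)e_y$ with $\alpha\notin\{0,1\}$ for characteristic $2$ domains other than $\mathbb{F}_2$. I expect this characteristic bookkeeping to be the only real obstacle.
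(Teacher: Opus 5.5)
The paper gives no proof of this proposition; it quotes it from the literature (Proposition 12.13 of BES2-2026), so there is no in-paper argument to compare with. Your computation is the natural one and it is correct. For $u=\alpha e_x+\beta e_y$ you get $u^2=\alpha(\alpha+\beta)e_x+\beta(\alpha+\beta)e_y$, so every $\alpha e_x+(1-\alpha)e_y$ is an idempotent. It is non-trivial exactly when $\alpha\notin\{0,1\}$. This one family already covers every integral domain with at least three elements. The choice $\alpha=2$ is just one instance of it in characteristic $\neq 2$, so splitting by characteristic is unnecessary.

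The real issue is the case you leave open, $\Bbbk=\mathbb{F}_2$. No more careful construction can handle it, because the statement as quoted is false there. Take $Q=\{x,y\}$ to be the trivial quandle of order $2$; it is itself a trivial subquandle of order $2$. Then $\mathbb{F}_2[Q]=\{0,e_x,e_y,e_x+e_y\}$ and $(e_x+e_y)^2=2e_x+2e_y=0$, so the only idempotents are $e_x$ and $e_y$. Your argument therefore proves exactly the corrected statement with the extra hypothesis $\Bbbk\neq\mathbb{F}_2$, equivalently $|\Bbbk|\geq 3$. You should state that hypothesis and give this counterexample, rather than call $\mathbb{F}_2$ a bookkeeping obstacle.

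If you want a version that also covers $\mathbb{F}_2$, use the following general fact. For any trivial subquandle $T\subseteq Q$ and any $u\in\Bbbk[T]$, one has $u^2=\epsilon(u)\,u$. So the idempotents inside $\Bbbk[T]$ are exactly the nonzero $u$ with $\epsilon(u)=1$. Over $\mathbb{F}_2$ this yields a non-trivial idempotent if and only if $|T|\geq 3$, for instance $e_x+e_y+e_z$. That is the right replacement hypothesis over $\mathbb{F}_2$.
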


In Section~\ref{S3}, we show that the converse of the proposition is not necessarily true. Motivated by the fact that the idempotents in quandle rings can be used to construct knot invariants, we then investigate the idempotents in quandle rings $\Bbbk[\text{Core}(D_3)]$, where $\Bbbk$ is an integral domain and $D_3$ is the dihedral group of order 6. We find necessary and sufficient conditions for the quandle ring $\Bbbk[\text{Core}(D_3)]$ to have non-trivial idempotents. We also present two examples. 

In \cite{BE-2026}, Bardakov and Elhamdadi found necessary and sufficient conditions for quandle rings $\mathbb{Z}[R_5]$ and $\mathbb{Z}[C_5]$ to have nontrivial idempotents $u$ such that $\epsilon(u)=1$. Their conditions were given in the form of systems of non-linear homogeneous multivariate equations which were left unsolved. In Section~\ref{S4}, we use Gr\"{o}bner basis techniques to solve those systems of equations and determine whether the quandle rings $\mathbb{Z}[R_5]$ and $\mathbb{Z}[C_5]$ have nontrivial idempotents $u$ with $\epsilon(u)=1$. 

The extended quandle ring $S$ of $\Bbbk[Q]$ is defined as $S:=\Bbbk[Q]\oplus \Bbbk e,$ where $e(\notin Q)$ is a symbol that satisfies $e(\sum_{i}^{}\alpha_ix_i)=\sum_{i}^{}\alpha_ix_i=(\sum_{i}^{}\alpha_ix_i)e$ ;\cite[p. 614] {BPS-2022}. 

Let $\Bbbk$ be an integral domain with unity $e.$ Then we have $\Bbbk e\cong \Bbbk$, and therefore $S$ can be expressed as $$S=\left\{\sum_{i}^{}\alpha_ix_i+\gamma \mid \alpha_i,\,\gamma\in \Bbbk,\,x_i\in Q\right\}.$$

In Section~\ref{S5}, we investigate units in extended quandle rings $S$ where the quandle $Q$ is a trivial quandle and the Joyce quandle. We find necessary and sufficient conditions for the extended quandle rings to have units. Moreover, we investigate nilpotent elements in the quandle ring of the trivial quandle. An element $a$ in a ring $R$ is called a nilpotent element if there exists a positive integer $n$ such that $a^n=0$. The smallest such positive integer is called the \textit{index} of the nilpotency. An element $r$ in a ring $R$ is called nil clean if there is an idempotent $u\in R$ and a nilpotent $b\in R$ such that $r=u+b$. A ring is called nil clean if every one of its elements is nil clean. We use our results on nilpotent elements to show that the quandle ring of the trivial quandle is not nil clean, and then show that the extended quandle ring is also not nil clean.

A nonzero element $a$ in a ring $R$ is said to be a left zero-divisor if there exists a nonzero element $b\in R$ such that $ab=0$ in $R$. Right zero-divisors are defined similarly. In the commutative setting, we can drop the adjectives ``left'' and ``right'' and just speak of zero-divors, but for non-commutative rings, a left zero-divisor need not be a right zero-divisor. Zero-divisor graphs of rings have been well-studied in the literature. We refer the reader to the book \cite{AABC-2021}, which is devoted to the interplay between rings and graphs, for an in-depth study of the topic. Even though many authors have studied zero-divisor graphs, most of those studies have considered associative rings. Motivated by the fascinating results on zero-divisor graphs of associative rings, in Section~\ref{S6}, we study zero-divsor graphs of extended quandle rings, which are in general non-associative rings. To the best of our knowledge, zero divisor graphs of quandle rings have not appeared in the literature. We define the zero-divisor graph of an extended quandle ring as follows:

\begin{defn}
Let $Q$ be a quandle and $R[Q]$ the quandle ring, where $R$ is the ground ring. Let $S:=R[Q]\oplus Re$ be the extended quandle ring, where $e(\notin Q)$ is a symbol that satisfies $e(\sum_{i}^{}\alpha_ix_i)=\sum_{i}^{}\alpha_ix_i=(\sum_{i}^{}\alpha_ix_i)e$. Let $Z(S)$ be the set of zero-divisors of $S$. The zero-divisor graph of $S$, denoted by $\Gamma(S)$, is the graph with vertex set $Z(S)$, and two distinct vertices $x$ and $y$ are adjacent if $xy=0$. 
\end{defn}

In the definition above, we can replace $S$ and $Z(S)$ by $R[Q]$ and $Z(R[Q])$, respectively, and define the zero-divisor graph of the quandle ring $R[Q]$ similarly. The zero-divisor graph of $R[Q]$ is clearly a subgraph of the zero-divisor graph of $S$. 

We point out to the reader that when $Q$ is not a commutative quandle, the zero-divisor graph of the extended quandle ring is a directed graph whereas when $\Bbbk$ is a commutative ring and $Q$ is a commutative quandle, the zero-divisor graph of the extended quandle ring is an undirected graph. 

In Section~\ref{S6}, we study the zero-divisor graphs of extended quandle rings $S$, where the quandle is a trivial quandle and the ground ring is a finite field. We also investigate zero-divisor graphs of the extended quandle ring of $\Bbbk[Q]$, where $Q$ is the Joyce quandle and $\Bbbk$ is an integral domain. Among our results on in-degrees and out-degrees of vertices, we observed a remarkable mirror symmetry. Precisely, whenever there exists a vertex  $x$ with in-degree $i$ and out-degree $o$, there also exists a vertex with in-degree $o$ and out-degree $i$, and the sum of these two corresponding elements in the quandle ring is not a zero divisor. In Section~\ref{S6}, we give an explanation to this mirror symmetry. We conclude the section with the zero-divisor graph of the extended quandle ring of the dihedral quandle.

A ring $R$ is prime if the product of any two nonzero ideals of $R$ is nonzero. A ring $R$ is semi-prime if the only ideal of $R$ which squares to zero is the zero ideal. Note that if a ring is prime, then it is semi-prime, but the converse is not necessarily true. In Section~\ref{S7}, we find a sufficient condition on the order of a quandle for the quandle ring $\mathbb{Z}_p[Q]$, where $p$ is a prime number, to be not semi-prime, and therefore not prime. We also present a generalization. 

Let $q$ be a prime power and $\mathbb{F}_q$ be the finite field with $q$ elements. In the area of finite fields, researchers use local permutation polynomials to construct latin squares. In fact, there is a one-to-one correspondence between the local permutation polynomials over $\mathbb{F}_q$ and latin squares of order $q$. Let $n\geq 1$ be an integer such that $2n+1$ is prime. A commutative quandle $Q$ of order $2n+1$ is given by the operation $x\ast y=(n+1)(x+y)$. Motivated by local permutation polynomails and the fact that latin quandles are latin squares, in Section~\ref{S7}, we find a bivariate polynomial $f(x,y)$ with coefficients in the quandle ring $\mathbb{Z}_{2n+1}[Q]$ that can be used to construct the commutative quandle of order $2n+1$.  

Throughout the paper, $\Bbbk$ always denotes an integral domain and $\mathbb{F}_q$ denotes the finite field of order $q,$ where $q$ is a prime power.
 
\section{Core quandles of dihedral groups}\label{S2}

Let $G$ be a group. The Core quandle of $G$, denoted by $\text{Core}(G)$, is the quandle whose operation is defined by $x\ast y=yx^{-1}y$ for all $x,y\in Q$. Let $n>2$ be an integer and $D_n$ be the group of symmetries of a regular $n$-gon whose presentation is given by 
$$\displaystyle{D_n=\langle a, b\,|\,a^n=1,\,b^2=1, b^{-1}ab=a^{-1}\rangle}.$$

In this section, we show that the core quandle of the dihedral group is not connected. In particular, we show that $\text{Core}(D_n)$ has two orbits when $n$ is odd, and has four orbits when $n$ is even. Moreover, we answer a question posed by Bardakov and Fedoseev in \cite{BF-2024}. 

\begin{thm}\label{thm 2.1}
Let $n> 2$ be an integer and $D_n$ be the dihedral group of order $2n$. Then    
\begin{center}
\begin{displaymath}
   \text{The number of orbits in Core}(D_n) = \left\{
     \begin{array}{lr}
       2 &  \text{if} \hspace{0.2cm} n\,\text{is odd},\cr
       4 &  \text{if} \hspace{0.2cm} n\,\text{is even}.
     \end{array}
   \right.
\end{displaymath}
\end{center}
\end{thm}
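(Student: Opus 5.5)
The plan is to compute the right multiplications $\beta_y(x)=yx^{-1}y$ explicitly on the normal forms of elements of $D_n$, and to reduce the orbit count to that of the dihedral quandle $R_n$. Every element of $D_n$ is uniquely $a^i$ or $a^ib$ with $i\in\mathbb{Z}_n$. We will use $ba^k=a^{-k}b$ and $(a^ib)^{-1}=a^ib$. First we note that each $\beta_y$ is an involution, since $y\,(yx^{-1}y)^{-1}\,y=x$. Hence $\text{Inn}(\text{Core}(D_n))$ is generated by the $\beta_y$ alone, and the orbit of $x$ consists of everything reachable from $x$ by repeatedly applying maps $\beta_y$.

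Next we compute the four cases of $\beta_y(x)$:
\begin{align*}
\beta_{a^j}(a^i)&=a^ja^{-i}a^j=a^{2j-i},\\
\beta_{a^jb}(a^i)&=a^jb\,a^{-i}\,a^jb=a^{j+i}a^{-j}=a^i,\\
\beta_{a^j}(a^ib)&=a^j\,a^ib\,a^j=a^{i+j}a^{-j}b=a^ib,\\
\beta_{a^jb}(a^ib)&=a^jb\,a^ib\,a^jb=a^{2j-i}b.
\end{align*}
So the rotation subset $\{a^i\}$ and the reflection subset $\{a^ib\}$ are each invariant under every $\beta_y$, and no orbit meets both. On each subset, the only nontrivial moves are $i\mapsto 2j-i$ with $j$ arbitrary. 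Via $a^i\mapsto i$ (respectively $a^ib\mapsto i$), each subset is acted on exactly as $R_n$ acts on itself. Therefore the orbits of $\text{Core}(D_n)$ are the orbits of two disjoint copies of $R_n$.

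It then remains to count the orbits of $R_n$ under the moves $i\mapsto 2j-i$. Composing two such moves gives $i\mapsto i+2(j'-j)$, and a single move preserves $i$ modulo $\gcd(2,n)$. So the orbit of $i$ is the coset $i+2\mathbb{Z}_n$, possibly together with its reflections, which lie in the same class mod $\gcd(2,n)$. If $n$ is odd, $2$ is invertible mod $n$, so $2\mathbb{Z}_n=\mathbb{Z}_n$ and $R_n$ is connected. If $n$ is even, every move preserves the parity of $i$: indeed $2j-i\equiv i \pmod 2$. The even and odd residues are each reached by translations by even steps, so $R_n$ has exactly two orbits. Doubling gives $2$ orbits for odd $n$ and $4$ for even $n$.

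The only delicate point is the careful handling of the relation $ba^k=a^{-k}b$ in the mixed cases. These cases give the key fact that no inner automorphism mixes rotations with reflections, so they must be checked carefully; everything else reduces to the elementary structure of $R_n$.
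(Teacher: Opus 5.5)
Your proof is correct and follows essentially the same route as the paper. Both compute the four products $yx^{-1}y$ for rotations and reflections, observe that each type is preserved while mixed products act trivially, and then analyze $i\mapsto 2j-i \pmod{n}$ according to the parity of $n$. Your packaging is a cleaner, more explicit version of the paper's row-by-row multiplication-table argument, particularly in the even case: you note that each $\beta_y$ is an involution, identify each type with $R_n$, and use that the orbit contains $i+2\mathbb{Z}_n$ and preserves the class of $i$ modulo $\gcd(2,n)$.
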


\begin{proof}
Let $x,y\in D_n$. Then we can write $x$ and $y$ as $x=a^{i_1}b^{j_1}$ and $y=a^{i_2}b^{j_2}$. We divide the proof into two cases: $n$ is odd and $n$ is even. \vskip 0.1in 

\noindent\textbf{Case 1.} $n$ is odd. \vskip 0.1in 

\noindent\textbf{Subcase 1.1.} First we consider the left multiplication by rotations. Let $x=a^i$ and $y=a^jb^k$, where $k\in \{0,1\}$. Then 
$$x\ast y=yx^{-1}y=a^jb^k(a^i)^{-1}a^jb^k =a^jb^ka^{j-i}b^k.$$

When $k=0$, we have $x\ast y=a^{2j-i\pmod{n}}$. Since $n$ is odd, for a fixed $i$, $2j-i$ is a permutation. This means when both $x$ and $y$ are rotations, $x\ast y$ is also a rotation, and therefore, in every row corresponds to a rotation, the right multiplication by a rotation is a permutation. 

When $k=1$, we have $x\ast y=a^i=x$. This means when $x$ is a rotation and $y$ is a reflection, $x\ast y$ gives the rotation $x$. 

We have thus far shown that the multiplication by rotations from the left yields one orbit. Next we consider the multiplication by reflections from the left. 

\noindent\textbf{Subcase 1.2.} Let $x=a^ib$ and $y=a^jb^k$, where $k\in \{0,1\}$. A similar computation yields that 
$$x\ast y = a^jb^{k-1}a^{j-i}b^k.$$

When $k=0$, we have $x\ast y=a^ib=x$. This implies that when $x$ is a reflection and $y$ is a rotation, $x\ast y$ is the reflection $x$. 

When $k=1$, we have $x\ast y=a^{2j-i\pmod{n}}\,b$. Since, for a fixed $i$, $2j-i$ is a permutation, when both $x$ and $y$ are reflections, $x\ast y$ is also a reflection. Therefore, in every row corresponds to a reflection, the right multiplication by a reflection is a permutation. This means the multiplication by reflections from the left yields one orbit. 

Thus we have shown that $\text{Core}(D_n)$ has two orbits when $n$ is odd. Now we consider the case $n$ is even. \vskip 0.1in 

\noindent\textbf{Case 2.} $n$ is even. \vskip 0.1in 

\noindent\textbf{Subcase 2.1.} First, let $x=a^i$ and $y=a^jb^k$, where $k\in \{0,1\}$. 

When $k=1$, we have $x\ast y=a^i=x$. This means in every row corresponds to a rotation, multiplication by a reflection from the right gives the roation $x$. 

When $k=0$, we have $x\ast y=a^{2j-i\pmod{n}}$. In this case, both $x$ and $y$ are rotations. That is, $x\ast y=a^{2j-i\pmod{n}}$. Since $n$ is even, for a fixed $i$, $2j-i$ is not a permutation modulo $n$. Fix $i$ and set $2j_1-i\equiv 2j_2-i\pmod{n}$. Then we have $j_1\equiv j_2\pmod{n/2}$, which says that in every row corresponds to a rotation, multiplication by a rotation from the right is not a permutation rather the resulting rotations repeat every $n/2$ times. 

Now we show that no two consecutive rows correspond to rotations share the same elements. Since $x\ast y=a^i=x$ when $k=1$, we only need to consider the case $k=0$: $x\ast y=a^{2j-i\pmod{n}}$. Since columns of a quandle are permutations, assume that the rotation in the $j$th column in the $i$th row appears in the $k$th column in the $(i+1)$st row. That is, we have 
$$2j-i\equiv 2k-(i+1)\pmod{n}.$$
However, this leads to $2(j-k)+1\equiv 0\pmod{n}$, which is a contradiction because $n$ is even.

Now we show that every other row corresponds to a rotation has the same elements. Since $x\ast y=a^i=x$ when $k=1$, we again only need to consider the case $k=0$: $x\ast y=a^{2j-i\pmod{n}}$. 

The question is if there is a column $k$ in $(i+2)$nd row that contains the element (rotation) in row $i$ and column $j$, $a^{2j-i}$. In other words, is there a $k$ such that 
$$2j-i\equiv 2k-(i+2)\pmod{n}?$$
Solving the above congruence gives us $k\equiv j+1\pmod{n/2}$, which means, in fact, the rotation in the $j$th column in $i$th row (corresponds to a rotation) also appears in the right next column in the $(i+1)$st row (corresponds to a rotation) or in the first column if the rotation is in the $n$th column in the $i$th row (corresponds to a rotation). \vskip 0.1in 

\noindent\textbf{Subcase 2.2.} Let $x=a^ib$ and $y=a^jb^k$, where $k\in \{0,1\}$.\vskip 0.1in 

When $k=0$, we have $x\ast y=a^ib=x$.  

When $k=1$, we have $x\ast y=a^{2j-i\pmod{n}}\,b$. Since $n$ is even, for a fixed $i$, $2j-i$ is not a permutation modulo $n$.

The rest of the proof is similar to that of Subcase 2.1. 

\end{proof}

The following proposition answers Question 4.3 of \cite{BF-2024}. 
\begin{prop}\label{prop 2.2}
    Let $D_n$ be the dihedral group of order $2n,$ where $n>2$. Then $rk(D_n)+2=rk(\text{Core}(D_n)).$

\end{prop}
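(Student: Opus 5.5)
The plan is to compute both ranks explicitly. Since $D_n$ is generated by $a$ and $b$ and is non-cyclic for $n>2$, we have $rk(D_n)=2$, so the claim reduces to showing $rk(\text{Core}(D_n))=4$. Write $A=\{a^i\}$ for the rotations and $B=\{a^ib\}$ for the reflections. The computations in the proof of Theorem~\ref{thm 2.1} give the full multiplication table:
\[
a^i\ast a^j=a^{2j-i},\qquad a^i\ast a^jb=a^i,\qquad a^ib\ast a^j=a^ib,\qquad a^ib\ast a^jb=a^{2j-i}b .
\]
For the last one, use $(a^ib)^{-1}=a^ib$ and $ba^ib=a^{-i}$. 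Consequently $\text{Core}(D_n)$ is the disjoint union of two subquandles $A$ and $B$, each isomorphic to the dihedral quandle $R_n$ via $a^i\mapsto i$ and $a^ib\mapsto i$. Each acts trivially on the other.

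\textbf{Step 1 (decomposition of generated subquandles).} Take $S\subseteq \text{Core}(D_n)$. Every $\beta_y$ is an involution, since $(x\ast y)\ast y=x$, so the subquandle generated by $S$ is just the closure of $S$ under $\ast$. By the table, the closure splits as $\langle S\cap A\rangle\cup\langle S\cap B\rangle$, where each piece is generated inside its own copy of $R_n$. Products across $A$ and $B$ produce nothing new. Therefore $rk(\text{Core}(D_n))=2\,rk(R_n)$.

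\textbf{Step 2 ($rk(R_n)=2$ for all $n>2$).} For the upper bound, check that $\{0,1\}$ generates $R_n$. Inductively, $1\ast 0=-1$ and $0\ast 1=2$. More generally, $k\ast(k+1)=k+2$ and $k\ast(k-1)=k-2$, so all residues are reached; this works for both parities of $n$. For the lower bound, a single element $x$ generates only $\{x\}$, because $x\ast x=x$, and $n>2>1$.

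\textbf{Step 3 (conclusion).} Combining the two steps gives $rk(\text{Core}(D_n))=2\cdot 2=4=rk(D_n)+2$. A generating set in particular is $\{1,a,b,ab\}$.

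The main point needing care is Step 1. One must confirm that "generated subquandle" may use the inverses $\beta_y^{-1}$, and that these equal $\beta_y$ here. One must also confirm that no mixed product ever moves an element between $A$ and $B$. Both follow directly from the displayed table, so the table computation for the reflection–reflection case is the only real verification.
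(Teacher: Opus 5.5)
Your proof is correct, and it takes a different route from the paper.

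\textbf{The paper's route.} The paper proves the lower bound through the orbit decomposition of Theorem~\ref{thm 2.1}, splitting by the parity of $n$. For $n$ even there are four orbits, and every element of a generated subquandle lies in the orbit of some generator, so at least four generators are needed. For $n$ odd there are only two orbits, so the paper argues case by case. One rotation and one reflection generate a trivial $2$-element subquandle. Two elements of one orbit together with one element of the other cannot reach the rest of the second orbit.

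\textbf{Your route.} You bypass the orbit count entirely. The multiplication table shows that $\text{Core}(D_n)$ is the disjoint union of two copies of $R_n$ acting trivially on each other. Hence rank is additive, and $rk(\text{Core}(D_n))=2\,rk(R_n)=4$ uniformly in $n$.

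\textbf{What each approach buys.} Your argument gives a parity-free proof with an explicit generating set $\{1,a,b,ab\}$. It also makes explicit a point the paper leaves implicit: generators within an orbit must be chosen suitably. For $n=9$, the rotations $1$ and $a^3$ generate only $\{1,a^3,a^6\}$, so ``two elements from each orbit'' does not work for arbitrary choices. Your argument also applies verbatim whenever a quandle splits into subquandles acting trivially on one another, e.g.\ the core of $H\rtimes\mathbb{Z}_2$ with $H$ abelian and $\mathbb{Z}_2$ acting by inversion. The paper's route, in exchange, ties the rank to the orbit structure it has just computed, and for $n$ even the orbit count gives the lower bound with no further work.
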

\begin{proof}
    It is well known that $rk(D_n)=2$ for $n>1.$ We now determine the rank of $\text{Core} (D_n),$ considering the cases where $n$ is odd and $n$ is even seperately. Clearly, $rk(\text{Core}(D_n))\neq 1,$ since every element of a quandle is idempotent, and hence no quandle with more than one element can be generated by a single element.
    \vskip 0.1in 

\noindent\textbf{Case 1.} $n$ is odd. \vskip 0.1in 
It follows from Theorem~\ref{thm 2.1} that $\text{Core}(D_n)$ has two orbits. We denote them as $O_1$ and $O_2$. Since each orbit is a subquandle, any set of elements belonging to the same orbit can not generate $\text{Core}(D_n).$ 

If we choose one element from each orbit, then Subcase 1.1 ( the part $k=1$) and Subcase 1.2 ( the case $k=0$) show that the quandle generated by these two elements will be a trivial subquandle (of order $2$) of $\text{Core}(D_n)$. Therefore $rk(\text{Core}(D_n))\neq 2.$ 

Similarly, suppose we choose two elements $x,\,y\in O_1$ and one element $z\in O_2.$  Then, by Subcases 1.1 and 1.2, the subquandle generated by $\{x,\,y,\,z\}$ is $O_1\cup\{z\}.$ Hence, $\text{Core}(D_n)$ cannot be generated by three elements, and therefore $rk(\text{Core}(D_n))\neq 3.$

Finally, if we choose two elements from $O_1$ and two elements from $O_2$, then the resulting subquandle is the entire quandle $\text{Core}(D_n)=O_1\cup O_2$. Consequently, $rk(\text{Core}(D_n))=4=rk(D_n)+2.$ \vskip 0.1in 

\noindent\textbf{Case 2.} $n$ is even. \vskip 0.1in 
By Theorem~\ref{thm 2.1}, $\text{Core}(D_n)$ has four orbits, namely $O_1,\,O_2,\,O_3,$ and $O_4$. Arguing as in the previous case and carefully examining the quandle operation between elements belonging to different orbits (in particular, the orbits corresponding to rotations and those corresponding to reflections), any generating set consisting of fewer than four elements cannot generate the entire quandle, since at least one orbit will necessarily be omitted. On the other hand, if we choose one element from each orbit, then Theorem~\ref{thm 2.1} (Case~2) implies that the subquandle generated by these four elements is the entire quandle. Consequently, $rk(\text{Core}(D_n)=4=rk(D_n)+2.$

\end{proof}
\begin{rmk}
    Note that Theorem~\ref{thm 2.1} and Proposition~\ref{prop 2.2} do not hold for $n=1,$ as $ rk(\text{Core}(D_1))=rk(\text{Core}(\textbf{Z}_2))=2=rk(D_1)+1$ \cite[Question 4.2]{BES-2026}. 
\end{rmk}

\begin{rmk}
    Note that Theorem~\ref{thm 2.1} and Proposition~\ref{prop 2.2} hold for $n=2$, in which case the dihedral group is abelian. 
\end{rmk}

%\section{More Results on Core Quandles}\label{S4}

%A natural question to ask is whether every quandle is a core quandle over some group $G$. The answer to this question is negative. Because there is exactly one group of order $p$, where $p$ is prime, and it is isomorphic to the cyclic group $(Z_p,+)$. $\text{Core}(Z_p)=$ dihedral quandle of order $p$, but there are $p-2$ connected quandles of order $p$. 

\section{Idempotents in quandle rings $\Bbbk[\text{Core}(G)]$}\label{S3}

There has been considerable interest in determining the idempotents of quandle rings, particularly in understanding whether a given quandle ring admits only trivial idempotents or also non-trivial idempotents. In this section, we address a few open questions related to this topic and determine the idempotents of quandle rings $\Bbbk[\text{Core}(D_3)].$ The following example provides a negative answer to Question~\ref{ques 1.7}.
\begin{exmp}\label{exmp 5.1}
    Let $\overline{\mathbb{F}_p}$ be the algebraic closure of the finite field $\mathbb{F}_p,$ where $p$ is a prime. Then $\overline{\mathbb{F}_p}$ is an infinite additive abelian group. Consider the quandle $Q=\text{Core}(\overline{\mathbb{F}_p}).$ Now, $\mathbb{F}_p=\{1,\,2,\,\dots,\,p\}$ is a subset of $Q,$ which is closed under the quandle operation $a*b=2b-a\pmod{p}.$ Since $Q$ is a kei, therefore $Y=\text{Core}(\mathbb{F}_p)$ is a subquandle (of $Q$) of order $p.$ We take $\Bbbk$ to be any integral domain with unity where $p$ is invertible, for example $\mathbb{R}.$ Then by \cite[Proposition 12.11]{BES2-2026}, it follows that $\mathbb{R}[\text{Core}(\overline{\mathbb{F}_p})]$ has a non- trivial idempotent.
    \end{exmp}
    \begin{rmk}
       In Example~\ref{exmp 5.1}, $\overline{\mathbb{F}_p}$ can be replaced by any infinite field of characteristic $p,$ for example the rational function field, $\mathbb{F}_p(t).$
    \end{rmk}

The following proposition, proved in \cite{BES2-2026}, provides a useful criterion for determining whether a quandle ring admits non-trivial idempotents based on the existence of trivial subquandles.
\begin{prop}\cite[Proposition 12.13]{BES2-2026}\label{prop 5.3}
    Let $Q$ be a quandle and $\Bbbk$ be an integral domain with unity. If $Q$ contains a trivial subquandle of order greater than $1,$ then the quandle ring $\Bbbk[Q]$ has non-trivial idempotents.
\end{prop}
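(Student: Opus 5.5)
Suppose $Q$ contains a trivial subquandle $T$ with $|T|\ge 2$, and pick distinct $x,y\in T$. The plan is to write down an explicit non-trivial idempotent supported on $\{x,y\}$. Since $T$ is trivial, $x\ast y=x$, $y\ast x=y$, and also $x\ast x=x$, $y\ast y=y$, so $e_a e_b=e_a$ for all $a,b\in\{x,y\}$. Thus the $\Bbbk$-span of $e_x,e_y$ is a subring in which every product is computed purely from the left factor.

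Next compute the square of a general element $u=\alpha e_x+\beta e_y$ with $\alpha,\beta\in\Bbbk$. Expanding with the rule above gives
\begin{equation*}
u^2=\alpha^2 e_x+\alpha\beta e_x+\beta\alpha e_y+\beta^2 e_y=(\alpha+\beta)\alpha\, e_x+(\alpha+\beta)\beta\, e_y .
\end{equation*}
Hence $u^2=(\alpha+\beta)u$. So $u$ is idempotent whenever $\alpha+\beta=1$, i.e.\ whenever $\epsilon(u)=1$. (Commutativity of $\Bbbk$ is used when collecting $\beta\alpha$ as $(\alpha+\beta)\beta$; an integral domain is commutative, so this is fine.)

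Now choose $\alpha=2$ and $\beta=-1$, so that $u=2e_x-e_y$. This $u$ satisfies $u^2=u$ and $u\neq 0$. It remains to check that $u$ is not of the form $e_z$ for any $z\in Q$. The elements $e_z$ form a $\Bbbk$-basis of $\Bbbk[Q]$, and $u$ has two nonzero coefficients because $-1\neq 0$ in any ring with $1\neq 0$. Therefore $u\neq e_z$ for every $z$, and $u$ is a non-trivial idempotent.

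There is no real obstacle. The only point needing care is the non-triviality check: one might be tempted to use $u=\alpha e_x+(1-\alpha)e_y$ with arbitrary $\alpha$, but for $\alpha\in\{0,1\}$ this collapses to $e_y$ or $e_x$. Choosing $\alpha=2$ avoids this regardless of the characteristic of $\Bbbk$, since even when $2=0$ the element $u=-e_y$ differs from every basis element only if $-1\neq 1$. In characteristic $2$, then, $\alpha=2$ does not work, and one should instead take $\alpha=1$ and rely on a different choice. To be safe, I would use $u=e_x+(-1)e_y+e_y$? No: the cleanest uniform choice is $u=2e_x-e_y$ when $\operatorname{char}\Bbbk\neq 2$. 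When $\operatorname{char}\Bbbk=2$, I would instead take $u=\alpha e_x+(1+\alpha)e_y$ with $\alpha\notin\{0,1\}$ if $\Bbbk\neq\mathbb{F}_2$.

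If $\Bbbk=\mathbb{F}_2$, I would use a third element of $T$ if $|T|\ge 3$, namely $u=e_x+e_y+e_z$, whose coefficient sum is $1$. For $\Bbbk=\mathbb{F}_2$ with $|T|=2$, I expect this to be the genuinely delicate case, and would check it against the cited source's conventions.
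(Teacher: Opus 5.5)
Your core computation is correct and is the natural argument; the paper itself gives no proof and only cites the source. For any $u$ supported on a trivial subquandle $T$ one has $u^2=\epsilon(u)\,u$. So $u=\alpha e_x+(1-\alpha)e_y$ is an idempotent, and it is non-trivial exactly when $\alpha\notin\{0,1\}$. Such an $\alpha$ exists precisely when $|\Bbbk|\ge 3$, i.e.\ when $\Bbbk\neq\mathbb{F}_2$, so your split by characteristic is unnecessary. Your first non-triviality check (``because $-1\neq 0$'') is wrong as written, since the coefficient $2$ of $e_x$ vanishes in characteristic $2$. For $\Bbbk=\mathbb{F}_2$ and $|T|\ge 3$, your element $e_x+e_y+e_z$ works because $\epsilon=3=1$. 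In a final write-up, remove the self-correcting asides such as ``$u=e_x+(-1)e_y+e_y$? No''.

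\textbf{The gap.} The case $\Bbbk=\mathbb{F}_2$, $|T|=2$ is left open, and it cannot be filled, because the statement as printed is false there. Take $Q=T=\{x,y\}$, the trivial quandle of order $2$, and $\Bbbk=\mathbb{F}_2$. Then $\Bbbk[Q]=\{0,e_x,e_y,e_x+e_y\}$ and $(e_x+e_y)^2=2(e_x+e_y)=0$. So the only nonzero idempotents are the trivial ones, $e_x$ and $e_y$. This matches Section 6 of the paper, where $x_0+x_1$ carries a loop in the zero-divisor graph of the trivial quandle of order $2$.

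For a larger $Q$ over $\mathbb{F}_2$ whose trivial subquandles all have order $2$, non-trivial idempotents may still exist for other reasons. An example is $e_0+e_1+e_2$ in $\mathbb{F}_2[\mathrm{Core}(D_3)]$, which comes from an odd-order subquandle. But no argument confined to the span of $e_x,e_y$ can produce one. So do not defer to ``the cited source's conventions''. State the extra hypothesis your proof actually uses: $\Bbbk\neq\mathbb{F}_2$, or $|T|\ge 3$ when $\Bbbk=\mathbb{F}_2$. Under that hypothesis your argument is complete.
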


A natural question is whether the converse of the above proposition is true. The following counterexample shows that this is not the case.
\begin{exmp}
We consider the quandle $Q=\text{Core}(G),$ where $G$ is a finite group of odd order. Then it is known that $Q$ is latin \cite[Corollary 5.5]{SB-2025}. Hence every orbit of an element in $Q$ must contain more than one element and is therefore a subquandle of order greater than $1,$ say $n>1.$ It follows from \cite[Proposition 12.11]{BES2-2026} that $\Bbbk[Q]$ has non-trivial idempotents, where $\Bbbk$ is an integral domain with unity and $n$ is a unit in $\Bbbk.$ 

On the other hand, since $Q$ is latin, it does not have any trivial subquandles of order greater than $1.$
\end{exmp}

As observed in the proof of Theorem~\ref{thm 2.1}, the quandle $\text{Core}(D_n),\,n\geq 2$ always contains a trivial subquandle of order $2,$ consisting of one rotation and one reflection. Therefore  by Proposition~\ref{prop 5.3}, $\Bbbk[\text{Core}(D_n)],\,n\geq 2$ always has non-trivial idempotents, where $\Bbbk$ is an integral domain with unity. This naturally leads to the problem of determining these idempotents. In the following, we address this question in the case $n=3.$  \vskip 0.1in 

\noindent

 Let $\,A=\{1,\,\cdots,\,n-1\},\,B=\{n,\,n+1,\,\cdots,\,2n-1\}$ and $e_0,\,e_1,\,\cdots,\,e_{2n-1}$ be elements of $D_n=\langle a, b\,|\,a^n=1,\,b^2=1, b^{-1}ab=a^{-1}\rangle.$ We express these elements in the following form:
 \begin{enumerate}
 \item 
     $e_0=a^0=1.$
     \item 
     If $i\in A,$ then $e_i=a^i.$  
     \item 
     If $i\in B,$ then $e_i=a^{i-n}b.$
 \end{enumerate}
Note that, for $i\in A,\,e_i^{-1}=e_{-i\pmod{n}}$ and for $i\in B,\,e_i^{-1}=e_i,\,e_i^2=1.$ We now examine how the quandle operation $a*b=ba^{-1}b$ acts on the elements of $D_n$. From the proof of Theorem~\ref{thm 2.1}, the following relations hold:
\begin{enumerate}
    \item 
    If $(i,\,j)\in(A,\,B)$ or $(i,\,j)\in(B,\,A),$ then $e_i*e_j=e_i.$
    \item 
    If $i\in A$ and $j\in A,$ then $e_i*e_j=e_{2j-i\pmod{n}}.$
    \item 
    If $i\in B$ and $j\in B,$ then $e_i*e_j=a^{2j-i\pmod{n}}b=e_k,$ where $k-n\equiv 2j-i\pmod{n}.$
\end{enumerate}

 Let $Q$ denotes the quandle $\text{Core}(D_n),\,n\geq 2$ and $E_i=e_i-e_0,\,i\in \{1,\,2,\,\cdots,\,2n-1\}$ be the basis elements of the augmentation ideal $\Delta_\Bbbk(Q)$ of the quandle ring $\Bbbk[Q].$ The following lemma describes the multiplication in the quandle ring among these basis elements.
\begin{lem}\label{lem 5.6}
   Assume that $E_0=0.$ Then the following equalities hold.
   \[
   E_iE_j=\begin{cases}
       E_{\overline{2j-i}}-E_{\overline{-i}}-E_{\overline{2j}},\,&~\
       \text{if}~i\in A,\,j\in A\\
       E_i-E_{\overline{-i}},\,& ~\text{if}~i\in A,\, j\in B\\
       -E_{\overline{2j}},\,&~\text{if}~i\in B,\,j\in A\\
       E_k-E_i,\,&~\text{if}~i\in B,\,j\in B,
       
   \end{cases}
   \]
   where $\overline{r}\equiv r\pmod{n}$ and $k=n+\overline{(2j-i)}.$
   
   In particular
   \[
   E_i^2=\begin{cases}
       E_i-E_{\overline{-i}}-E_{\overline{2i}},\,&~\text{if}~i\in A\\
       E_k-E_i,\,&~\text{if}~i\in B,
   \end{cases}
   \]
   where $k=n+\overline{i}$.
   \end{lem}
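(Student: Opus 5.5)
The plan is a direct expansion in the quandle ring using bilinearity, combined with the multiplication rules (1)--(3) listed just before the lemma, which come from the proof of Theorem~\ref{thm 2.1}. Since $E_i=e_i-e_0$, we have
\[
E_iE_j=e_i*e_j-e_i*e_0-e_0*e_j+e_0*e_0 ,
\]
and $e_0*e_0=e_0$. The element $e_0=1$ is a rotation, with index $0$ outside $A$. The first thing I would check is that $e_0$ obeys the same rules as the elements indexed by $A$, provided indices are read modulo $n$: rotations multiply by $e_x*e_y=e_{\overline{2y-x}}$, and a rotation and a reflection act trivially on each other. This justifies the convention $E_0=0$ whenever a reduced index $\overline{r}$ equals $0$.

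I would then treat the four cases for $(i,j)$ separately, computing the three products $e_i*e_j$, $e_i*e_0$ and $e_0*e_j$ and rewriting each $e_m$ as $E_m+e_0$.
\begin{itemize}
\item For $i,j\in A$ (all rotations): $e_i*e_j=e_{\overline{2j-i}}$, $e_i*e_0=e_{\overline{-i}}$ and $e_0*e_j=e_{\overline{2j}}$. Rewriting gives $E_{\overline{2j-i}}-E_{\overline{-i}}-E_{\overline{2j}}$, since the four copies of $e_0$ (signs $+,-,-,+$) cancel.
\item For $i\in A$, $j\in B$: $e_i*e_j=e_i$, $e_i*e_0=e_{\overline{-i}}$ and $e_0*e_j=e_0$. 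This yields $E_i-E_{\overline{-i}}$.
\item For $i\in B$, $j\in A$: $e_i*e_j=e_i$ and $e_i*e_0=e_i$ cancel, while $e_0*e_j=e_{\overline{2j}}$. This leaves $-e_{\overline{2j}}+e_0=-E_{\overline{2j}}$.
\item For $i,j\in B$: $e_i*e_0=e_i$ and $e_0*e_j=e_0$. By rule (3), $e_i*e_j=a^{2(j-n)-(i-n)}b$, and since $2(j-n)-(i-n)\equiv 2j-i \pmod n$ this is $e_k$ with $k=n+\overline{2j-i}$. The result is $E_k-E_i$.
\end{itemize}

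The ``in particular'' statement follows by setting $j=i$. For $i\in A$ we get $\overline{2i-i}=i$, which gives $E_i-E_{\overline{-i}}-E_{\overline{2i}}$. For $i\in B$ we get $k=n+\overline{i}$, matching the stated form. (Here $\overline{i}=i-n$, so in fact $k=i$ and $E_i^2=0$, consistent with $e_i*e_i=e_i$.)

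The only real obstacle is bookkeeping. One must make sure that reflections are indexed by $i-n$ when applying $2j-i$, so that the shift by $n$ cancels modulo $n$. One must also check the degenerate cases where $\overline{-i}$, $\overline{2j}$ or $\overline{2j-i}$ equals $0$, where the convention $E_0=0$ is exactly what makes the formulas remain valid. When $n$ is even, $\overline{2j}$ can be $0$ for $j=n/2$, so this check is not vacuous.
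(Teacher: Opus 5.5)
Your proof is correct and is essentially the paper's argument. The paper also expands $E_iE_j=e_i*e_j-e_i*e_0-e_0*e_j+e_0$, obtains $e_i*e_0=e_i^{-1}$ and $e_0*e_j=e_j^2$ directly from $x*y=yx^{-1}y$ (equivalent to your extension of rules (1)--(3) to index $0$), and runs the same four-case analysis; your side remark that $k=i$, hence $E_i^2=0$ for $i\in B$, is also right and matches the paper's later values $E_3^2=E_4^2=E_5^2=0$ for $n=3$.
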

   \begin{proof}
      We have
    \[
    \begin{split}
        E_iE_j&=(e_i-e_0)(e_j-e_0)\cr
        &=e_i*e_j-e_i*e_0-e_0*e_j+e_0*e_0\cr
        &=e_i*e_j-e_i^{-1}-e_j^2+e_0.
    \end{split}
    \]
    \textbf{Case 1.} $i\in A.$
    \[
    \begin{split}
        E_iE_j&=a^i*e_j-e_{\overline{-i}}-e_j^2+e_0\cr
        &=\begin{cases}
          a^i*a^j-a^{-i\pmod{n}} -(a^j)^2+e_0,\,&~\text{if}~j\in A\\
          a^i-a^{-i\pmod{n}}-e_0+e_0,\,&~\text{if}~j\in B
        \end{cases}\cr
        &=\begin{cases}
            a^{2j-i\pmod{n}}-a^{-i\pmod{n}}-a^{2j\pmod{n}}+e_0,\,&~\text{if}~j\in A\\
            a^i-a^{-i\pmod{n}} ,\,&~\text{if}~j\in B
        \end{cases}\cr
        &=\begin{cases}
            e_{2j-i\pmod{n}}-e_0-(e_{-i\pmod{n}}-e_0)-(e_{2j\pmod{n}}-e_0),\,&~\text{if}~j\in A\\
            e_i-e_0-(e_{-i\pmod{n}}-e_0) ,\,&~\text{if}~j\in B
        \end{cases}\cr
        &=\begin{cases}
       E_{\overline{2j-i}}-E_{\overline{-i}}-E_{\overline{2j}},\,&~\
       \text{if}~j\in A\\
       E_i-E_{\overline{-i}},\,& ~\text{if}~j\in B.
       \end{cases}
    \end{split}
    \]
    In particular $E_i^2=E_i-E_{\overline{-i}}-E_{\overline{2i}}.$\\
    \textbf{Case 2.} $i\in B.$
     \[
    \begin{split}
        E_iE_j&=\begin{cases}
          e_i-e_i-a^{2j\pmod{n}}+e_0,\,&~\text{if}~j\in A\\
          a^{2j-i\pmod{n}}b-e_i-e_0+e_0,\,&~\text{if}~j\in B
        \end{cases}\cr
        &=\begin{cases}
        -e_{2j\pmod{n}}+e_0,\,&~\text{if}~j\in A\\
             a^{2j-i\pmod{n}}b-e_0-E_i,\,&~\text{if}~j\in B
        \end{cases}\cr
        &=\begin{cases}
        -E_{\overline{2j}},\,&~\text{if}~j\in A\\
             E_k-E_i,\,&~\text{if}~j\in B,
        \end{cases}\cr
    \end{split}
    \]
    where $k=n+\overline{(2j-i)}.$ In particular $E_i^2= E_k-E_i,$ where $k=n+\overline{i}.$
       \end{proof}

Let $Q=\text{Core}(D_3).$ Our computation of the idempotents of $\Bbbk[\text{Core}(D_3)]$ follows the approach developed in \cite{BE-2026}. We write $\Bbbk[\text{Core}(D_3)]=\Bbbk \cdot e_0 + \Delta_{\Bbbk}(\text{Core}(D_3)).$

Let $u\in \Bbbk[\text{Core}(D_3)]$ be an idempotent, and we write $u=\alpha \cdot e_0+\delta$, where $\alpha \in \Bbbk, e_0\in \text{Core}(D_3)$, and $\delta \in \Delta_{\Bbbk}(\text{Core}(D_3)).$ Since $u^2=u$, we obtain

\[
\begin{split}
u^2&=(\alpha \cdot e_0+\delta)^2\cr
&=\alpha^2\cdot e_0+\delta^2 +\alpha (e_0\delta +\delta e_0).
\end{split}
\]

Recall that the ideal $\Delta_{\Bbbk}(\text{Core}(D_3))$ is a two-sided ideal, and therefore $\alpha (e_0\delta +\delta e_0)\in\Delta_{\Bbbk}(\text{Core}(D_3))$. Thus $u\in \Bbbk[\text{Core}(D_3)]$ is an idempotent if and only if $\alpha^2=\alpha$ and $\delta^2 +\alpha (e_0\delta +\delta e_0)=\delta$. From the first equation we have $\alpha = 0$ or $\alpha = 1$. When $\alpha = 0$, $u=\delta$, and that implies $\epsilon(u)=0$. When $\alpha = 1$, $u=e_0+\delta$, and that implies $\epsilon(u)=1$. Thus, to find an idempotent $u$ such that $\epsilon(u)=0$, we must look for an element $u$ in $\Delta_{\Bbbk}(\text{Core}(D_3))$. The following computation follows from Lemma ~\ref{lem 5.6}.
\[
\begin{array}{ll}
     E_1^2=E_1-E_{\overline{-1}}-E_{\overline{2}}=E_1-2E_2,\,E_2^2=E_2-2E_1, & E_3^2=E_4^2=E_5^2=0,\\
    E_1E_2=E_0-E_2-E_1=-E_1-E_2, & E_1E_3=E_1E_4=E_1E_5=E_1-E_2,\\ E_2E_1=E_0-E_{\overline{-2}}-E_{\overline{2}}=-E_1-E_2, &E_2E_3=E_2E_4=E_2E_5=E_2-E_1,\\
    E_3E_1=E_4E_1=E_5E_1=-E_2, & E_3E_2=E_4E_2=E_5E_2=-E_1,\\
    E_3E_4=-E_3+E_5, & E_3E_5=--E_3+E_4,\\
    E_4E_3=-E_4+E_5, & E_4E_5=-E_4+E_3,\\
    E_5E_3=-E_5+E_4, & E_5E_4=-E_5+E_3.
\end{array}
\]
\iffalse
$$E_a^2=-2E_{a^2}+E_a, \,E_{a^2}^2=-2E_a+E_{a^2}$$
$$E_{b}^2=0,\, E_{ab}^2=0, \,E_{a^2b}^2=0$$
$$e_1E_a=E_{a^2},e_1E_{a^2}=E_{a}, e_1E_b=0,e_1E_{ab}=0,e_1E_{a^2b}=0$$
$$E_ae_1=E_{a^2},E_{a^2}e_1=E_{a}, E_be_1=b,E_{ab}e_1=E_{ab},E_{a^2b}e_1=E_{a^2b}$$
$$E_aE_{a^2}=-E_a-E_{a^2}, E_aE_{b}=E_aE_{ab}=E_aE_{a^2b}=E_a-E_{a^2}$$
$$E_{a^2}E_a=-E_a-E_{a^2}, E_{b}E_a= E_{ab}E_a= E_{a^2b}E_a=-E_{a^2}$$
$$E_{a^2}E_b=E_{a^2}E_{ab}=E_{a^2}E_{a^2b}=-E_a+E_{a^2}$$
$$E_bE_{a^2}=E_{ab}E_{a^2}=E_{a^2b}E_{a^2}=-E_a$$
$$E_bE_{ab}=-E_{b}+E_{a^2b},\,E_bE_{a^2b}=-E_{b}+E_{ab},\,E_{ab}E_{b}=-E_{ab}+E_{a^2b}$$
$$E_{a^2b}E_b=E_{ab}-E_{a^2b}, E_{ab}E_{a^2b}=E_{b}-E_{ab}, E_{a^2b}E_{ab}=E_{b}-E_{a^2b}$$
\fi

We first look for idempotents $u\in \Bbbk[\text{Core}(D_3)]$ that satisfy $\epsilon(u)=1$. 

\textbf{Case 1.} $\epsilon(u)=1$. Let $u=e_0+\alpha_1E_1+\alpha_{2}E_{2}+\alpha_3E_3+\alpha_{4}E_{4}+\alpha_{5}E_{5}$. Then by comparing the coefficients on both sides of $u^2=u$ gives us the following result. 

\begin{thm}
An element $u\in \Bbbk[\text{Core}(D_3)]$ is a nontrivial idempotent with $\epsilon(u)=1$ if and only if the following system of equations 
$$\alpha_1^2-2\alpha_2^2+2\alpha_2-2\alpha_1\alpha_2+\alpha_1\alpha_3+\alpha_1\alpha_4+\alpha_1\alpha_5-2\alpha_2\alpha_3-2\alpha_2\alpha_4-2\alpha_2\alpha_5-\alpha_1=0$$
$$-2\alpha_1^2+\alpha_2^2+2\alpha_1-2\alpha_1\alpha_2-2\alpha_1\alpha_3-2\alpha_1\alpha_4-2\alpha_1\alpha_5+\alpha_2\alpha_3+\alpha_2\alpha_4+\alpha_2\alpha_5-\alpha_2=0$$
$$-\alpha_3\alpha_4-\alpha_3\alpha_5+2\alpha_4\alpha_5=0$$
$$2\alpha_3\alpha_5-\alpha_3\alpha_4-\alpha_4\alpha_5=0$$
$$2\alpha_3\alpha_4-\alpha_3\alpha_5-\alpha_4\alpha_5=0$$
has nonzero solutions in $\Bbbk$ in which more than one component is nonzero. 
\end{thm}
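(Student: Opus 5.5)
The plan is to reduce $u^2=u$ to a coefficient comparison in the free $\Bbbk$-module $\Delta_{\Bbbk}(\text{Core}(D_3))$, which has basis $E_1,\dots,E_5$. Write $u=e_0+\delta$ with $\delta=\sum_{i=1}^5\alpha_iE_i$. As in the discussion preceding the theorem, $u$ is idempotent if and only if
\[
\delta^2+e_0\delta+\delta e_0=\delta .
\]
All three terms on the left lie in $\Delta_{\Bbbk}(\text{Core}(D_3))$. So the idempotency of $u$ is equivalent to five scalar equations, one for each coefficient of $E_1,\dots,E_5$.

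First I compute the linear terms $e_0\delta$ and $\delta e_0$, which the multiplication table after Lemma~\ref{lem 5.6} does not list.
\begin{itemize}
\item From $e_0*e_j=e_j^2$ we get $e_0E_j=e_{\overline{2j}}-e_0$. This gives $e_0E_1=E_2$ and $e_0E_2=E_1$, and $e_0E_j=0$ for $j\in B$.
\item From $e_j*e_0=e_j^{-1}$ we get $E_je_0=e_j^{-1}-e_0$. This gives $E_1e_0=E_2$ and $E_2e_0=E_1$, and $E_je_0=E_j$ for $j\in B$.
\end{itemize}
Hence
\[
e_0\delta+\delta e_0-\delta=(2\alpha_2-\alpha_1)E_1+(2\alpha_1-\alpha_2)E_2 ,
\]
and the coefficients of $E_3,E_4,E_5$ cancel.

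Next I expand $\delta^2=\sum_{i,j}\alpha_i\alpha_jE_iE_j$ using the table. I collect the coefficient of each $E_k$ and add the linear contribution above.

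For $E_3$ the contributions are:
\begin{itemize}
\item $-\alpha_3\alpha_4$ from $E_3E_4$;
\item $-\alpha_3\alpha_5$ from $E_3E_5$;
\item $+\alpha_4\alpha_5$ from each of $E_4E_5$ and $E_5E_4$.
\end{itemize}
This yields the third displayed equation. The $E_4$ and $E_5$ equations follow symmetrically.

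The $E_1$ and $E_2$ equations collect:
\begin{itemize}
\item $\alpha_1^2(E_1-2E_2)$ and $\alpha_2^2(E_2-2E_1)$;
\item $-2\alpha_1\alpha_2(E_1+E_2)$;
\item the mixed terms $\alpha_1\alpha_j(E_1-E_2)$, $\alpha_2\alpha_j(E_2-E_1)$, $-\alpha_j\alpha_1E_2$ and $-\alpha_j\alpha_2E_1$ for $j\in B$.
\end{itemize}
The main risk is bookkeeping: each mixed product must be counted once in each order. I would check the final expressions against the two displayed equations term by term.

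Finally I translate "nontrivial". The all-zero solution gives $u=e_0$. A solution with $\alpha_i=1$ and all other $\alpha_j=0$ gives $u=e_i$. So trivial idempotents correspond exactly to these solutions.

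The step I expect to be the real obstacle is showing that every idempotent solution with exactly one nonzero component is trivial. For $i\in A$ this works: setting $\alpha_1=a$ and all others zero, the first equation becomes $a^2-a=0$, so $a=1$ because $\Bbbk$ is a domain, and similarly for $\alpha_2$.

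For $i\in B$, however, all five equations vanish identically. Thus $u=(1-a)e_0+ae_3$ is idempotent for every $a\in\Bbbk$. This reflects the trivial subquandle $\{e_0,e_3\}$ of Proposition~\ref{prop 5.3}. So the final clause of the statement must be read as ``the solution is not of the form $\alpha_i=1$ with all other $\alpha_j=0$''. I would state that correction explicitly, and note that the equations themselves characterize all idempotents with $\epsilon(u)=1$.
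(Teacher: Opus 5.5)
Your approach is the paper's: write $u=e_0+\sum_{i=1}^5\alpha_iE_i$, reduce $u^2=u$ to $\delta^2+e_0\delta+\delta e_0=\delta$, and compare the coefficients of $E_1,\dots,E_5$ using the table following Lemma~\ref{lem 5.6}. Your linear part $(2\alpha_2-\alpha_1)E_1+(2\alpha_1-\alpha_2)E_2$, together with the quadratic contributions you list, reproduces the five displayed equations exactly.

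Your caveat is also correct, and the paper's one-line argument misses it. The vector $(0,0,a,0,0)$ solves the system for every $a\in\Bbbk$, and the same holds with $a$ in the fourth or fifth slot. This yields the nontrivial idempotent $(1-a)e_0+ae_3$ coming from the trivial subquandle $\{e_0,e_3\}$, for example $-e_0+2e_3$ over $\mathbb{Z}$. So the ``only if'' direction fails as stated for every $\Bbbk\neq\mathbb{F}_2$. The correct condition is that the solution be neither $0$ nor a standard basis vector.
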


\begin{exmp}
Take $\Bbbk=\mathbb{F}_2$. Then the system of equations becomes 
$$\alpha_1^2+\alpha_1\alpha_3+\alpha_1\alpha_4+\alpha_1\alpha_5-\alpha_1=0$$
$$\alpha_2^2+\alpha_2\alpha_3+\alpha_2\alpha_4+\alpha_2\alpha_5-\alpha_2=0$$
$$\alpha_3\alpha_4+\alpha_3\alpha_5=0$$
$$\alpha_3\alpha_4+\alpha_4\alpha_5=0$$
$$\alpha_3\alpha_5+\alpha_4\alpha_5=0$$

Solving this system in characteristic 2 gives us the solutions $(\alpha_1,\alpha_2,\alpha_3,\alpha_4,\alpha_5)$: \\
$$(0,0,0,0,0), (1,0,0,0,0), (0,1,0,0,0),(0,0,1,0,0),(0,0,0,1,0),$$
$$(0,0,0,0,1),(0,0,1,1,1),(1,1,0,0,0)$$
Thus the solutions $u$ to the equation $u^2=u$ are 
$$e_0, e_0+E_1, e_0+E_2, e_0+E_3, e_0+E_4, e_0+E_5, e_0+E_3+E_4+E_5, e_0+E_1+E_2$$
There are 8 idempotents $u$ that satisfy $\epsilon(u)=1$. They are
$$e_0, \,e_1,\, e_2,\, e_3, \, e_4, e_5,\, e_3+e_4+e_5,\, e_0+e_1+e_2.$$
\end{exmp}

\textbf{Case 2.} $\epsilon(u)=0$. Let $u=\alpha_1E_1+\alpha_2E_2+\alpha_3E_3+\alpha_4E_4+\alpha_5E_5$. Then by comparing the coefficients on both sides of $u^2=u$ gives us the following result.
\begin{thm}
An element $u\in \Bbbk[\text{Core}(D_3)]$ is a nonzero nontrivial idempotent with $\epsilon(u)=0$ if and only if the following system of equations 
$$\alpha_1^2-2\alpha_2^2+2\alpha_2-2\alpha_1\alpha_2+\alpha_1\alpha_3+\alpha_1\alpha_4+\alpha_1\alpha_5-2\alpha_2\alpha_3-2\alpha_2\alpha_4-2\alpha_2\alpha_5-\alpha_1=0$$
$$-2\alpha_1^2+\alpha_2^2-2\alpha_1\alpha_2-2\alpha_1\alpha_3-2\alpha_1\alpha_4-2\alpha_1\alpha_5+\alpha_2\alpha_3+\alpha_2\alpha_4+\alpha_2\alpha_5-\alpha_2=0$$
$$-\alpha_3\alpha_4-\alpha_3\alpha_5+2\alpha_4\alpha_5=\alpha_3$$
$$2\alpha_3\alpha_5-\alpha_3\alpha_4-\alpha_4\alpha_5=\alpha_4$$
$$2\alpha_3\alpha_4-\alpha_3\alpha_5-\alpha_4\alpha_5=\alpha_5$$
has nonzero solutions in $\Bbbk$. 
\end{thm}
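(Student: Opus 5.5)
The plan is to expand $u^2$ directly in the basis $E_1,\dots,E_5$ of $\Delta_\Bbbk(\text{Core}(D_3))$ and compare coefficients with $u$. This is the same method that produced the preceding theorem for $\epsilon(u)=1$, but without the cross terms $e_0\delta+\delta e_0$.

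First I would settle which idempotents are in play. The decomposition $u=\alpha e_0+\delta$ discussed before Case 1 shows that an idempotent with $\epsilon(u)=0$ is exactly one with $\alpha=0$. Such an element is therefore $u=\delta=\sum_{i=1}^5\alpha_iE_i\in\Delta_\Bbbk(\text{Core}(D_3))$, and it is idempotent if and only if $\delta^2=\delta$. Every trivial idempotent $e_x$ has augmentation $1$, so any nonzero idempotent with $\epsilon(u)=0$ is automatically nontrivial. This is why the statement only requires a nonzero solution, and no condition like ``more than one component nonzero'' is needed.

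Next I would compute $u^2=\sum_{i,j}\alpha_i\alpha_jE_iE_j$ using the table of products $E_iE_j$ for $\text{Core}(D_3)$ obtained from Lemma~\ref{lem 5.6}, grouping the terms by basis vector.
\begin{itemize}
\item The coefficient of $E_1$ collects $\alpha_1^2$ (from $E_1^2$), $-2\alpha_2^2$ (from $E_2^2$), and $-2\alpha_1\alpha_2$ (from $E_1E_2$ and $E_2E_1$). It also collects $\alpha_1(\alpha_3+\alpha_4+\alpha_5)$ (from $E_1E_j$, $j\ge3$), $-\alpha_2(\alpha_3+\alpha_4+\alpha_5)$ (from $E_2E_j$), and another $-\alpha_2(\alpha_3+\alpha_4+\alpha_5)$ (from $E_jE_2$).
\item The coefficient of $E_2$ follows by the symmetry $1\leftrightarrow2$ of the table.
\item For the coefficient of $E_3$, only the reflection–reflection products contribute, namely $E_3E_4$, $E_3E_5$, $E_4E_5$ and $E_5E_4$. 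They give $-\alpha_3\alpha_4-\alpha_3\alpha_5+2\alpha_4\alpha_5$.
\item The coefficients of $E_4$ and $E_5$ come from the analogous bookkeeping.
\end{itemize}
Since $\Delta_\Bbbk(\text{Core}(D_3))$ is a free $\Bbbk$-module with basis $E_1,\dots,E_5$, the equation $u^2=u$ holds if and only if each coefficient equals $\alpha_i$. This gives the stated system, and it proves both directions at once.

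The only real obstacle is error-free bookkeeping, and I would check it on two points. First, the table line containing $E_3E_5$ has a typo: it reads $--E_3+E_4$, which should be $-E_3+E_4$. Second, I would re-verify the constant-type terms. With no $e_0$ component there are no linear terms other than $-\alpha_i$, so a term such as $+2\alpha_2$ in the first equation should not appear. If it does appear, I would trace it to a typo carried over from the $\epsilon(u)=1$ case and correct it before finalizing. As a sanity check I would test the resulting system on a small example, for instance over $\mathbb{F}_2$ or $\mathbb{Z}$.
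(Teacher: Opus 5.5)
Your proposal is correct and follows the paper's argument: write $u=\sum_{i=1}^5\alpha_iE_i$, expand $u^2$ using the product table from Lemma~\ref{lem 5.6}, and compare coefficients of $E_1,\dots,E_5$. Your bookkeeping for $E_1$, for $E_2$ via the $1\leftrightarrow 2$ symmetry, and for $E_3$ all checks out, as does your remark on the $--E_3+E_4$ typo.

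You can state your correction outright instead of conditionally. Since $u^2$ is purely quadratic in the $\alpha_i$, the $+2\alpha_2$ in the first equation is left over from the $\epsilon(u)=1$ case, where it comes from $e_0\delta+\delta e_0$, and it must be deleted. As printed, the statement is false: $(0,1,0,0,0)$ solves the printed system over $\mathbb{Z}$, yet $E_2^2=E_2-2E_1\neq E_2$. In fact the corrected system has only the zero solution over $\mathbb{Z}$, so $\mathbb{Z}[\text{Core}(D_3)]$ has no nonzero idempotent with $\epsilon(u)=0$ at all.
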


\begin{exmp}
Take $\Bbbk=\mathbb{F}_2$. Then the system of equations becomes 
$$\alpha_1^2+\alpha_1\alpha_3+\alpha_1\alpha_4+\alpha_1\alpha_5-\alpha_1=0$$
$$\alpha_2^2+\alpha_2\alpha_3+\alpha_2\alpha_4+\alpha_2\alpha_5-\alpha_2=0$$
$$\alpha_3\alpha_4+\alpha_3\alpha_5=\alpha_3$$
$$\alpha_3\alpha_4+\alpha_4\alpha_5=\alpha_4$$
$$\alpha_3\alpha_5+\alpha_4\alpha_5=\alpha_5$$

Solving this system in characteristic 2 gives us the solutions $(\alpha_1,\alpha_2,\alpha_3,\alpha_4,\alpha_5)$: \\
$$(0,0,0,0,0), (1,0,0,0,0), (0,1,0,0,0), (0,0,0,1,1), (0,0,1,0,1), (0,0,1,1,0)$$
$$(0,1,0,1,1), (0,1,1,0,1),(0,1,1,1,0),(1,0,0,1,1),(1,0,1,0,1),(1,0,1,1,0)$$
$$(1,1,0,0,0),(1,1,0,1,1), (1,1,1,0,1), (1,1,1,1,0)$$
Thus the solutions $u$ to the equation $u^2=u$ are 
$$0,\,\, E_4+E_5, \,\,E_3+E_5, \,\,E_3+E_4, \,\, E_2, \,\,E_2+E_4+E_5,\,\, E_2+E_3+E_5, \,\,E_2+E_3+E_4,$$
$$E_1,\,\,E_1+E_4+E_5,\,\,E_1+E_3+E_5,\,\,E_1+E_3+E_4,\,\,E_1+E_2,\,\,E_1+E_2+E_4+E_5,$$
$$E_1+E_2+E_3+E_5,\,\,E_1+E_2+E_3+E_4.$$
There are $16$ nonzero idempotents $u$ that satisfy $\epsilon(u)=0$. They are
$$e_4+e_5,\,\,e_3+e_5,\,\,e_3+e_4,\,\,e_2+e_0,\,\,e_2+e_4+e_5+e_0,\,\,e_2+e_3+e_5+e_0,$$
$$e_2+e_3+e_4+e_0, \,\,e_1+e_0,\,\,e_1+e_4+e_5+e_0,\,\,e_1+e_3+e_5+e_0, \,\,e_1+e_3+e_4+e_0,$$
$$e_1+e_2,\,\,e_1+e_2+e_4+e_5,\,\,e_1+e_2+e_3+e_5,\,\,e_1+e_2+e_3+e_4.$$
\end{exmp}

\section{On two open problems on idempotents}\label{S4}

In this section, we give an answer to an open question in \cite{BE-2026}, and also solve a system of equations which was left unsolved in \cite{BE-2026}. 

In \cite{BE-2026}, the authors investigated non-trivial idempotents in the quandle ring where the quandle is the dihedral quandle of order 5, $R_5$, and the coefficient ring is the ring of integers. The following proposition, which appeared in \cite{BE-2026}, gives necessary and sufficient conditions for the quandle ring $\mathbb{Z}[R_5]$ to have non-trivial idempotents for which the value $\varepsilon$ is equal to $1$. However, the authors left the system of equations unsolved. 

\begin{prop}(\cite[Proporition 4.9]{BE-2026})\label{P4.1}
The quandle ring  $\mathbb{Z}[R_5]$ has a non-trivial idempotent for which the value $\varepsilon$ is equal to $1$ if and only if 
the following system has  integer  solutions in which more than one component are non-zero
$$
\begin{cases}
\alpha_1 = \alpha_3 + \alpha_4 +   \alpha_1^2 - \alpha_3^2 - \alpha_4^2 - \alpha_1\alpha_3  - \alpha_1 \alpha_4  - 2 \alpha_3 \alpha_4,    \\ 
\alpha_2 = \alpha_1 + \alpha_3   - \alpha_1^2 + \alpha_2^2 - \alpha_3^2 - \alpha_1\alpha_2  - 2 \alpha_1 \alpha_3  -  \alpha_2 \alpha_3,    \\ 
\alpha_3 = \alpha_2 + \alpha_4   -  \alpha_2^2 + \alpha_3^2 - \alpha_4^2 - \alpha_2\alpha_3  - 2 \alpha_2 \alpha_4  -  \alpha_3 \alpha_4,    \\ 
\alpha_4 = \alpha_1 + \alpha_2 - \alpha_1^2 - \alpha_2^2 + \alpha_4^2 - 2 \alpha_1\alpha_2  -  \alpha_1 \alpha_4  -  \alpha_2 \alpha_4.
\end{cases}
$$
\end{prop}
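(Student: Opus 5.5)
Writing the element in the augmentation basis, put $u=e_0+\sum_{i=1}^{4}\alpha_iE_i$ with $E_i=e_i-e_0$ in $\mathbb{Z}[R_5]$. Then $\varepsilon(u)=1$. Conversely, the decomposition $\mathbb{Z}[R_5]=\mathbb{Z}e_0+\Delta(R_5)$ used earlier for $\text{Core}(D_3)$ shows that every idempotent with augmentation $1$ has this form. Since $\Delta$ is a two-sided ideal, $u^2=u$ is equivalent to
$$\delta^2+e_0\delta+\delta e_0=\delta,\qquad \delta=\sum_{i=1}^4\alpha_iE_i.$$
So the plan is to compute everything in the basis $E_1,\dots,E_4$ and compare coefficients.

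The key steps, in order, are as follows.

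\textbf{First}, compute the structure constants. In $R_5$ we have $e_i e_j=e_{2j-i}$ with indices mod $5$. Expanding as in Lemma~\ref{lem 5.6} gives
$$E_iE_j=E_{\overline{2j-i}}-E_{\overline{-i}}-E_{\overline{2j}},\qquad e_0E_j=E_{\overline{2j}},\qquad E_ie_0=E_{\overline{-i}},$$
with the convention $E_0=0$.

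\textbf{Second}, expand $\delta^2=\sum_{i,j}\alpha_i\alpha_j E_iE_j$ together with
$$e_0\delta+\delta e_0=\sum_j\alpha_j\bigl(E_{\overline{2j}}+E_{\overline{-j}}\bigr).$$

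\textbf{Third}, collect the coefficient of each $E_k$ for $k=1,\dots,4$. This yields four quadratic equations. For instance, the linear part of the coefficient of $E_1$ comes from $E_{\overline{2j}}=E_1$, i.e.\ $j=3$, and from $E_{\overline{-j}}=E_1$, i.e.\ $j=4$; this gives $\alpha_3+\alpha_4$, matching the first displayed equation. The quadratic terms are obtained by tracking, for each ordered pair $(i,j)$, which of the three indices $\overline{2j-i}$, $\overline{-i}$, $\overline{2j}$ equals $k$. I would organize this as a small $4\times 4$ table per $k$ and then symmetrize the $\alpha_i\alpha_j$ terms.

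\textbf{Finally}, observe the correspondence of the solutions. The element $u$ is trivial exactly when $u=e_0$ (all $\alpha_i=0$) or $u=e_m$ (only $\alpha_m=1$, the rest zero). Hence a non-trivial idempotent corresponds precisely to a solution with more than one nonzero component. A solution with exactly one nonzero entry $\alpha_m=c$ forces $c^2=c$ from the equations, so $c=1$ and $u$ is trivial.

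The main obstacle is the bookkeeping in the third step: sign errors in the quadratic terms, and the mod-$5$ index collisions where $\overline{2j-i}$, $\overline{-i}$ or $\overline{2j}$ equals $0$ and the term therefore drops. I would verify the resulting system against a direct check. The trivial solutions $\alpha=e_m$ must satisfy it. Plugging in $\alpha_1=1$ with the other $\alpha_i=0$ gives $1=1$ in the first equation, and the remaining equations check similarly. This sanity check is what guards against those sign and index errors.
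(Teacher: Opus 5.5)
Your proposal is correct and follows essentially the same route as the paper. The paper only quotes this result from Bardakov--Elhamdadi, but it uses exactly your method for $\text{Core}(D_3)$: write $u=e_0+\delta$, reduce $u^2=u$ to $\delta^2+e_0\delta+\delta e_0=\delta$, and compare coefficients using the $E_iE_j$ structure constants. Carrying out your third step with $E_iE_j=E_{\overline{2j-i}}-E_{\overline{-i}}-E_{\overline{2j}}$ reproduces the four displayed equations exactly. Your observation that a single nonzero component $c$ must satisfy $c^2=c$ correctly justifies the ``more than one component'' clause.
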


In this section, we use the Gr\"{o}bner basis technique to show that the only integer solutions of the system are $$(0,0,0,0), (1,0,0,0), (0,1,0,0), (0,0,1,0), (0,0,0,1),$$ 
and thus there are no non-trivial idempotents in the quandle ring $\mathbb{Z}[R_5]$ for which the value $\varepsilon$ is equal to $1$. 

Let 
$$p_1(\alpha_1,\alpha_2,\alpha_3,\alpha_4)=\alpha_3 + \alpha_4 +   \alpha_1^2 - \alpha_3^2 - \alpha_4^2 - \alpha_1\alpha_3  - \alpha_1 \alpha_4  - 2 \alpha_3 \alpha_4-\alpha_1,$$
$$p_2(\alpha_1,\alpha_2,\alpha_3,\alpha_4)=\alpha_1 + \alpha_3   - \alpha_1^2 + \alpha_2^2 - \alpha_3^2 - \alpha_1\alpha_2  - 2 \alpha_1 \alpha_3  -  \alpha_2 \alpha_3-\alpha_2,$$
$$p_3(\alpha_1,\alpha_2,\alpha_3,\alpha_4)= \alpha_2 + \alpha_4   -  \alpha_2^2 + \alpha_3^2 - \alpha_4^2 - \alpha_2\alpha_3  - 2 \alpha_2 \alpha_4  -  \alpha_3 \alpha_4-\alpha_3,$$
$$p_4(\alpha_1,\alpha_2,\alpha_3,\alpha_4)=\alpha_1 + \alpha_2 - \alpha_1^2 - \alpha_2^2 + \alpha_4^2 - 2 \alpha_1\alpha_2  -  \alpha_1 \alpha_4  -  \alpha_2 \alpha_4-\alpha_4.$$

\vskip 0.1in 

The Gr\"{o}bner basis for the ideal generated by the polynomials $p_1, p_2, p_3$ and $p_4$, computed using Mathematica, is given by 

\begin{equation}\label{EQ4.1}
\begin{cases}
\alpha_4-6\alpha_4^2+5\alpha_4^3,\\ 
-\alpha_4 + 4 \alpha_3 \alpha_4 + \alpha_4^2,\\
-\alpha_3 + \alpha_3^2 + \alpha_4 - \alpha_4^2, 
\\-\alpha_4 + 4 \alpha_2 \alpha_4 + \alpha_4^2,
\\ 4 \alpha_2 \alpha_3 - \alpha_4 + \alpha_4^2, \\
-\alpha_2 + \alpha_2^2 + \alpha_4 - \alpha_4^2,
\\ -\alpha_4 + 4 \alpha_1 \alpha_4 + \alpha_4^2,\\
4 \alpha_1 \alpha_3 - \alpha_4 + \alpha_4^2,\\
4 \alpha_1 \alpha_2 - \alpha_4 + \alpha_4^2,\\
-\alpha_1 + \alpha_1^2 + \alpha_4 - \alpha_4^2.
\end{cases}
\end{equation}

The first polynomial equation from \eqref{EQ4.1}, $\alpha_4-6\alpha_4^2+5\alpha_4^3=0$, gives us the integer solutions $\alpha_4=0$ or $\alpha_4=1$. When $\alpha_4=1$, the system of equations \eqref{EQ4.1} reduces to

\begin{equation}\label{EQ4.2}
\begin{cases}
4 \alpha_3 =0,\\
-\alpha_3 + \alpha_3^2 =0, 
\\ 4 \alpha_2 =0,
\\ 4 \alpha_2 \alpha_3 =0, \\
-\alpha_2 + \alpha_2^2 =0,
\\4 \alpha_1=0,\\
4 \alpha_1 \alpha_3 =0,\\
4 \alpha_1 \alpha_2 =0,\\
-\alpha_1 + \alpha_1^2 =0.
\end{cases}
\end{equation} 

The only integer solution to the system of equations is $(0,0,0,1)$. 

When $\alpha_4=0$, the system of equations reduces to

\begin{equation}\label{EQ4.3}
\begin{cases}
-\alpha_3 + \alpha_3^2 =0, 
\\ 4 \alpha_2 \alpha_3 =0, \\
-\alpha_2 + \alpha_2^2 =0,\\
4 \alpha_1 \alpha_3 =0,\\
4 \alpha_1 \alpha_2 =0,\\
-\alpha_1 + \alpha_1^2 =0.
\end{cases}
\end{equation} 

Clearly, the only integer solutions when $\alpha_4=0$ are $$(0,0,0,0), (1,0,0,0), (0,1,0,0), (0,0,1,0).$$ 

Thus, the only integer solutions of the system of equations in Proposition~\ref{P4.1} are $$(0,0,0,0), (1,0,0,0), (0,1,0,0), (0,0,1,0), (0,0,0,1).$$ 

\vskip 0.1in 

In \cite{BE-2026}, the authors also investigated idempotents in the quandle ring $\Z[C_5]$, where $C_5$ denotes the commutative quandle of order 5. Suppose now that $u$ is an idempotent in  $\Z[C_5]$ for which $\varepsilon(u) =1$.  Then 
$$
u = a_0 + \delta = a_0 + \beta_1 f_1 + \beta_2 f_2 + \beta_3 f_3 + \beta_4 f_4
$$
for some integers $\beta_i$. Here $f_i=a_i-a_0$, where $a_i\in C_5$ for $i=1,2,3,4$. The authors showed that the element $u$ is an idempotent if and only if $2 \delta a_0 - \delta = -\delta^2$, which is equivalent to the system
$$
\begin{cases}
2 \beta_2 - \beta_1 = -\beta_1^2 + 2 \beta_2^2 + 2 \beta_1 \beta_2 + 2 \beta_2 \beta_3 + 2 \beta_2 \beta_4 - 2 \beta_3 \beta_4,    \\ 
2 \beta_4 - \beta_2 = -\beta_2^2 + 2 \beta_4^2 - 2 \beta_1 \beta_3 + 2 \beta_1 \beta_4 + 2 \beta_2 \beta_4 + 2 \beta_3 \beta_4,    \\ 
2 \beta_1 - \beta_3 = -\beta_3^2 + 2 \beta_1^2 + 2 \beta_1 \beta_2 + 2 \beta_1 \beta_3 + 2 \beta_1 \beta_4 - 2 \beta_2 \beta_4,    \\ 
2 \beta_3 - \beta_4 = -\beta_4^2 + 2 \beta_3^2 - 2 \beta_1 \beta_2 + 2 \beta_1 \beta_3 + 2 \beta_2 \beta_3 + 2 \beta_3 \beta_4,
\end{cases}
$$

and raised the following question: \vskip 0.2in 

\textbf{Question} (\cite[Question 5.10]{BE-2026})
Is it true that this system does not have  integer  solutions in which more than one component is non-zero? \vskip 0.2in

We give a positive answer to the question using the Gr\"{o}bner basis technique and the rational root theorem showing that the only integer solutions of the system are $$(0,0,0,0), (-1,0,0,0), (0,-1,0,0), (0,0,-1,0), (0,0,0,-1).$$

Let 
$$q_1(\beta_1, \beta_2, \beta_3, \beta_4)=-\beta_1^2 + 2 \beta_2^2 + 2 \beta_1 \beta_2 + 2 \beta_2 \beta_3 + 2 \beta_2 \beta_4 - 2 \beta_3 \beta_4- 2 \beta_2 + \beta_1 ,$$
$$q_2(\beta_1, \beta_2, \beta_3, \beta_4)=-\beta_2^2 + 2 \beta_4^2 - 2 \beta_1 \beta_3 + 2 \beta_1 \beta_4 + 2 \beta_2 \beta_4 + 2 \beta_3 \beta_4 - 2 \beta_4 + \beta_2,$$
$$q_3(\beta_1, \beta_2, \beta_3, \beta_4)=-\beta_3^2 + 2 \beta_1^2 + 2 \beta_1 \beta_2 + 2 \beta_1 \beta_3 + 2 \beta_1 \beta_4 - 2 \beta_2 \beta_4 - 2 \beta_1 + \beta_3,$$
$$q_4(\beta_1, \beta_2, \beta_3, \beta_4)=-\beta_4^2 + 2 \beta_3^2 - 2 \beta_1 \beta_2 + 2 \beta_1 \beta_3 + 2 \beta_2 \beta_3 + 2 \beta_3 \beta_4 - 2 \beta_3 + \beta_4.$$

The Gr\"{o}bner basis for the ideal generated by the polynomials $q_1, q_2, q_3$ and $q_4$, computed using Mathematica, is given by 

\begin{footnotesize}
\begin{equation}\label{EQ4.4}
\begin{cases}
-6 \beta_4 - 21 (\beta_4)^2 + 110 (\beta_4)^3 + 500 (\beta_ 4)^4 + 
 1000 (\beta_4)^5 + 625 (\beta_ 4)^6, \\\\ -6 \beta_ 4 
 - 24 \beta_ 3 \beta_ 4 + 9 (\beta_ 4)^2 + 60 \beta_ 3 (\beta_ 4)^2 + 
 65 (\beta_ 4)^3 + 200 \beta_ 3 (\beta_ 4)^3 + 175 (\beta_ 4)^4 + 
 500 \beta_ 3 (\beta_ 4)^4 + 125 (\beta_ 4)^5, \\\\
24 \beta_ 3 + 24 (\beta_ 3)^2 - 6 \beta_ 4 + 12 \beta_ 3 \beta_ 4 + 
 60 (\beta_ 3)^2 \beta_ 4 + 129 (\beta_ 4)^2 \\ - 
 20 \beta_ 3 (\beta_ 4)^2 + 100 (\beta_ 3)^2 (\beta_ 4)^2 + 
 485 (\beta_ 4)^3 - 200 \beta_ 3 (\beta_ 4)^3 + 975 (\beta_ 4)^4 + 
 625 (\beta_ 4)^5,\\\\ -192 \beta_ 3 + 128 (\beta_ 3)^2 + 
 320 (\beta_ 3)^3 + 246 \beta_ 4 + 576 \beta_ 3 \beta_ 4 - 
 569 (\beta_ 4)^2 \\+ 1920 \beta_ 3 (\beta_ 4)^2 - 3365 (\beta_ 4)^3 + 
 4800 \beta_ 3 (\beta_ 4)^3 - 8175 (\beta_ 4)^4 - 5625 (\beta_ 4)^5, \\\\
6 \beta_ 4 + 12 \beta_ 2 \beta_ 4 + 12 \beta_ 3 \beta_ 4 + 
 21 (\beta_ 4)^2 + 30 \beta_ 2 (\beta_ 4)^2 + 
 30 \beta_ 3 (\beta_ 4)^2 + 40 (\beta_ 4)^3 + 
 50 \beta_ 2 (\beta_ 4)^3 + 50 \beta_ 3 (\beta_ 4)^3 + 
 25 (\beta_ 4)^4, \\\\
128 \beta_ 2 \beta_ 3 - 78 \beta_ 4 - 64 \beta_ 2 \beta_ 4 - 
 256 \beta_ 3 \beta_ 4 - 320 (\beta_ 3)^2 \beta_ 4 - 
 643 (\beta_ 4)^2\\ - 320 \beta_ 2 (\beta_ 4)^2 - 
 320 \beta_ 3 (\beta_ 4)^2 - 1815 (\beta_ 4)^3 - 3125 (\beta_ 4)^4 - 
 1875 (\beta_ 4)^5, \\\\
32 \beta_ 2 + 32 (\beta_ 2)^2 + 48 \beta_ 3 + 48 (\beta_ 3)^2 + 
 14 \beta_ 4 + 80 \beta_ 2 \beta_ 4 + 96 \beta_ 3 \beta_ 4 + 
 80 (\beta_ 3)^2 \beta_ 4 \\+ 499 (\beta_ 4)^2 + 
 80 \beta_ 2 (\beta_ 4)^2 + 80 \beta_ 3 (\beta_ 4)^2 + 
 1735 (\beta_ 4)^3 + 3125 (\beta_ 4)^4 + 1875 (\beta_ 4)^5, \\\\
64 \beta_ 3 + 64 (\beta_ 3)^2 + 142 \beta_ 4 + 
 128 \beta_ 1 \beta_ 4 \\+ 192 \beta_ 2 \beta_ 4 + 
 384 \beta_ 3 \beta_ 4 + 320 (\beta_ 3)^2 \beta_ 4 + 
 707 (\beta_ 4)^2 + 320 \beta_ 2 (\beta_ 4)^2 + 
 320 \beta_ 3 (\beta_ 4)^2 + 1815 (\beta_ 4)^3 + 3125 (\beta_ 4)^4 + 
 1875 (\beta_ 4)^5, \\\\ -32 \beta_ 3 + 128 \beta_ 1 \beta_ 3 - 
 32 (\beta_ 3)^2 - 14 \beta_ 4 - 96 \beta_ 2 \beta_ 4 + 
 64 \beta_ 3 \beta_ 4 \\+ 160 (\beta_ 3)^2 \beta_ 4 - 
 419 (\beta_ 4)^2 + 160 \beta_ 2 (\beta_ 4)^2 + 
 160 \beta_ 3 (\beta_ 4)^2 - 1655 (\beta_ 4)^3 - 3125 (\beta_ 4)^4 - 
 1875 (\beta_ 4)^5, \\\\
64 \beta_ 1 \beta_ 2 - 80 \beta_ 3 - 80 (\beta_ 3)^2 - 14 \beta_ 4 - 
 80 \beta_ 2 \beta_ 4 - 160 \beta_ 3 \beta_ 4 - 
 80 (\beta_ 3)^2 \beta_ 4 \\- 499 (\beta_ 4)^2 - 
 80 \beta_ 2 (\beta_ 4)^2 - 80 \beta_ 3 (\beta_ 4)^2 - 
 1735 (\beta_ 4)^3 - 3125 (\beta_ 4)^4 - 1875 (\beta_ 4)^5, \\\\
64 \beta_ 1 + 64 (\beta_ 1)^2 + 32 \beta_ 3 + 32 (\beta_ 3)^2 - 
 50 \beta_ 4 - 32 \beta_ 2 \beta_ 4 - 64 \beta_ 3 \beta_ 4 - 
 160 (\beta_ 3)^2 \beta_ 4 \\+ 355 (\beta_ 4)^2 - 
 160 \beta_ 2 (\beta_ 4)^2 - 160 \beta_ 3 (\beta_ 4)^2 + 
 1655 (\beta_ 4)^3 + 3125 (\beta_ 4)^4 + 1875 (\beta_ 4)^5.
\end{cases}
\end{equation} 
\end{footnotesize}

We adhere to the same strategy that was used to solve the previous system of equations. Consider the one variable polynomial equation in $\beta_4$ from \eqref{EQ4.4}: $$-6 \beta_4 - 21 (\beta_4)^2 + 110 (\beta_4)^3 + 500 (\beta_ 4)^4 + 1000 (\beta_4)^5 + 625 (\beta_ 4)^6=0.$$
We have $\beta_4=0$ or $-6 - 21\beta_4 + 110 \beta_4^2 + 500 \beta_ 4^3 + 1000 \beta_4^4 + 625 \beta_ 4^5=0$.
The rational root theorem says that the only possible integer solutions to $$-6 - 21\beta_4 + 110 \beta_4^2 + 500 \beta_ 4^3 + 1000 \beta_4^4 + 625 \beta_ 4^5=0$$
are $\pm 1, \pm 2, \pm 3$ and $\pm 4$. Straightforward computations yield that the only integer solution is $\beta_4=-1$. We have $\beta_4=0$ or $\beta_4=-1$. 

When $\beta_4=0$, the system of equations \eqref{EQ4.4} reduces to 

\begin{footnotesize}
\begin{equation}
\begin{cases}
24 \beta_ 3 + 24 (\beta_ 3)^2 =0,\\\\ 
-192 \beta_ 3 + 128 (\beta_ 3)^2 + 
 320 (\beta_ 3)^3 =0,\\\\
128 \beta_ 2 \beta_ 3 =0, \\\\
32 \beta_ 2 + 32 (\beta_ 2)^2 + 48 \beta_ 3 + 48 (\beta_ 3)^2=0, \\\\
64 \beta_ 3 + 64 (\beta_ 3)^2=0, \\\\ 
-32 \beta_ 3 + 128 \beta_ 1 \beta_ 3 - 
 32 (\beta_ 3)^2 =0, \\\\
64 \beta_ 1 \beta_ 2 - 80 \beta_ 3 - 80 (\beta_ 3)^2 =0, \\\\
64 \beta_ 1 + 64 (\beta_ 1)^2 + 32 \beta_ 3 + 32 (\beta_ 3)^2=0.
\end{cases}
\end{equation}
\end{footnotesize}

Clearly, the only integer solutions when $\beta_4=0$ are 
$$(0,0,0,0), (-1,0,0,0), (0,-1,0,0), (0,0,-1,0).$$

When $\beta_4=-1$, the second polynomial becomes $384\beta_3$, whose only root is zero. Then the fifth polynomial becomes $-82\beta_2$ which implies that $\beta_2=0$ is the only root. It follows from the eighth polynomial that $\beta_1=0$. Thus the only integer solution when $\beta_4=-1$ is $(0,0,0,-1)$.

We conclude that the only integer solutions of the system are $$(0,0,0,0), (-1,0,0,0), (0,-1,0,0), (0,0,-1,0), (0,0,0,-1).$$

\section{Units and nilpotent elements in quandle rings}\label{S5}

We recall the definition of an extended quandle ring. The extended quandle ring $S$ of $\Bbbk[Q]$ is defined as $S:=\Bbbk[Q]\oplus \Bbbk e,$ where $e(\notin Q)$ is a symbol that satisfies $$e(\sum_{i}^{}\alpha_ix_i)=\sum_{i}^{}\alpha_ix_i=(\sum_{i}^{}\alpha_ix_i)e.$$

Let $\Bbbk$ be an integral domain with unity $e.$ Then we have $\Bbbk e\cong \Bbbk$, and therefore $S$ can be expressed as $$S=\left\{\sum_{i}^{}\alpha_ix_i+\gamma \mid \alpha_i,\,\gamma\in \Bbbk,\,x_i\in Q\right\}.$$

In this section, we present some results concerning units and nilpotent elements in quandle rings as well as their extended quandle rings. Since a quandle ring has no unity, it has no units. Hence our discussion  of units is restricted to extended quandle rings. The augmentation map for extended quandle ring, $\epsilon^o:S\mapsto \Bbbk$ is defined as $$\epsilon^o(\sum_{i}^{}\alpha_ix_i+\gamma)=\sum_{i}^{}\alpha_i+\gamma.$$
The following lemma is believed to be well-known, and we omit its proof.

\begin{lem}
    $\epsilon^o$ is a surjective ring homomorphism.
\end{lem}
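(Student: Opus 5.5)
The plan is to verify directly that $\epsilon^o$ is additive, multiplicative and unital, and then note that it is onto. Write two general elements of $S$ as $u=\sum_i \alpha_i x_i+\gamma$ and $v=\sum_j \beta_j y_j+\delta$, where $\alpha_i,\beta_j,\gamma,\delta\in\Bbbk$ and $x_i,y_j\in Q$. Only finitely many coefficients are nonzero, and we may take the $x_i$ distinct and the $y_j$ distinct. Additivity is immediate from the definition, since $\epsilon^o$ just sums coefficients and is $\Bbbk$-linear on the free $\Bbbk$-module with basis $Q\cup\{e\}$. Since $e$ acts as the identity of $S$, we have $\epsilon^o(e)=1$. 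Surjectivity follows because $\epsilon^o(\gamma e)=\gamma$ for every $\gamma\in\Bbbk$; alternatively, $\epsilon^o(\gamma x)=\gamma$ for any $x\in Q$.

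The main step is multiplicativity. Expanding with the multiplication rules of $S$ (the quandle product on basis elements, and $e$ acting as a two-sided identity) gives
\[
uv=\sum_{i,j}\alpha_i\beta_j\,(x_i\ast y_j)+\delta\sum_i\alpha_i x_i+\gamma\sum_j\beta_j y_j+\gamma\delta .
\]
Different pairs $(i,j)$ may give the same product $x_i\ast y_j$. This causes no difficulty, because $\epsilon^o$ is linear and therefore only the total sum of coefficients matters, not how they are collected. Hence
\[
\epsilon^o(uv)=\sum_{i,j}\alpha_i\beta_j+\delta\sum_i\alpha_i+\gamma\sum_j\beta_j+\gamma\delta=\Bigl(\sum_i\alpha_i+\gamma\Bigr)\Bigl(\sum_j\beta_j+\delta\Bigr),
\]
which equals $\epsilon^o(u)\epsilon^o(v)$. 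For the middle terms we use that $\Bbbk$ is commutative, as an integral domain. Equivalently, $\epsilon^o$ restricted to $\Bbbk[Q]$ is the augmentation map $\epsilon$, which the paper already treats as a ring homomorphism.

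The only point that needs care is the justification that $\epsilon^o$ is well defined and linear when products of basis elements collide. Since $Q\cup\{e\}$ is a $\Bbbk$-basis of $S$, and $\epsilon^o$ is the $\Bbbk$-linear functional sending every basis element to $1$, this is automatic.
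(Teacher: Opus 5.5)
Your proof is correct. The paper gives no argument to compare it with: it calls the lemma well known and omits the proof. Your direct check is exactly the routine verification left to the reader: $\epsilon^o$ is the $\Bbbk$-linear functional sending each element of the basis $Q\cup\{e\}$ to $1$, the expansion of $uv$ gives $\epsilon^o(uv)=\epsilon^o(u)\,\epsilon^o(v)$ (using commutativity of $\Bbbk$), $\epsilon^o(e)=1$, and surjectivity follows from $\epsilon^o(\gamma e)=\gamma$.
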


If we consider $Q$ to be a trivial quandle, which is associative, then $\epsilon^o$ maps units in $S$ to units in $\Bbbk.$ An element $u$ is said to be a unit in $S$ if there exists $v\in S$ such that $uv=e=vu$ and in that case $\epsilon(u)\in U,$ where $U$ is the set of units in $\Bbbk.$ We next discuss the units of $S.$

In \cite{BPS-2019}, the authors studied units in the extended quandle ring of the trivial quandle. However, our approach in the following theorem is different. We give necessary and sufficient conditions for an element to be a unit in the extended quandle ring in terms of the solvability of a system of equations, and we use the results of this approach in Section 6. Therefore, we give a proof of the following theorem as we would like the paper to be self-contained. 

\begin{thm}\label{thm 5.2}
    Let $\Bbbk$ be an integral domain with unity $e,\,U$ be the set of units of $\Bbbk,$ $Q$ be a trivial quandle and $S$ be the extended quandle ring of $\Bbbk[Q].$ An element $u=\alpha x+\delta+\gamma\in S,$ where $\alpha,\gamma\in \Bbbk,\,x\in Q,\,\delta\in\Delta_\Bbbk(Q)$ is a unit if and only if there exists $\alpha_1,\,\gamma_1\in \Bbbk,\,\delta_1\in\Delta_\Bbbk(Q),$ such that $\alpha+\gamma,\,\alpha_1+\gamma_1\in U$ and the following system of equations is solvable over $\Bbbk.$
    \begin{equation}  \label{eqn7.3}
    \begin{array}{rcccl}
    \alpha\alpha_1+\gamma_1\alpha+\gamma\alpha_1&=& 0 & &\\
    \alpha_1\delta+\gamma_1\delta+\gamma\delta_1&=& 0 & &\\
    %&=&\alpha\delta_1+\gamma_1\delta+\delta_1\gamma\\
    \alpha_1\delta &=&\alpha\delta_1 & &\\
    \gamma\gamma_1&=& e & &
    \end{array}
\end{equation}
\end{thm}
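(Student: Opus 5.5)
The plan is to compute $uv$ directly in $S$ for a candidate inverse $v=\alpha_1 x+\delta_1+\gamma_1$ and compare components. Since $Q$ is trivial, $y\ast z=y$ for all $y,z\in Q$. Hence for $a=\sum a_y y$ and $b=\sum b_z z$ in $\Bbbk[Q]$ we get $ab=\epsilon(b)\,a$. In particular, we have
\[
x\cdot x=x,\qquad x\delta_1=0,\qquad \delta x=\delta,\qquad \delta\delta_1=0,
\]
because $\epsilon(\delta_1)=0$.

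Expanding the product then gives
\[
uv=(\alpha\alpha_1+\alpha\gamma_1+\gamma\alpha_1)\,x+(\alpha_1\delta+\gamma_1\delta+\gamma\delta_1)+\gamma\gamma_1 .
\]
We use the decomposition $S=\Bbbk x\oplus\Delta_\Bbbk(Q)\oplus\Bbbk e$. This is direct: a general element $\sum\beta_y y+\gamma$ is written as $\epsilon(\sum\beta_y y)\,x+\big(\sum\beta_y y-\epsilon(\sum \beta_y y)x\big)+\gamma$. Setting $uv=e$ is therefore equivalent to the first, second and fourth equations of (\ref{eqn7.3}).

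Next we compute $vu$ the same way, which gives
\[
vu=(\alpha_1\alpha+\alpha_1\gamma+\gamma_1\alpha)\,x+(\alpha\delta_1+\gamma\delta_1+\gamma_1\delta)+\gamma\gamma_1 .
\]
The $x$- and $e$-components agree with those of $uv$. The $\Delta$-component differs from that of $uv$ by $\alpha\delta_1-\alpha_1\delta$. So $vu=uv$ holds exactly when the third equation $\alpha_1\delta=\alpha\delta_1$ holds. Together with the above, this shows that $u$ is a two-sided unit with inverse $v$ iff the whole system (\ref{eqn7.3}) has a solution $(\alpha_1,\delta_1,\gamma_1)$.

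It remains to account for the conditions $\alpha+\gamma,\ \alpha_1+\gamma_1\in U$. For necessity, $Q$ trivial makes $S$ associative with identity $e$, so the Lemma that $\epsilon^o$ is a ring homomorphism applies. It gives $\epsilon^o(u)\epsilon^o(v)=\epsilon^o(e)=1$, and since $\epsilon^o(u)=\alpha+\gamma$ and $\epsilon^o(v)=\alpha_1+\gamma_1$, both are units of $\Bbbk$. One can also see this directly from the equations: adding the first and fourth gives $(\alpha+\gamma)(\alpha_1+\gamma_1)=e$. For sufficiency these unit conditions are then automatic, and the system alone yields the inverse.

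The main obstacle I expect is bookkeeping rather than anything conceptual. We must justify that the decomposition is direct, so that comparing coefficients is legitimate. We must also handle the ordering in the trivial quandle carefully: $ab=\epsilon(b)a$ and not $\epsilon(a)b$. It is exactly this asymmetry that produces the extra third equation.
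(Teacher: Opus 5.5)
Your proposal is correct and follows essentially the same route as the paper. Both expand $uv$ and $vu$ using $x\delta_1=0$, $\delta x=\delta$ and $\delta\delta_1=0$, compare components in $S=\Bbbk x\oplus\Delta_\Bbbk(Q)\oplus\Bbbk e$ (with the $vu$ equation giving $\alpha_1\delta=\alpha\delta_1$), and get the unit conditions from $\epsilon^o$; you additionally spell out sufficiency and note that adding the first and fourth equations gives $(\alpha+\gamma)(\alpha_1+\gamma_1)=e$, which the paper leaves implicit.
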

\begin{proof}
Any element $u\in S$ can be written as $u=\alpha x+\delta+\gamma,$ where $x\in Q,\,\delta\in\Delta_\Bbbk(Q),\,\alpha,\,\gamma\in R.$ Assume that $u$ is unit. Then there exists $v=\alpha_1 x+\delta_1+\gamma_1,\,x\in Q,\,\delta_1\in\Delta_\Bbbk(Q),\,\alpha_1,\,\gamma_1\in \Bbbk.$ such that 
\begin{equation}\label{eqn 7.1}
    uv=e=vu.
\end{equation}
We now compute $uv$ and $vu.$
\[
\begin{split}
    uv&=(\alpha x+\delta+\gamma)(\alpha_1 x+\delta_1+\gamma_1)\cr
    &=\alpha\alpha_1 x+\alpha x\delta_1+\alpha_1\delta x+\delta\delta_1+\gamma_1\alpha x+\gamma_1\delta+\gamma\alpha_1 x+\gamma\delta_1
\end{split}
\]
When $Q$ is trivial, we know that $\Delta_\Bbbk^2(Q)=\{0\}$ \cite[Theorem 3.5]{BPS-2019}. Therefore, $$uv=\alpha\alpha_1 x+\gamma_1\alpha x+\gamma\alpha_1 x+\alpha x\delta_1+\alpha_1\delta x+\gamma_1\delta+\gamma\delta_1+\gamma\gamma_1$$ Similarly,
$$vu=\alpha_1\alpha x+\gamma\alpha_1 x+\gamma_1\alpha x+\alpha_1 x\delta+\alpha\delta_1 x+\gamma\delta_1+\gamma_1\delta+\gamma_1\gamma$$

Since $S=\Bbbk[Q]\oplus \Bbbk=\Bbbk x\oplus\Delta_\Bbbk(Q)\oplus \Bbbk,$ where $x\in Q$ and $\Delta_\Bbbk(Q)$ is a two sided ideal, equation~\ref{eqn 7.1} gives us the following set of equations.
\begin{equation}\label{eqn}  
    \begin{array}{rcccl}
    \alpha\alpha_1+\gamma_1\alpha+\gamma\alpha_1&=& 0 &=&\alpha_1\alpha+\gamma\alpha_1+\gamma_1\alpha\\
    \alpha x\delta_1+\alpha_1\delta x+\gamma_1\delta+\gamma\delta_1 &=& 0 &=&\alpha_1 x\delta+\alpha\delta_1x+\gamma\delta_1+\delta_1\gamma\\
    \gamma\gamma_1&=& e &=&\gamma_1\gamma.
    \end{array}
\end{equation}
Since $Q$ is trivial, a simple calculation shows that $x \delta= x\delta_1=0$ and $\delta x= \delta,\,\delta_1 x=\delta_1.$ Also using the fact that $\Bbbk$ is commutative,(\ref{eqn}) can be simplified as
\begin{equation*} 
    \begin{array}{rcccl}
    \alpha\alpha_1+\gamma_1\alpha+\gamma\alpha_1&=& 0 & &\\
    \alpha_1\delta+\gamma_1\delta+\gamma\delta_1&=& 0 & &\\
    %&=&\alpha\delta_1+\gamma_1\delta+\delta_1\gamma\\
    \alpha_1\delta &=&\alpha\delta_1 & &\\
    \gamma\gamma_1&=& e & &\\
    \end{array}
\end{equation*}
By definition, we have $\epsilon^o(u)=\alpha+\gamma$ and $\epsilon^o(v)=\alpha_1+\gamma_1.$ If $u$ is a unit (so is $v$), then we must have $\alpha+\gamma,\,\alpha_1+\gamma_1\in U.$
\end{proof}

\begin{exmp}\label{exmp 7.3}
    Let $\Bbbk=\mathbb{F}_2$ and $Q$ be a trivial quandle. Then $U=\{1\}.$ Let $u=\alpha x+\delta+\gamma\in S$ be a unit and $v=\alpha_1x+\delta_1+\gamma_1$ be its inverse. Then $\alpha+\gamma=\alpha_1+\gamma_1=1$. From (\ref{eqn7.3}), it is clear that $(\gamma,\,\gamma_1)=(1,\,1).$ So $\alpha=\alpha_1=0.$ Finally, we have $\gamma_1\delta+\gamma\delta_1=0,$ or $\delta=\delta_1,$ which is solvable over $\mathbb{F}_2$ Therefore, $u$ is a unit in $S$ if and only if $u=\delta+1.$ In particular, when $Q=\{x_0,\,x_1\,x_2\}$  then $$u\in\{1,\,x_0+x_1+1,\,x_0+x_2+1,\,x_1+x_2+1\}.$$
    
\end{exmp}

%We say $x\in R[Q]$ is \textit{nilpotent} of index $k,$ if $k$ is the smallest positive integer such that $x^k=0.$ Clearly, $0$ is a nilpotent element of index $1$ and any element $\delta\in\Delta_\Bbbk(Q)$ is nilpotent of index $2,$ where $Q$ is a trivial quandle.

%\begin{thm}
    %Let $x=\sum_{i}{}\alpha_ix_i\in R[Q],$ where $R$ is an integral domain with unity and $Q$ be a trivial quandle. Then $x$ is  nilpotent of index $k\geq 1$  if and only if $\sum_{i}^{}\alpha_i^{k-1}=0.$ 
%\end{thm}
%\begin{proof}
    %$x^2=\sum_{i}^{}\alpha_i\sum_{j}^{}\alpha_j x_i,\,x^3=\sum_{i}^{}\alpha_i\sum_{j}^{}\alpha_j^2 x_i.$ In general, we have $$x^k=\sum_{i}^{}\alpha_i\sum_{j}^{}\alpha_j^{k-1}x_i,\,k>1.$$ Therefore $x^k=0$ if and only if $\sum_{i}^{}\alpha_i\sum_{j}^{}\alpha_j^{k-1}x_i=0.$ Comparing the coefficients of $x_i,$ we get $\alpha_i\sum_{j}^{}\alpha_j^{k-1}=0,$ for all $i.$ If $\alpha_i=0,$ for all $i,$ then $x=0,$ which is always a nilpotent. So assume that $\alpha_i\neq 0,$ for some $i,$ and it then follows that $\sum_{j}^{}\alpha_j^{k-1}=0.$ 
%\end{proof}
We now characterize the nilpotent elements of $\Bbbk[Q],$ where $Q$ is a trivial quandle. Clearly, $0$ is a nilpotent element of index $1$ and any element $\delta\in\Delta_\Bbbk(Q)$ is nilpotent of index $2.$
\begin{thm}\label{thm 5.4}
    Let $x\in \Bbbk [Q],$ where $\Bbbk$ is an integral domain with unity and $Q$ is a trivial quandle. Then $x$ is  nilpotent  if and only if $x\in\Delta_\Bbbk(Q),$ where $\Delta_\Bbbk(Q)$ is the augmentation ideal of $\Bbbk[Q].$ Consequently any non-zero nilpotent element of $\Bbbk[Q]$ is of index $2.$
    \end{thm}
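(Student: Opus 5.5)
Since $Q$ is trivial, the product in $\Bbbk[Q]$ can be written in closed form. For $x=\sum_i \alpha_i e_{x_i}$ and $y=\sum_j \beta_j e_{y_j}$ we have $e_{x_i}e_{y_j}=e_{x_i\ast y_j}=e_{x_i}$, and therefore
$$xy=\Big(\sum_j\beta_j\Big)\sum_i\alpha_i e_{x_i}=\epsilon(y)\,x .$$
The whole proof will rest on this formula. Note that powers have to be read with some bracketing, since $\Bbbk[Q]$ is non-associative. However, $\Bbbk[Q]$ is associative when $Q$ is trivial: $(xy)z=\epsilon(z)\epsilon(y)x=x(yz)$, because $\epsilon(yz)=\epsilon(z)\epsilon(y)$. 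So $x^n$ is unambiguous, and I will record this at the start.

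\emph{Sufficiency.} If $x\in\Delta_\Bbbk(Q)$, then $x^2=\epsilon(x)x=0$, so $x$ is nilpotent, of index $2$ when $x\neq0$. This is also the statement $\Delta_\Bbbk^2(Q)=\{0\}$ from \cite[Theorem 3.5]{BPS-2019}, already used in Theorem~\ref{thm 5.2}.

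\emph{Necessity.} Suppose $x^n=0$ for some $n\ge1$. Applying the formula repeatedly, or by induction on $n$, gives $x^n=\epsilon(x)^{n-1}x$. If $x=0$, then $x\in\Delta_\Bbbk(Q)$ trivially. Otherwise some coefficient $\alpha_i\neq0$. Comparing the coefficient of $e_{x_i}$ gives $\epsilon(x)^{n-1}\alpha_i=0$ in $\Bbbk$. Since $\Bbbk$ is an integral domain, this forces $\epsilon(x)^{n-1}=0$, and then $\epsilon(x)=0$. (The case $n=1$ means $x=0$ and is covered above.) Alternatively, one can argue more cleanly that $\epsilon$ is a ring homomorphism, so $\epsilon(x)^n=\epsilon(x^n)=0$, and hence $\epsilon(x)=0$ because $\Bbbk$ has no nonzero nilpotents. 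Either way $x\in\Delta_\Bbbk(Q)$.

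\emph{The index statement.} By the necessity argument, every nonzero nilpotent $x$ lies in $\Delta_\Bbbk(Q)$. By the sufficiency argument it then satisfies $x^2=0$, while $x^1=x\neq0$, so its index is exactly $2$. I do not expect a real obstacle. The only points needing care are making sure powers are well defined in the possibly non-associative setting, which the associativity check settles, and using that $\Bbbk$ has no zero divisors to pass from $\epsilon(x)^{n}=0$ to $\epsilon(x)=0$.
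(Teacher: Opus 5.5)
Your proof is correct and is essentially the paper's argument. The paper writes $x=\alpha x_0+\delta$ with $\delta\in\Delta_\Bbbk(Q)$ and computes $x^k=\alpha^k x_0+\alpha^{k-1}\delta$, which is exactly your identity $x^k=\epsilon(x)^{k-1}x$ because $\alpha=\epsilon(x)$; it then uses that $\Bbbk$ is an integral domain to force $\alpha=0$. Your explicit check that $\Bbbk[Q]$ is associative for trivial $Q$, so that powers are unambiguous, is a small point the paper leaves implicit.
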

    \begin{proof}
        Any element $x\in \Bbbk[Q]$ can be expressed as $x=\alpha x_0+\delta,$ where $\alpha\in \Bbbk,\,x_0\in Q~\text{is fixed}$ and $\delta\in\Delta_\Bbbk(Q).$ Computing the powers of $x$ and using the fact that $\delta^2=0,\,x_0\delta=0$ and $\delta x_0=\delta,$ we get
        \[
        \begin{split}
            x^2&=\alpha^2x_0+\alpha\delta,\cr
            x^3&=\alpha^3x_0+\alpha^2\delta,\cr
            x^4&=\alpha^4x_0+\alpha^3\delta.\cr  
        \end{split}
        \]
        In general, we have $x^k=\alpha^kx_0+\alpha^{k-1}\delta.$ Therefore, $x^k=0$ if and only if $\alpha^k=0=\alpha^{k-1}.$ Since $\Bbbk$ is an integral domain, we must have $\alpha=0,$ yielding $x=\delta\in\Delta_\Bbbk(Q).$ Thus the proof follows.
    \end{proof}
The following proposition presents a result on nilpotent elements of the extended quandle ring.
\begin{prop}\label{prop 5.5}
    Let $a$ be a nilpotent element of the extended quandle ring $S$ of $\Bbbk[Q],$ where $Q$ is a quandle of order $>1.$ Then $a\in \Bbbk[Q].$
    \end{prop}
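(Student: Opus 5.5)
We show that a nilpotent $a=u+\gamma\in S$, with $u\in\Bbbk[Q]$ and $\gamma\in\Bbbk$, must have $\gamma=0$. The tool is a ring homomorphism that sends $a$ to $\gamma$ in the integral domain $\Bbbk$.

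\textbf{Step 1: the projection.} Since $\Bbbk[Q]$ is a two-sided ideal of $S$, we have $S/\Bbbk[Q]\cong\Bbbk$. The projection $\pi\colon S\to\Bbbk$, $\pi(u+\gamma)=\gamma$, is therefore a ring homomorphism. To check this directly, take $a=u+\gamma$ and $b=u'+\gamma'$. Because $e$ acts as the identity on $\Bbbk[Q]$ and $\Bbbk$ is central,
$$ab=uu'+\gamma' u+\gamma u'+\gamma\gamma'.$$
The first three terms lie in $\Bbbk[Q]$, so $\pi(ab)=\gamma\gamma'=\pi(a)\pi(b)$.

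\textbf{Step 2: powers.} Since $S$ is non-associative, "$a^n$" needs a convention. I will fix left-normed powers, $a^{k+1}=a^k a$. Any other bracketing works the same way, since $\pi$ is multiplicative on arbitrary products. By induction, $\pi(a^n)=\gamma^n$ for every bracketing of the $n$-fold product.

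\textbf{Step 3: conclusion.} If $a^n=0$, then $\gamma^n=\pi(a^n)=0$. Since $\Bbbk$ is an integral domain, $\gamma=0$, and hence $a=u\in\Bbbk[Q]$.

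The main obstacle is making sure the splitting $S=\Bbbk[Q]\oplus\Bbbk e$ is a genuine direct sum. This requires that $e$ is not identified with any element of $\Bbbk[Q]$. The definition of $S$ guarantees this, because $e\notin Q$ and the sum is formally direct, even though a quandle ring never has a unity when $|Q|>1$. The hypothesis that $|Q|>1$ appears to be needed only for this point. When $|Q|=1$, the element $x-e$ is a nonzero element with $x\cdot(x-e)=0$, but this is harmless to the argument.
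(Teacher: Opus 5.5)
Your proof is correct and uses the same idea as the paper, which simply asserts that $a^k\neq 0$ whenever the constant term $m$ of $a$ is nonzero. Your projection $\pi\colon S\to\Bbbk$, whose kernel is the ideal $\Bbbk[Q]$, supplies the justification the paper omits: every bracketing of the $k$-fold product has constant term $m^k\neq 0$ in the domain $\Bbbk$, and, as your closing remark suggests, the hypothesis $|Q|>1$ plays no role because $S=\Bbbk[Q]\oplus\Bbbk e$ is a formal direct sum in all cases.
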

    \begin{proof}
        Consider an element $a\in S\setminus \Bbbk [Q]$. Then we can write $a=\alpha_0 X_0+\alpha_1X_2+\cdots + \alpha_{n-1}X_n+m$, where $m$ is a nonzero element in $\Bbbk$. Clearly, $a^k\neq 0$, for any $k\geq 1$. This proves that any nilpotent element in $S$ must be in $\Bbbk[Q]$. 
    \end{proof}

    We conclude this section with a theorem related to the nil clean property of quandle rings.
    \begin{thm}
        Let $Q$ be a trivial quandle and $S$ be the extended quandle ring of $\Bbbk[Q].$ Then neither $\Bbbk[Q]$  nor $S$ is nil clean.
    \end{thm}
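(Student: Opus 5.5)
The approach is to use augmentation to obstruct nil cleanness. Every nilpotent element has augmentation $0$, and every idempotent has augmentation $0$ or $1$. So any nil clean element $r$ satisfies $\epsilon(r)\in\{0,1\}$ (respectively $\epsilon^o(r)\in\{0,1\}$). It then suffices to exhibit an element whose augmentation lies outside $\{0,1\}$.

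\textbf{Step 1 (the quandle ring).} I would first classify the idempotents of $\Bbbk[Q]$ for $Q$ trivial. Fix $x_0\in Q$ and write $u=\alpha x_0+\delta$ with $\delta\in\Delta_\Bbbk(Q)$. As in the proof of Theorem~\ref{thm 5.4}, the relations $\delta^2=0$, $x_0\delta=0$ and $\delta x_0=\delta$ give $u^2=\alpha^2x_0+\alpha\delta$. Hence $u^2=u$ forces $\alpha^2=\alpha$ and $\alpha\delta=\delta$. Since $\Bbbk$ is a domain, either $\alpha=1$ (with $\delta$ arbitrary), or $\alpha=0$ and then $\delta=0$. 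So the idempotents, including $0$, are exactly $0$ and the elements of augmentation $1$. By Theorem~\ref{thm 5.4}, the nilpotents are exactly $\Delta_\Bbbk(Q)$. Therefore a nil clean element $u+b$ has $\epsilon(u+b)=\epsilon(u)\in\{0,1\}$. Now take $r=\lambda x_0$ with $\lambda\in\Bbbk\setminus\{0,1\}$, for instance $\lambda=2$ when $\operatorname{char}\Bbbk\neq 2$, or $\lambda=-1$ when $\operatorname{char}\Bbbk\ne 2$. Then $\epsilon(r)=\lambda\notin\{0,1\}$, so $r$ is not nil clean.

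\textbf{Step 2 (the extended ring $S$).} By Proposition~\ref{prop 5.5}, every nilpotent of $S$ lies in $\Bbbk[Q]$, and by Theorem~\ref{thm 5.4} it then lies in $\Delta_\Bbbk(Q)$, so $\epsilon^o$ kills it. Alternatively, this follows directly because $\epsilon^o$ is a ring homomorphism onto the domain $\Bbbk$. Since $\epsilon^o$ is a homomorphism, any idempotent $w\in S$ has $\epsilon^o(w)^2=\epsilon^o(w)$ in $\Bbbk$, so $\epsilon^o(w)\in\{0,1\}$. Hence nil clean elements of $S$ have $\epsilon^o\in\{0,1\}$. The element $\lambda e$ (or $\lambda x_0$), with $\lambda\notin\{0,1\}$, is therefore not nil clean.

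\textbf{Main obstacle.} The delicate point is the existence of $\lambda\in\Bbbk\setminus\{0,1\}$. This holds for every integral domain except $\Bbbk=\mathbb{F}_2$. For $\Bbbk=\mathbb{F}_2$ the augmentation argument collapses. In fact, by Step~1 every element of $\mathbb{F}_2[Q]$ is then either an idempotent (augmentation $1$) or lies in $\Delta$ and is nilpotent, so $\mathbb{F}_2[Q]$ appears to be nil clean. I would therefore state and prove the theorem under the hypothesis $\Bbbk\neq\mathbb{F}_2$, or at least $|\Bbbk|>2$. For $S$ over $\mathbb{F}_2$, I would separately check whether some element such as $x_0+x_1+e$ fails to decompose, by listing idempotents via the computation $(\alpha x_0+\delta+\gamma)^2$ in the style of Theorem~\ref{thm 5.2}. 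I expect this to be the only step requiring genuine extra care.
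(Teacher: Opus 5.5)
Your proof is correct for every integral domain $\Bbbk\neq\mathbb{F}_2$, and it takes a different route from the paper.

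The paper works from an explicit description of idempotents. It takes the (nonzero) idempotents of $\Bbbk[Q]$ to be exactly the elements $x_0+\delta$. It notes that adding a nilpotent (an element of $\Delta_\Bbbk(Q)$, by Theorem~\ref{thm 5.4}) to such an element gives another element of that form, and concludes that the nil clean elements are just these idempotents. For $S$ it uses Proposition~\ref{prop 5.5} to confine nilpotents to $\Delta_\Bbbk(Q)$ and reduces to the first part.

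You use only that $\epsilon$ and $\epsilon^o$ are ring homomorphisms onto a domain, so every nil clean element has augmentation $0$ or $1$. This has three consequences:
\begin{itemize}
\item Your Step~1 classification of idempotents is superfluous: $\epsilon(u)^2=\epsilon(u)$ suffices, exactly as in your Step~2.
\item You need neither Theorem~\ref{thm 5.4} nor Proposition~\ref{prop 5.5}.
\item You never use that $Q$ is trivial, so the same conclusion holds for an arbitrary quandle $Q$.
\end{itemize}
One small slip: both of your sample values of $\lambda$ require $\operatorname{char}\Bbbk\neq 2$. In characteristic $2$ with $|\Bbbk|>2$, simply take any $\lambda\notin\{0,1\}$.

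Your caveat about $\mathbb{F}_2$ is genuine, and it is exactly where the paper's argument is fragile. The paper says that a sum of an idempotent and a nilpotent is always an idempotent. That holds only if $0$ is excluded from the idempotents, as in the paper's introductory definition. With the standard convention, $0+\delta=\delta$ is nil clean, the nil clean elements of $\Bbbk[Q]$ are precisely those of augmentation $0$ or $1$, and so $\mathbb{F}_2[Q]$ is nil clean. With the literal nonzero convention, $0$ is never nil clean in any nonzero ring, so the statement becomes vacuous.

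The case you left open, $S$ over $\mathbb{F}_2$, also goes against the statement. Your test element decomposes as $x_0+x_1+e=e+(x_0+x_1)$. In general, solving $(\alpha x_0+\delta+\gamma)^2=\alpha x_0+\delta+\gamma$ (with $\gamma\in\{0,1\}$) gives the idempotents $0$, $e$, $x_0+\delta$ and $e-x_0+\delta$. Over $\mathbb{F}_2$ these meet all four cosets of the square-zero ideal $\Delta_{\mathbb{F}_2}(Q)$ in $S$, so $S$ is nil clean too.

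So the correct statement is yours: for trivial $Q$ and an integral domain $\Bbbk$, neither ring is nil clean if and only if $\Bbbk\neq\mathbb{F}_2$.
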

    \begin{proof}
        For a trivial quandle $Q,$ it is known that any idempotent of $\Bbbk[Q]$ is of the form $x_0+\delta,$ where $x_0\in Q$ is a fixed element and $\delta\in\Delta_\Bbbk(Q)$ is arbitrary \cite[Proposition 4.1]{BPS-2022}. Therefore the sum of an idempotent and nilpotent element is of the form $s=x_0+\delta_1+\delta_2.$ As $\Delta_\Bbbk(Q)$ is an ideal, so $s=x_0+\delta_3,$ where $\delta_3=\delta_1+\delta_2\in\Delta_\Bbbk(Q).$ It follows that the sum of an idempotent and a nilpotent in $\Bbbk[Q]$ is always an idempotent and hence $\Bbbk[Q]$ is not a nil-clean ring.

        Let $a$ be a nilpotent element in $S$. Then by Proposition~\ref{prop 5.5} and Theorem~\ref{thm 5.4}, $a\in\Delta_\Bbbk(Q).$ The sum of an element in $\Delta_\Bbbk(Q)$ and an element in $S\setminus \Bbbk[Q]$ is clearly in $S\setminus \Bbbk[Q]$, and therefore an element in $\Bbbk[Q]$ cannot be written as a sum of a nilpotent and an idempotent (in $S\setminus \Bbbk[Q]$, if any). Clearly, every element in $\Bbbk[Q]$ is not an idempotent. Thus there are elements in $\Bbbk[Q]$,  and therefore in $S$, that cannot be written as a sum of a nilpotent and an idempotent. Thus, $S$ cannot be a nil-clean ring.
    \end{proof}

We now focus on the extended quandle ring of the Joyce quandle where the ground ring is an integral domain. The multiplication table of the Joyce quandle of order 3 is given by

\[
\begin{tabular}{c | c c c}
$\ast$ & $e_0$ & $e_1$ & $e_2$  \\
\cline{1-4}
$e_0$& $e_0$ & $e_o$ & $e_0$ \\
$e_1$& $e_2$ & $e_1$ & $e_1$ \\
$e_2$& $e_1$ & $e_2$ & $e_2$ \\
\end{tabular}
\]

Let $Q$ be the Joyce quandle of order 3 and $\Bbbk$ be an integral domain. For the rest of this section, we study units of the extended quandle ring of $\Bbbk[Q]$. 

We point out to the reader that there are no units in the regular quandle ring. Let $a=\alpha_0e_0+\alpha_1e_1+\alpha_2e_2+\gamma_1$ be a unit. Then there exists a nonzero element $b=\beta_0e_0+\beta_1e_1+\beta_2e_2+\gamma_1$ such that 
$$(\alpha_0e_0+\alpha_1e_1+\alpha_2e_2+\gamma_1)(\beta_0e_0+\beta_1e_1+\beta_2e_2+\gamma_2)=e$$
and 
$$(\beta_0e_0+\beta_1e_1+\beta_2e_2+\gamma_2)(\alpha_0e_0+\alpha_1e_1+\alpha_2e_2+\gamma_1)=e.$$

From the first equation, we get the following system of equations:
\begin{equation}\label{EQ6.4}
\begin{cases} 
\alpha_0\beta_0+\alpha_0\beta_1+\alpha_0\beta_2+\alpha_0\gamma_2+\gamma_1\beta_0=0,\\
\alpha_1\beta_1+\alpha_1\beta_2+\alpha_1\gamma_2+\alpha_2\beta_0+\gamma_1\beta_1=0,\\
\alpha_1\beta_0+\alpha_2\beta_1+\alpha_2\beta_2+\alpha_2\gamma_2+\gamma_1\beta_2=0,\\
\gamma_1\gamma_2=e.
\end{cases}
\end{equation} 

From the second equation, we get the following system of equations:
\begin{equation}\label{EQ6.5}
\begin{cases} 
\beta_0\alpha_0+\beta_0\alpha_1+\beta_0\alpha_2+\beta_0\gamma_1+\gamma_2\alpha_0=0,\\
\beta_1\alpha_1+\beta_1\alpha_2+\beta_1\gamma_1+\beta_2\alpha_0+\gamma_2\alpha_1=0,\\
\beta_1\alpha_0+\beta_2\alpha_1+\beta_2\alpha_2+\beta_2\gamma_1+\gamma_2\alpha_2=0,\\
\gamma_1\gamma_2=e.
\end{cases}
\end{equation} 

Since $a$ and $b$ are nonzero, not all $\alpha_i$'s and $\beta_j$'s are zero. Then $a\in S$ is a unit if and only if the equations \eqref{EQ6.4} and \eqref{EQ6.4} are both solvable. 

Let $\Bbbk=\mathbb{F}_2$. We find the units in the extended quandle ring of $\mathbb{F}_2[Q]$, where $Q$ is the Joyce quandle. 

From equations \eqref{EQ6.4} and \eqref{EQ6.5}, we get $\gamma_1=\gamma_2=1$, and 
\begin{equation}\label{EQ6.6}
\begin{cases}
\alpha_0(\beta_1+\beta_2)=\beta_0(\alpha_1+\alpha_2),\\
(\alpha_1-\alpha_0)\beta_2=\alpha_2(\beta_1-\beta_0),\\
(\alpha_2-\alpha_0)\beta_1=(\beta_2-\beta_0)\alpha_1.\\
\end{cases}
\end{equation} 

If $\alpha_0=0$, we have $\beta_0=0$ or $\alpha_1=\alpha_2$. Assume that $\beta_0=0$. then from the second and third equations, we have $\alpha_1\beta_2=\alpha_2\beta_1$. If $\alpha_1\beta_2=\alpha_2\beta_1=1$, then it is easy to see that $(e_1+e_2+1)$ is a unit, which is in fact a self-inverse. If $\alpha_1\beta_2=\alpha_2\beta_1=0$, then equation 2 in \eqref{EQ6.4} gives $\alpha_1(1+\beta_1)+\beta_1=0$. if $\beta_1=1$, then we also get $\beta_1=0$, which is a contradiction. If $\beta_1=0$, we have  $\beta_1=0$. Using $\alpha_0=\beta_0=\beta_1=\beta_1=0$ in the third equation in \eqref{EQ6.4} we get $\alpha_2(1+\beta_2)+\beta_2=0$. If $\beta_2=1$, we also get $\beta_2=0$, which is a contradiction. If $\beta_2=0$, then $\alpha_2=0$. This means the only unit in this case is 1.  

If $\alpha_1=\alpha_2$, then the last two equations in \eqref{EQ6.6} gives us

$$(\alpha_1-\alpha_0)\beta_2=\alpha_1(\beta_1-\beta_0),$$
$$(\alpha_1-\alpha_0)\beta_1=\alpha_1(\beta_1-\beta_0),$$

which leads to $\alpha_1=0$ or $\beta_0+\beta_1+\beta_2=0$. Clearly, $\alpha_1\neq 0$ because otherwise $\alpha_0=\alpha_1=\alpha_2=0$, which is a contradiction. 

The equation $\beta_0+\beta_1+\beta_2=0$ leads to the triples $(0,0,0),(1,1,0),(1,0,1),(0,1,1)$, which correspond to the elements $0$, $e_0+e_1+1$, $e_0+e_2+1$, and $e_1+e_2+1$, respectively. Clearly,  
$$(e_0+e_1+1)(e_0+e_2+1)=1=(e_0+e_2+1)(e_0+e_1+1)$$
and 
$$(e_1+e_2+1)(e_1+e_2+1)=1.$$
Now let $\alpha_0=1$. Then from the first equation in \eqref{EQ6.4}, we get $\beta_1+\beta_2=1$. Substituting $1$ for $\beta_1+\beta_2$ in \eqref{EQ6.6}, we get $\beta_0(\alpha_1+\alpha_2)=1$. This implies $\beta_0=1$ and $\alpha_1+\alpha_2=1$, which correspond to the elements $e_0+e_1+1$ and $e_0+e_2+1$. 
We have shown that there are only four units in the extended quandle ring $S$, where the quandle $Q$ is the Joyce quandle of order 3 and the ground ring is the finite field $\mathbb{F}_2$. They are $1$,$e_0+e_1+1$, $e_0+e_2+1$, and $e_1+e_2+1$.      

\section{Zero-divisor graphs of quandle rings}\label{S6}

In this section we present some results on zero-divisor graphs of extended quandle rings.  A wealth of theory on zero-divisor graphs can be found in \cite{AABC-2021}, but we present the necessary background here.    

A graph $\Gamma = (V, E)$ is an ordered pair of two sets, where $V$ is called the {\it vertex set} (whose elements are called the {\it vertices} of the graph) and $E$ is called the {\it edge set} (whose elements are called the {\it edges} of the graph). $\Gamma = (V, E)$ is a {\it simple} graph if $E \subseteq \binom{V}{2}$, that is, each $e \in E$ is of the form $e = \{u,v\}$ for some $u \neq v \in V$, and we say $u$ and $v$ are {\it adjacent}. For the most part we will focus on non-simple graphs here.  An edge $e \in E$ of the form $e = \{u\}$ is called a {\it loop} at the vertex $u$ (so a loop ``joins a vertex to itself").  A {\it directed graph} (or {\it digraph}) $\Gamma = (V, E)$ (sometimes denoted $\vec{\Gamma} = (V, \vec{E})$) is a graph where edges are ordered pairs of vertices, and if $e = (u,v) \in E$, we say there is an edge from $u$ to $v$; we also say the the edge goes out from $u$ and in to $v$. A directed loop is of the form $e = (u, u)$.  

For a simple graph $\Gamma = (V, E)$ and for $u \in V$, we define the {\it degree} of $u$ by $\deg(u) = \#\{e \in E : u \in e\}$.  For a directed graph $\Gamma = (V, E)$ and for $u \in V$, we define the {\it out-degree} of $u$ by $\deg^+(u) = \#\{v \in V : (u,v) \in E\}$, that is, the number of edges which start at $u$. Similarly, we define the {\it in-degree} of $u$ by $\deg^-(u) = \#\{v \in V : (v,u) \in E\}$, that is, the number of edges which terminate at $u$. Note that a directed loop $(u,u) \in E$ is counted in the out-degree and in-degree of $u$. Note also that 

\begin{equation}
\sum_{u \in V}\deg^+(u) = \sum_{u \in V}\deg^-(u) = |E|
\end{equation}

\noindent but this tells us little information about the directed graph.  

Let $R$ denotes a commutative ring.  The {\it zero-divisor graph} of $R$, denoted $\Gamma(R)$, is the graph with vertex set $Z(R)$, the set of zero-divisors of $R$ and two vertices $u, v$ are adjacent if $uv = 0$.  If $R$ is a non-commutative ring, then the zero-divisor graph, sometimes denoted $\vec{\Gamma}(R)$, is a directed graph with vertex set $ Z(R)$ and $(u,v)$ is a directed edge from $u$ to $v$ if $uv = 0$. Note that in a zero-divisor graph, a loop at $u$ corresponds to $u^2 = 0,$ which implies that $u$ nilpotent of index $2.$
%\vskip 0.1 in
\noindent Unlike group rings when the group is trivial, quandle rings when the quandle is trivial seems to be an area that requires a thorough study as evident in this section. 

\subsection{Zero-divisor graph of the extended quandle ring of a trivial quandle}

Let us consider the zero-divisor graph of the extended quandle ring $S$ of $\mathbb{F}_2[Q],$ where $Q=\{x_0,x_1\}$ is the trivial quandle of order $2.$ Then
$$S = \{0, 1, x_0,x_1, x_0+1,x_1+1,x_0+x_1, x_0+x_1+1\}$$ and its zero divisor graph  is shown below.

%\begin{minipage}{0.58\textwidth}
\begin{center}

\begin{tikzpicture}[
    >=Latex,
    edge/.style={
    draw,
    thick,
    postaction={decorate},
    decoration={
        markings,
        mark=at position 0.35 with {\arrow{Latex}}
    }
},
]

% Vertices

\coordinate (A) at (0,0);
\coordinate (B) at (2,2);
\coordinate (C) at (4,0);
\coordinate (D) at (1,-2);
\coordinate (E) at (3,-2);

% Solid points
\fill (A) circle (2.2pt);
\fill (B) circle (2.2pt);
\fill (C) circle (2.2pt);
\fill (D) circle (2.2pt);
\fill (E) circle (2.2pt);

% Labels
\node[above] at (A) {$x_0$};
\node[above] at (B) {$x_1$};
\node[right] at (C) {$x_0+1$};
\node[left] at (D) {$x_0+x_1$};
\node[right] at (E) {$x_1+1$};

% Directed edges
\draw[edge] (D) to[out=-40,in=-120, looseness=10,out distance=2cm,in distance=2cm]  (D);
\draw[edge,bend right=20]  (A) to (C);
\draw[edge] (C) to (A);
\draw[edge] (A) to (D);
\draw[edge] (A) to (E);
\draw[edge] (B) to (D);
\draw[edge] (B) to (E);
\draw[edge,bend left=12] (E) to (B);
\draw[edge] (D) to (C);
\draw[edge] (D) to (E);
\draw[edge] (B) to (C);
\end{tikzpicture}
\captionof{figure}{Zero-divisor graph of the extended quandle ring of the trivial quandle of order 2}
\label{fig1}
\end{center}
%\end{minipage}%
\hfill % Adds space between them
%\begin{minipage}{0.38\textwidth}
\begin{center}
    \begin{tabular}{|c|c|c|}
        \hline
        Vertex & In-degree & Out-degree \\
        \hline
        $x_0$ & 1 & $2^2-1$ \\
        $x_1$  & 1 & $2^2-1$ \\
         $x_0+1$  & $2^2-1$ & 1 \\
          $x_1+1$  & $2^2-1$ & 1 \\
           $x_0+x_1$  & $2^2-1$ & $2^2-1$ \\
        \hline
    \end{tabular}
    \end{center}
\captionof{table}{In-degree and out-degree table of Figure~\ref{fig1}}
\label{tab1}
%\end{minipage} 

\vskip 0.2in

We now compute the zero divisors of the extended quandle ring of a trivial quandle whose ground ring is $\mathbb{F}_2$. 

%From the indegree-outdegree tables of the graphs associated with the extended trivial quandle ring and extended Joyce quandle ring over $\mathbb{F}_2$ (Table~\ref{tab2},Table~\ref{tab3}), a remarkable mirror symmetry was observed. More precisely, whenever there exists a vertex $x$ (so $x$ is a zero divisor) with in-degree $i$ and out-degree $o$, there also exists a vertex (zero divisor) with in-degree $o$ and out-degree $i$ and the sum of these two corresponding elements in the quandle ring is never a zero divisor. This observation motivated us to study the mirror symmetry property theoretically. In fact, we prove that this mirror symmetry holds for the graph associated with the extended quandle ring of $\Bbbk[Q]$ whenever $Q$ is a finite trivial quandle and $\Bbbk=\mathbb{F}_2.$

\vskip 0.1in

%%%%%%%%
    It is known that in a finite ring with unity, every nonzero element is either a zero divisor or a unit but never both. The following proposition follows from Example~\ref{exmp 7.3}.
    \begin{prop}
         Let $\Bbbk=\mathbb{F}_2$ and $Q$ be a trivial quandle of finite order. Then the set of zero divisors $Z$ of the extended quandle ring of $\Bbbk[Q]$ is $$Z=\{u\in S\setminus\{0\}\mid u\neq \delta+1\},$$ where $\delta\in\Delta_\Bbbk(Q).$
    \end{prop}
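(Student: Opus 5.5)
The plan is to combine the finite-ring dichotomy quoted just before the statement with the description of units in Example~\ref{exmp 7.3}. When $Q$ is a finite trivial quandle and $\Bbbk=\mathbb{F}_2$, the extended quandle ring $S$ is finite, with $|S|=2^{|Q|+1}$. It has unity $e=1$. It is also associative: for a trivial quandle the product is $(\sum\alpha_ix_i)(\sum\beta_jx_j)=(\sum_j\beta_j)\sum_i\alpha_ix_i$, so the quandle ring is the associative ring $\Bbbk[Q]$ with right multiplication by the augmentation, and adjoining $1$ keeps associativity.

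\textbf{Step 1: every nonzero element is a unit or a zero divisor.} I will argue this directly rather than only cite it. Let $u\neq 0$ and consider left multiplication $L_u\colon S\to S$, $s\mapsto us$. This map is $\mathbb{F}_2$-linear.
\begin{itemize}
\item If $L_u$ is not injective, some nonzero $v$ satisfies $uv=0$, so $u$ is a (left) zero divisor.
\item If $L_u$ is injective, then by finiteness it is bijective. So there is $v$ with $uv=1$.
\end{itemize}
Next I apply the same argument to the right multiplication $R_u$. If $R_u$ is not injective, $u$ is again a zero divisor. Otherwise there is $w$ with $wu=1$. Associativity then gives $w=w(uv)=(wu)v=v$, so $u$ is a unit.

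I also check that a unit is never a zero divisor. If $uv=1$ and $us=0$, then $s=(vu)s=v(us)=0$, and the argument for $su=0$ is symmetric. So the nonzero elements split exactly into units and zero divisors. This is the only point where care is needed, since the paper's rings are in general non-associative; for trivial $Q$ the associativity remark above settles it.

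\textbf{Step 2: identify the units.} By Example~\ref{exmp 7.3}, which rests on Theorem~\ref{thm 5.2}, the units of $S$ are exactly the elements $\delta+1$ with $\delta\in\Delta_\Bbbk(Q)$. As a sanity check, $(\delta+1)^2=\delta^2+2\delta+1=1$, using $\Delta^2_\Bbbk(Q)=0$ and characteristic $2$. So each such element is its own inverse.

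\textbf{Step 3: conclude.} Combining Steps 1 and 2, the zero divisors are precisely the nonzero elements of $S$ not of the form $\delta+1$, which is the set $Z$ in the statement.

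The only genuine obstacle is justifying the unit/zero-divisor dichotomy in this setting, because $S$ is not a priori associative or commutative. I handle it by verifying associativity for trivial $Q$ and running both the left and right injectivity arguments.
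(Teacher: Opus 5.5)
Your proof is correct and follows the paper's route: the paper invokes the dichotomy ``every nonzero element of a finite ring with unity is a unit or a zero divisor'' and then reads off the units $\delta+1$ from Example~\ref{exmp 7.3}. Your explicit check that $S$ is associative when $Q$ is trivial, together with the left/right injectivity argument, is a worthwhile addition: the dichotomy is normally stated for associative rings, and the paper applies it here without comment even though its quandle rings are in general non-associative.
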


    \begin{rmk}\label{rmk6.2}
     If $ u=\alpha x+\delta+\gamma\in Z,$ then we must have one of the following:
    \begin{enumerate}
        \item 
        $\alpha=0,\,\gamma=0,\,(u=\delta\setminus\{0\}).$
        \item 
        $\alpha=1,\,\gamma=0\,(u=x+\delta).$
        \item 
        $\alpha=1,\,\gamma=1\,(u=x+\delta+1).$
    \end{enumerate}
  
    In particular, for the trivial quandle $Q_3=\{x_0,\,x_1\,x_2\}$ of order $3,$ $$Z=\{x_0+x_1,\,x_0+x_2,\,x_1+x_2,\,x_0,\,x_1\,x_2,\,x_0+x_1+x_2,\,x_0+1,\,x_1+1,\,x_2+1,\,x_0+x_1+x_2+1\}.$$
    \end{rmk} 

More generally, we have the following theorem, which follows from Theorem~\ref{thm 5.2}.
\begin{thm}\label{thm 7.5}
    Let $\mathbb{F}$ be a finite field with unity $e,$ $Q$ be a finite order trivial quandle and $S$ be the extended quandle ring of $\mathbb{F}[Q].$ An element $u=\alpha x+\delta+\gamma\in S,$ where $\alpha,\gamma\in \mathbb{F},\,x\in Q,\,\delta\in\Delta_\mathbb{F}(Q)$ is a zero divisor if and only if the following system of equations is inconsistent over $\mathbb{F}$ for any choices of $(\alpha_1,\,\delta_1,\,\gamma_1)\in(\mathbb{F},\,\Delta_\mathbb{F}(Q),\,\mathbb{F}).$
    \begin{equation}  \label{eqn7.4}
    \begin{array}{rcccl}
    \alpha\alpha_1+\gamma_1\alpha+\gamma\alpha_1&=& 0 & &\\
    \alpha_1\delta+\gamma_1\delta+\gamma\delta_1&=& 0 & &\\
    %&=&\alpha\delta_1+\gamma_1\delta+\delta_1\gamma\\
    \alpha_1\delta &=&\alpha\delta_1 & &\\
    \gamma\gamma_1&=& e .& &
    \end{array}
\end{equation}
\end{thm}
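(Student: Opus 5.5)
The plan is to combine the unit characterization of Theorem~\ref{thm 5.2} with the standard dichotomy for finite rings with unity. Since $\mathbb{F}$ and $Q$ are finite, $S=\mathbb{F}[Q]\oplus\mathbb{F}e$ is finite. For a trivial quandle the product on $\mathbb{F}[Q]$ is $x\ast y=x$, so $S$ is associative. It is therefore a finite associative ring with unity $e$, and every nonzero element of $S$ is either a unit or a zero divisor, but never both. This is the fact quoted just before the preceding proposition.

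First I will justify this dichotomy directly, so that associativity is used explicitly. For nonzero $u\in S$, consider left multiplication $L_u\colon S\to S$, $s\mapsto us$, which is additive.
\begin{itemize}
\item If $u$ is not a left zero divisor, then $L_u$ is injective, hence bijective because $S$ is finite. So there is $v$ with $uv=e$.
\item If $u$ is also not a right zero divisor, then the same argument applied to right multiplication $R_u$ gives $w$ with $wu=e$.
\item Associativity then gives $w=w(uv)=(wu)v=v$, so $u$ is a two-sided unit.
\end{itemize}
Conversely, a unit cannot be a zero divisor: if $us=0$ with $u$ invertible, then $s=v(us)=0$ by associativity. Hence a nonzero $u$ is a zero divisor if and only if it is not a unit.

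Second, I will invoke Theorem~\ref{thm 5.2}, with $\Bbbk=\mathbb{F}$ (a field, hence an integral domain). It says that $u=\alpha x+\delta+\gamma$ is a unit exactly when there exist $(\alpha_1,\delta_1,\gamma_1)$ satisfying system~\eqref{eqn7.3}, which is identical to \eqref{eqn7.4}, together with $\alpha+\gamma,\ \alpha_1+\gamma_1\in U$.

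The condition $\alpha_1+\gamma_1\in U$ is not listed in \eqref{eqn7.4}, so I will show it is automatic. Any solution of the system yields $v=\alpha_1x+\delta_1+\gamma_1$ with $uv=vu=e$; this is how the equations were derived in the proof of Theorem~\ref{thm 5.2}. Applying the ring homomorphism $\epsilon^o$ gives $(\alpha+\gamma)(\alpha_1+\gamma_1)=1$, so both augmentations are nonzero and hence units in the field $\mathbb{F}$. Thus $u$ is a unit if and only if \eqref{eqn7.4} is consistent for some choice of $(\alpha_1,\delta_1,\gamma_1)$. Negating, a nonzero $u$ is a zero divisor if and only if the system is inconsistent for every choice, which is the statement.

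The main obstacle is this bookkeeping: checking that the equations of Theorem~\ref{thm 5.2} really encode $uv=vu=e$ without the extra unit condition. I also need to note that $u=0$ must be excluded, and that the decomposition $u=\alpha x+\delta+\gamma$ with a fixed $x$ covers all of $S$.
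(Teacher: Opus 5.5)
Your proposal is correct and follows the same route as the paper, which deduces this statement from Theorem~\ref{thm 5.2} together with the fact that in a finite ring with unity every nonzero element is either a unit or a zero divisor. Your additions make explicit what the paper leaves implicit: you verify the dichotomy using associativity of $S$, and you note that the system encodes $uv=vu=e$, so $\alpha_1+\gamma_1\in U$ is automatic (indeed the first and last equations give $(\alpha+\gamma)(\alpha_1+\gamma_1)=e$ directly). You also exclude $u=0$, for which the system is trivially inconsistent even though $0$ is not a zero divisor under the paper's definition.
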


Now we present the zero-divisor graph of the extended quandle ring of the trivial quandle of order 3 and the table of the in-degrees and out-degrees of its vertices. 

\begin{center}
\begin{tikzpicture}[
    >=Latex,
    edge/.style={
    draw,
    thick,
    postaction={decorate},
    decoration={
        markings,
        mark=at position 0.35 with {\arrow{Latex}}
    }
},
]

% Vertices
\coordinate (A) at (-5,-1);
\coordinate (B) at (-6,1);
\coordinate (C) at (-6,4);
\coordinate (D) at (-4,7);
\coordinate (E) at (-1.5,9);
\coordinate (F) at (1,8);
\coordinate (G) at (3,6.5);
\coordinate (H) at (5,4);
\coordinate (I) at (5,2);
\coordinate (J) at (4,0);
\coordinate (K) at (-1,-1);
%\coordinate (L) at (2,-1);
%\coordinate (M) at (-1,-3);
%\coordinate (N) at (-3,-2);
%\coordinate (O) at (-4.5,-1);

% Solid points
\fill (A) circle (2.2pt);
\fill (B) circle (2.2pt);
\fill (C) circle (2.2pt);
\fill (D) circle (2.2pt);
\fill (E) circle (2.2pt);
\fill (F) circle (2.2pt);
\fill (G) circle (2.2pt);
\fill (H) circle (2.2pt);
\fill (I) circle (2.2pt);
\fill (J) circle (2.2pt);
\fill (K) circle (2.2pt);
%\fill (L) circle (2.2pt);
%\fill (M) circle (2.2pt);
%\fill (N) circle (2.2pt);
%\fill (O) circle (2.2pt);

% Labels
\node[left] at (A) {$x_0$};
\node[left] at (B) {$x_1$};
\node[left] at (C) {$x_2$};
\node[left] at (D) {$x_0+x_1$};
\node[above left] at (E) {$x_0+x_2$};
\node[right] at (F) {$x_1+x_2$};
\node[above right] at (G) {$x_0+x_1+x_2$};
\node[above right] at (H) {$x_0+1$};
\node[right] at (I) {$x_1+1$};
\node[right] at (J) {$x_2+1$};
\node[below] at (K) {$x_0+x_1+x_2+1$};
%\node[right] at (L) {$x_1+x_2+1$};
%\node[below] at (M) {$1$};
%\node[left] at (N) {$x_0+x_1+x_2$};
%\node[left] at (O) {$x_0+x_1+x_2+1$};

\draw[edge] (A) -- (K);
\draw[edge] (H) to[bend right=5] (A);
\draw[edge] (A) -- (J);
\draw[edge] (A) -- (I);
\draw[edge] (A) -- (F);
\draw[edge] (A) -- (H);
\draw[edge] (A) -- (E);
\draw[edge] (A) -- (D);
% from and to x_1
\draw[edge] (B) -- (K);
\draw[edge] (B) -- (J);
\draw[edge] (B) to[bend right=5] (I);
%\draw[edge] (B) to[bend right=35] (I);
\draw[edge] (B) -- (H);
\draw[edge] (I) -- (B);
\draw[edge] (B) -- (F);
\draw[edge] (B) -- (E);
\draw[edge] (B) -- (D);
% from and to x_2
\draw[edge] (C) -- (K);
\draw[edge] (C) to[bend left=5]  (J);
%\draw[edge] (J) to[bend left=35] (C);
\draw[edge] (J) -- (C);
\draw[edge] (C) -- (I);
\draw[edge] (C) -- (H);
%\draw[edge] (C) to[bend right=45] (H);
\draw[edge] (C) -- (F);
\draw[edge] (C) -- (E);
\draw[edge] (C) -- (D);
% to x_0+1
\draw[edge] (D) to[out=60,in=120, looseness=10,out distance=2cm,in distance=3cm]  (D);

\draw[edge] (D) -- (K);
\draw[edge] (D) -- (J);
\draw[edge] (D) -- (I);
\draw[edge] (D) -- (H);
\draw[edge] (F) -- (D);
\draw[edge] (D) to[bend left=25] (F);
\draw[edge] (D) to[bend left=25] (E);
\draw[edge] (E) -- (D);
% from and to x_0++x_2
\draw[edge] (E) to[out=30,in=120, looseness=10,out distance=2cm,in distance=3cm]  (E);
\draw[edge] (E) -- (K);
\draw[edge] (E) -- (J);
\draw[edge] (E) -- (I);
\draw[edge] (E) -- (H);
\draw[edge] (G) -- (E);
\draw[edge] (E) to[bend left=25] (F);
\draw[edge] (F) -- (E);
% from and to x_2+x_1
\draw[edge] (F) to[out=40,in=120, looseness=10,out distance=2cm,in distance=3cm]  (F);
\draw[edge] (F) -- (K);
\draw[edge] (F) -- (J);
\draw[edge] (F) -- (I);
\draw[edge] (F) -- (H);
\draw[edge] (G) -- (F);
%\draw[edge] (F) -- (D);
% from and to x_0+x_1+x_2
\draw[edge] (G) -- (H);
\draw[edge] (G) to[bend left=5] (I);
\draw[edge] (G) -- (J);
\draw[edge] (G) -- (K);
\draw[edge] (G) -- (D);
%\draw[edge] (K) to[bend right=35] (G);
\draw[edge] (K) to[bend right=15] (G);
\end{tikzpicture}
\captionof{figure}{Zero-divisor graph of the extended quandle ring of the trivial quandle of order 3}
\label{fig3}
\end{center}

\begin{table}[H]
    \centering
    %\caption{Sample Table Description}
    \label{tab:sample_table3}
    \begin{tabular}{|c|c|c|}
        \hline
        Vertex & In-degree & Out-degree \\
        \hline
        $x_0$ & 1 & $2^3-1$ \\
        $x_1$  & 1 & $2^3-1$ \\
         $x_2$  & 1 & $2^3-1$ \\
          $x_0+1$  & $2^3-1$ & 1 \\
           $x_1+1$  & $2^3-1$ & 1 \\
           $x_2+1$ & $2^3-1$ & 1 \\
        $x_0+x_1$  & $2^3-1$ & $2^3-1$ \\
         $x_0+x_2$  & $2^3-1$ & $2^3-1$ \\
          $x_1+x_2$  & $2^3-1$ & $2^3-1$ \\
         $x_0+x_1+x_2$  & 1 & $2^3-1$ \\
          $x_0+x_1+x_2+1$  & $2^3-1$ & 1 \\
           
        \hline
  \end{tabular}
  \captionof{table}{In-degree and out-degree table of Figure~\ref{fig3}}
\label{tab3}
\end{table}

From the indegree-outdegree tables of the zero-divisor graphs associated with the extended quandle ring of the trivial quandle over $\mathbb{F}_2$ (Table~\ref{tab1}, Table~\ref{tab3}), a remarkable mirror symmetry is observed. More precisely, for each $u \in Z(R)$ (i.e., a vertex of the zero-divisor graph),  there is $v \in Z(R)$ such that $\deg^+(u) = \deg^-(v)$ and $\deg^-(u) = \deg^+(v)$.  In this case, we say $u$ and $v$ exhibit \textit{mirror symmetry}. In fact, we also observe that for every $m, \ell \in \mathbb{N}$,

\begin{footnotesize}
   $$ \# \{u \in Z(R) : \deg^+(u) = m \ \text{and} \ \deg^-(u) = \ell \} \ = \# \{v \in Z(R) : \deg^+(v) = \ell \ \text{and} \ \deg^-(v) = m \} $$
\end{footnotesize}

Additionally, if we take any two vertices that exhibit mirror symmetry, then the sum of these two elements in the quandle ring is never a zero divisor.  These observations motivated us to study these mirror symmetry properties theoretically; in fact, we prove that the above mirror symmetry properties hold for the extended quandle ring of $\mathbb{F}_2[Q]$ whenever $Q$ is a finite trivial quandle. 

\vskip 0.1in 

Now we compute the in-degrees and out-degrees of the vertices of the zero-divisor graph of the extended quandle ring of the trivial quandle. From Remark~\ref{rmk6.2}, we know that any zero divisor of the extended quandle ring $S$ of $\mathbb{F}_2[Q],$ where $Q$ is a trivial quandle of finite order, (say $n>1$) must be of the form $\delta,\,x+\delta$ or $x+\delta+1$ for some $x\in Q$ and $\delta\in \Delta_{\mathbb{F}_2} Q=\Delta.$ Let $z$ be a zero divisor of $S.$
\vskip 0.1in
\textbf{Case 1.} $z=\delta$ for some $\delta\in\Delta.$ As $z\in Z,$ therefore there exists $y=\alpha x+\delta_1+\gamma\in Z,\,y\neq 0$ such that $zy=0.$ Number of such $y'$s will give us the out degree of $z$ in the zero divisor graph of $S.$ From the equation $zy=0,$ we get that $\alpha\delta x+\delta\delta_1+\gamma\delta=0.$ Since $Q$  is trivial, $\delta\delta_1=0$ and $\delta x=\delta.$ Therefore, it follows that $\alpha\delta+\gamma\delta=0.$ Comparing the coefficients of $\delta$ on both sides, we get $\alpha=\gamma.$ So $y$ must be of the form $x+\delta_1+1$ or $\delta_1,$ where $\delta_1\in\Delta$ is arbitrary. 

\vskip 0.1in 

It is known that for a finite field $\mathbb{F}$ with unity, the cardinality of $\Delta_\Bbbk(Q)$ is equal to $f^q,$ where $f$ is the order of $\mathbb{F},$ and $q$ is the dimension of $\Delta_\Bbbk(Q),$ which is $n-1.$ Consequently, the cardinality of $\Delta$ is $2^{n-1}.$ Therefore the out-degree of $z$ is $2^{n-1}+2^{n-1}-1=2^n-1.$ In particular, when $n=3,$ out-degree of all the elements $\Delta\setminus\{0\}=\{x_0+x_1,\,x_0+x_2,\,x_1+x_2\}$ is $7.$

\vskip 0.1in 

Next, we consider in-degree of $z.$ From the equation $yz=0,$ we get $\alpha x\delta+\gamma\delta=0.$ Clearly, $x\delta =0,$ so we have $\gamma=0$ (since $\delta\neq 0).$ Therefore $y$ is of the form $\alpha x+\delta_1.$ So $y=x+\delta_1$ or $y=\delta_1.$ It follows that the in-degree of $z$ is $2^n-1.$ In particular, when $n=3,$ in-degree of all the elements $\Delta\setminus\{0\}=\{x_0+x_1,\,x_0+x_1,\,x_1+x_2\}$ is $7.$
\vskip 0.1 in
\textbf{Case 2.} $z=x+\delta\in Z.$ We apply similar technique as that of Case 1. 

For out-degree computation, we have the following equation $$\alpha x+\alpha \delta+\gamma x+\gamma \delta=0,$$ which gives us $\alpha=\gamma.$ Therefore $y=x+\delta_1+1$ or $y=\delta_1.$ Out-degree $ =2^n-1.$ In particular, when $n=3,$ out-degree of all the elements of the form $\{x+\delta\}=\{x_0,\,x_1\,x_2,\,x_0+x_1+x_2\}$ is $7.$

\vskip 0.1in 

For in-degree computation, we have the following equation $$\alpha x+\delta_1+\gamma x+\gamma\delta=0.$$ As $y$ is also a zero divisor, therefore we have following $3$ choices.
\begin{enumerate}
    \item 
    $\alpha=0,\,\gamma=0.$ Then we have $\delta_1=0,$ which implies $y=0,$  a contradiction.
    \item 
    $\alpha=1,\,\gamma=0.$ Then we have $x+\delta_1=0,$ which is not possible as the augmentation value of $x+\delta_1$ is $1\neq 0.$
    \item 
    $\alpha=1,\,\gamma=1.$ Then we have $\delta=\delta_1,$ which leads to $y=x+\delta+1.$ (only one choice)
\end{enumerate}
In-degree= 1. In particular, when $n=3,$ in-degree of all the elements of the form $\{x+\delta\}=\{x_0,\,x_1,\,x_2,\,x_0+x_1+x_2\}$ is $1.$ 
\vskip 0.1in
\textbf{Case 3.} $z=x+\delta+1\in Z.$ We apply similar technique as that of Case 1. 

For out-degree computation, we have the following equation $$\gamma x+\alpha\delta+\gamma\delta+\delta_1+\gamma=0.$$ Clearly, $\gamma$ (the constant term) must be $0,$ which leaves us with $\alpha\delta+\delta_1=0.$ When $\alpha=0,$ we have $\delta_1=0,$ and consequently $y=0,$ which is not possible. Therefore $\alpha=1,$ and hence $y=x+\delta_1=x+\delta.$ Out-degree $=1.$ 

\vskip 0.1in

In particular, when $n=3,$ out-degree of all the elements of the form $\{x+\delta+1\}=\{x_0+1,\,x_1+1,\,x_2+1,\,x_0+x_1+x_2+1\}$ is $1.$ 

\vskip 0.1in

For in-degree computation, we have the following equation
\begin{equation}\label{eqn 7.5}
    \gamma x+\gamma\delta+\gamma=0.
\end{equation} 
Comparing the terms belonging to $\mathbb{F}_2,$ we get $\gamma=0.$ For $\gamma=0,$ equation~\ref{eqn 7.5} is always satisfied. Therefore $y=\alpha x+\delta_1,$ where $\alpha\in \mathbb{F}_2,\,\delta_1\in \Delta.$ $y$ is of the form $x+\delta_1$ or $\delta_1$ and hence in-degree =$2.2^{n-1}-1=2^n-1.$ 

\vskip 0.1in

In particular, when $n=3,$ in-degree of all the elements of the form $\{x+\delta+1\}=\{x_0+1,\,x_1+1,\,x_2+1,\,x_0+x_1+x_2+1\}$ is $7.$ 

\vskip 0.1in 

The above findings can be summarized as follows.
\begin{prop}
Let $S$ be the extended quandle ring of $\mathbb{F}_2[Q],$ where $Q$ is trivial quandle of order $n>1.$ Let $\delta$ be an arbitrary element of $\in\Delta_{\mathbb{F}_2}[Q]\setminus\{0\}$ and $x_0\in Q.$ Then we have the following:

\begin{table}[H]
\centering
\begin{tabular}{|c|c|c|}
\hline
\text{vertex} & \text{in-degree} & \text{out-degree} \\
\hline
$\delta$ & $2^n-1$  & $2^n-1$ \\
\hline
$x_0+\delta$ & $1$ & $2^n-1$ \\
\hline
$x_0+\delta+1$ & $2^n-1$ & $1$ \\
\hline
\end{tabular}
\caption{In-degrees and out-degrees of the zero divisor graph of the extended quandle ring of the trivial quandle of order $n$ over $\mathbb{F}_2.$}
\label{tab 1}
\end{table}

\FloatBarrier

\end{prop}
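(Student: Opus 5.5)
To prove the proposition I would classify the zero divisors and then count neighbours directly. By Remark~\ref{rmk6.2} (which rests on Example~\ref{exmp 7.3} and Theorem~\ref{thm 7.5}), every zero divisor of $S$ has one of three shapes: $\delta$ with $\delta\neq 0$, $x_0+\delta$, or $x_0+\delta+1$. Here $x_0\in Q$ is fixed and $\delta\in\Delta=\Delta_{\mathbb{F}_2}(Q)$. This form is unique because $S=\mathbb{F}_2x_0\oplus\Delta\oplus\mathbb{F}_2$. I will use three facts about the trivial quandle throughout:
\[
\delta\delta_1=0,\qquad x_0\delta=0,\qquad \delta x_0=\delta \quad\text{for all }\delta,\delta_1\in\Delta .
\]
I will also use $|\Delta|=2^{n-1}$. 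I then write a general nonzero element as $y=\alpha x_0+\delta_1+\gamma$ and solve $zy=0$ and $yz=0$ by comparing components in the decomposition above.

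\textbf{Case $z=\delta\neq0$.}
\begin{itemize}
\item Out-degree: $zy=(\alpha+\gamma)\delta$, so $zy=0$ forces $\alpha=\gamma$. The solutions are $y=\delta_1$ and $y=x_0+\delta_1+1$, giving $2\cdot 2^{n-1}-1=2^n-1$ nonzero choices.
\item In-degree: $yz=\gamma\delta$, so $yz=0$ forces $\gamma=0$. The solutions are $y=\alpha x_0+\delta_1$, again $2^n-1$ nonzero choices.
\end{itemize}

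\textbf{Case $z=x_0+\delta$.}
\begin{itemize}
\item Out-degree: $zy=(\alpha+\gamma)(x_0+\delta)$, which again forces $\alpha=\gamma$, so the count is $2^n-1$.
\item In-degree: $yz=(\alpha+\gamma)x_0+\delta_1+\gamma\delta$. Its $x_0$-component gives $\alpha=\gamma$ and its $\Delta$-component gives $\delta_1=\gamma\delta$. If $\gamma=0$ then $y=0$, so $\gamma=1$. The unique neighbour is $y=x_0+\delta+1$, and the in-degree is $1$.
\end{itemize}

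\textbf{Case $z=x_0+\delta+1$.}
\begin{itemize}
\item Out-degree: $zy=\gamma x_0+(\alpha+\gamma)\delta+\delta_1+\gamma$. The scalar part gives $\gamma=0$, and then $\delta_1=\alpha\delta$. Nonzeroness of $y$ forces $\alpha=1$, so the only neighbour is $y=x_0+\delta$ and the out-degree is $1$.
\item In-degree: $yz=\gamma(x_0+\delta+1)$, so $yz=0$ exactly when $\gamma=0$. This gives $2^n-1$ nonzero choices of $y$.
\end{itemize}

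Every $y$ found above is nonzero and annihilates or is annihilated by something nonzero, so it automatically lies in $Z$ and genuinely counts as a vertex.

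The main obstacle is bookkeeping rather than ideas. Before expanding each product I must fix the normal form with a single $x_0$. This is legitimate because any $x+\delta'$ with $x\in Q$ can be rewritten as $x_0+(x-x_0+\delta')$, with $x-x_0+\delta'\in\Delta$. I must also decide how loops are counted: for instance $\delta^2=0$ puts a loop at $\delta$, and I will count it in both degrees, consistent with the convention stated earlier. With that settled, the three cases assemble directly into the table.
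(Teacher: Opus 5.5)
Your proposal is correct and follows essentially the paper's route. Both write the general element in the normal form $\alpha x_0+\delta_1+\gamma$, use $\Delta^2=0$, $x_0\delta=0$ and $\delta x_0=\delta$, compare components in $\mathbb{F}_2x_0\oplus\Delta\oplus\mathbb{F}_2$, and count using $|\Delta|=2^{n-1}$. Your explicit handling of loops (counted in both degrees, as the tables require) and your check that every neighbour found is itself a zero divisor are small clarifications that the paper leaves implicit.
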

From Table~\ref{tab 1}, we can see that any two elements of the form $x_0+\delta$ and $x_0+\delta_1+1$ exhibit mirror symmetry with respect to their in-degrees and out-degrees. If we add them, it gives us  the unity $1$ which is clearly not a zero divisor. For completeness, we can also say that $\delta$ is mirror symmetric to itself, and $\delta+\delta=0,$ which is not a zero divisor. Thus, we have the following theorem.
\begin{thm}
    Let $Z$ be the set of zero divisors of the extended quandle ring $S$ of $\mathbb{F}_2[Q],$ where $Q$ is a finite order trivial quandle. Then the sum of any pair of elements $(z_1,\,z_2)\in Z\times Z$ exhibiting mirror symmetry is not a zero divisor.
\end{thm}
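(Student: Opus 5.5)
The plan is to read off the classification of zero divisors and their degrees from Remark~\ref{rmk6.2} and the preceding proposition (Table~\ref{tab 1}), then determine exactly which pairs exhibit mirror symmetry, and check each resulting sum directly. By Remark~\ref{rmk6.2}, every $z\in Z$ is of one of three types: $\delta$ with $\delta\in\Delta\setminus\{0\}$, $x_0+\delta$, or $x_0+\delta+1$ with $\delta\in\Delta$. Here we fix $x_0\in Q$ and write every element as $\alpha x_0+\delta+\gamma$; the different $x\in Q$ are absorbed into $\delta$, since $x-x_0\in\Delta$. The table gives the degree pairs (in, out) as $(2^n-1,2^n-1)$, $(1,2^n-1)$ and $(2^n-1,1)$ respectively. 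Since $n>1$, we have $2^n-1\geq 3>1$, so these three pairs are distinct.

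First I will list the mirror-symmetric pairs. A type-$\delta$ vertex has symmetric degree pair, so its mirror partners are exactly the other type-$\delta$ vertices. A type $x_0+\delta$ vertex has pair $(1,2^n-1)$, whose swap $(2^n-1,1)$ occurs only for type $x_0+\delta_1+1$. Conversely, type $x_0+\delta+1$ mirrors only type $x_0+\delta_1$. Type $x_0+\delta$ cannot mirror itself, because $1\neq 2^n-1$.

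Next I will compute the sums over $\mathbb{F}_2$.
\begin{enumerate}
\item For a pair of type $x_0+\delta$ and type $x_0+\delta_1+1$, the sum is $2x_0+\delta+\delta_1+1=(\delta+\delta_1)+1$. By Example~\ref{exmp 7.3} this is a unit, hence not a zero divisor. This also covers the case $\delta_1\neq\delta$ in general, not only the sum equal to $1$ noted in the text.
\item For a pair of type-$\delta$ elements $\delta,\delta'$, the sum $\delta+\delta'$ lies in $\Delta$, and it is either $0$ (when $\delta=\delta'$) or a nonzero element of $\Delta$.
\end{enumerate}

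The main obstacle is this last subcase. A nonzero element of $\Delta$ is a zero divisor by the proposition, with out-degree $2^n-1>0$. So the claim as literally stated holds only for the pairing of $\delta$ with itself, together with the $x_0+\delta \leftrightarrow x_0+\delta_1+1$ pairings. I would therefore adopt the paper's convention, stated just before the theorem, that the mirror partner of $\delta$ is $\delta$ itself. I would make that convention explicit, so that the mirror-symmetric pairs are exactly $(\delta,\delta)$ and $(x_0+\delta,\,x_0+\delta_1+1)$ in either order. With this reading, the two computations above complete the proof, since $0\notin Z$ and $(\delta+\delta_1)+1$ is a unit.
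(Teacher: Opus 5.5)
Your argument is the paper's argument: read the three degree types off Table~\ref{tab 1}, pair $x_0+\delta$ with $x_0+\delta_1+1$ and $\delta$ with itself, and check the sums. You are in fact more careful than the paper, which says the first sum is always the unity $1$; in general that sum is $(\delta+\delta_1)+1$, which is a unit by Example~\ref{exmp 7.3}.

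Your caveat is also correct, and it applies equally to the paper's proof, which never considers pairs $\delta\neq\delta'$. Under the stated definition of mirror symmetry the theorem fails for $n\geq 3$: for example, $x_0+x_1$ and $x_0+x_2$ both have in- and out-degree $7$ in Table~\ref{tab3}, yet their sum $x_1+x_2$ lies in $Z$. So you should present your restricted pairing as a necessary correction of the statement, not as a convention taken from the paper.
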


\vskip 0.1in

\subsection{Zero-divisor graph of the extended quandle ring of the Joyce quandle}

We now turn our focus onto the zero-divisors of the extended quandle ring of the Joyce quandle over an integral domain. We will also demonstrate that the vertices of its zero-divisor graph also exhibit the mirror symmetry. 

\vskip 0.1in

We first find the out-degrees of the vertices that correspond to elements in the regular quandle ring. The out-degree of a nonzero element $a\in S$ is equal to the number of nonzero $b$'s such that $ab=0$. 

\vskip 0.1in

Consider the nonzero elements $\alpha_0e_0+\alpha_1e_1+\alpha_2e_2\in \Bbbk[Q]$ and $\beta_0e_0+\beta_1e_1+\beta_2e_2+\gamma\in S$, and set 
$$(\alpha_0e_0+\alpha_1e_1+\alpha_2e_2)\cdot (\beta_0e_0+\beta_1e_1+\beta_2e_2+\gamma)=0.$$

It is easy to see that the element $\alpha_0e_0+\alpha_1e_1+\alpha_2e_2$ is a left-zero divisor if and only if the following system of equations is solvable in $\Bbbk$ such that not all $\beta_j$'s are zero:

$$\alpha_0(\beta_0+\beta_1+\beta_2+\gamma)=0,$$
$$\alpha_1(\beta_1+\beta_2+\gamma)+\alpha_2\beta_0=0,$$
$$\alpha_2(\beta_1+\beta_2+\gamma)+\alpha_1\beta_0=0.$$
\vskip 0.1in
The first equation implies that $\alpha_0=0$ or $\beta_0+\beta_1+\beta_2+\gamma=0$. We let $\Bbbk=\mathbb{F}_2$, and consider two cases: $\alpha_1=\alpha_2$, $\alpha_1\neq \alpha_2$. \vskip 0.1in

\textbf{Case 1.} $\alpha_1=\alpha_2$. In this case, the left zero-divisor $e_1+e_2$ has out-degree $2^3-1=7$ ($\alpha_0=0$, $\alpha_1=\alpha_2=1$ and $\gamma=\beta_0+\beta_1+\beta_2$). Also, the out-degree of the vertices $e_0$ and $e_0+e_1+e_2$ is $2^3-1=7$ ($\alpha_0=\alpha_1=\alpha_2=1$ and $\gamma=\beta_0+\beta_1+\beta_2$). \vskip 0.1in

\textbf{Case 2.} $\alpha_1\neq \alpha_2$. We have the following:
\begin{itemize}
\item The out-degree of $e_1$ and $e_2$ is $2^2-1=3$ ($\alpha_0=0$,  $\alpha_1\neq \alpha_2$ and $\gamma=\beta_1+\beta_2$). 
\item The out-degree of $e_0+e_1$ and $e_0+e_2$ is $2^2-1=3$ ($\alpha_0=1$,  $\alpha_1\neq \alpha_2$ and $\gamma=\beta_1+\beta_2$). 
\end{itemize}

We now compute the in-degrees of the zero-divisors in the quandle ring $\mathbb{F}_2[Q]$, where $Q$ is the Joyce quandle. \vskip 0.1in

Consider an arbitrary element $\beta_0e_0+\beta_1e_1+\beta_2e_2+\gamma$ in the extended quandle ring. Then 
$(\beta_0e_0+\beta_1e_1+\beta_2e_2+\gamma)\cdot e_j=0$ implies $\beta_j=\gamma$ and $\beta_k=0$ for $j\neq k$. Thus the in-degree of the elements $e_j$ is 1. Also, $(\beta_0e_0+\beta_1e_1+\beta_2e_2+\gamma)\cdot (e_0+e_1+e_2)=0$ implies $\beta_0=\beta_1=\beta_2=\gamma$, and therefore the in-degree of $e_0+e_1+e_2$ is also 1. 
\vskip 0.1in
Consider $(\beta_0e_0+\beta_1e_1+\beta_2e_2+\gamma)\cdot (e_0+e_j)=0$, where $j\neq 0.$ Then we have $\gamma=0$ and $\alpha_1=\alpha_2$. Thus the in-degree for $(e_0+e_1)$ and $(e_0+e_2)$ is 3 as there are $2^2-1=3$ nonzero choices for the triple $(\alpha_0, \alpha_1, \alpha_1)$.
\vskip 0.1in
Set $(\beta_0e_0+\beta_1e_1+\beta_2e_2+\gamma)\cdot (e_1+e_2)=0$. Then we obtain $\gamma = 0$. This means we have $2^3-1=7$ nonzero choices for the triple $(\alpha_0, \alpha_1, \alpha_2)$. 
\vskip 0.1in
Now we compute the in-degrees and out-degrees of the elements in the extended quandle ring but not in the regular quandle ring. 

Consider the nonzero elements $\alpha_0e_0+\alpha_1e_1+\alpha_2e_2+\gamma \in S\setminus \Bbbk[Q]$ and $\beta_0e_0+\beta_1e_1+\beta_2e_2\in \Bbbk[Q]$, and set 
$$(\alpha_0e_0+\alpha_1e_1+\alpha_2e_2+\gamma)\cdot (\beta_0e_0+\beta_1e_1+\beta_2e_2)=0.$$

It is easy to see that the element $\alpha_0e_0+\alpha_1e_1+\alpha_2e_2+\gamma$ is a left-zero divisor if and only if the following system of equations is solvable in $\Bbbk$ such that $\beta_0\beta_1\beta_2\neq 0$:
\begin{equation}\label{EQ6.7}
\begin{cases} 
\alpha_0(\beta_0+\beta_1+\beta_2)+\gamma\beta_0=0,\\
\alpha_1(\beta_1+\beta_2)+\alpha_2\beta_0+\gamma\beta_1=0,\\
\alpha_1\beta_0+\alpha_2(\beta_1+\beta_2)+\gamma\beta_2=0.\\
\end{cases} 
\end{equation} 

Adding the last two equations gives us 
$$(\beta_1+\beta_2)(\alpha_1+\alpha_2+\gamma)+(\alpha_1+\alpha_2)\beta_0=0.$$

If $\alpha_1=\alpha_2$, we have $\gamma (\beta_1+\beta_2)=0$. Since $\gamma \neq 0$, we have $\beta_1=\beta_2$. From the equation in \eqref{EQ6.7}, we have $(\alpha_0+\gamma)\beta_0=0$, which implies $\beta_0=0$ or $\alpha_0=\gamma$. Since $\gamma\neq 0$, we have $\beta_0=0$. We have $\alpha_1=\alpha_2$, $\beta_1=\beta_2$, $\alpha_0\neq 0$, and $\beta_0=0$. This implies the out-degree of $e_0+1$ (when $\alpha_1=\alpha_2=0$) and $e_0+e_1+e_2+1$ (when $\alpha_1=\alpha_2=1$) is 1. 
\vskip 0.1in
Now assume that $\alpha_0=0$ and $\alpha_1 \neq \alpha_2$. Since $\alpha_0=0$ and $\gamma\neq 0$, from the first equation in \eqref{EQ6.7}, we have $\beta_0=0$. From the second equation in \eqref{EQ6.7}, we have $\alpha_1(\beta_1+\beta_2)+\beta_1=0$. If $\alpha_1=0$, $\beta_1=0$, and if $\alpha_1=1$, $\beta_2=0$. The two cases correspond to the zero-divisors $e_1+1$ and $e_2+1$, each of which has out-degree 1 because $\beta_0=0$ and $\beta_j=0$ for $j\neq 0$ in each case. 
\vskip 0.1in
Now we compute the in-degree of the zero-divisors $e_0+1$, $e_1+1$, $e_2+1$, $e_0+e_1+e_2+1$. 

Consider an arbitrary element $\alpha_0e_0+\alpha_1e_1+\alpha_2e_2$. 

$$(\alpha_0e_0+\alpha_1e_1+\alpha_2e_2)\cdot (e_0+1)=0$$
implies $\alpha_1=\alpha_2$. Thus there are $2^2-1$ many nonzero choices, which is equal to the out-degree of $(e_0+1)$, for the triple $(\alpha_0,\alpha_1,\alpha_1)$. 

$$(\alpha_0e_0+\alpha_1e_1+\alpha_2e_2)\cdot (e_j+1)=0,$$
where $j\neq 0$. This implies $2^3-1$ many nonzero choices, which is equal to the out-degree of $(e_j+1)$, where $j\neq 0$, for the triple $(\alpha_0,\alpha_1,\alpha_2)$. 

$$(\alpha_0e_0+\alpha_1e_1+\alpha_2e_2)\cdot (e_0+e_1+e_2+1)=0$$
implies $\alpha_1=\alpha_2$. Thus there are $2^2-1$ many nonzero choices, which is equal to the out-degree of $(e_0+e_1+e_2+1)$, for the triple $(\alpha_0,\alpha_1,\alpha_1)$. \vskip 0.1in

We have now computed all the zero-divisors and their in-degrees and out-degrees which are presented in the following graph and the table. 

\begin{center}
\begin{tikzpicture}[
    >=Latex,
    edge/.style={
    draw,
    thick,
    postaction={decorate},
    decoration={
        markings,
        mark=at position 0.50 with {\arrow{Latex}}
    }
}, 
]

% Vertices
\coordinate (A) at (-2,2);
\coordinate (B) at (-1,4);
\coordinate (C) at (1,5);
\coordinate (D) at (3,5);
\coordinate (E) at (5,4);
\coordinate (F) at (6,2);
\coordinate (G) at (6,0);
\coordinate (H) at (5,-1);
\coordinate (I) at (2,-2);
\coordinate (N) at (-1,-1);
\coordinate (O) at (-2,1);

% Solid points
\fill (A) circle (2.2pt);
\fill (B) circle (2.2pt);
\fill (C) circle (2.2pt);
\fill (D) circle (2.2pt);
\fill (E) circle (2.2pt);
\fill (F) circle (2.2pt);
\fill (G) circle (2.2pt);
\fill (H) circle (2.2pt);
\fill (I) circle (2.2pt);
\fill (N) circle (2.2pt);
\fill (O) circle (2.2pt);

% Labels
\node[left] at (A) {$x_0$};
\node[left] at (B) {$x_1$};
\node[left] at (C) {$x_2$};
\node[above right] at (D) {$x_0+1$};
\node[right] at (E) {$x_1+1$};
\node[right] at (F) {$x_2+1$};
\node[above right] at (G) {$x_0+x_1$};
\node[right] at (H) {$x_0+x_2$};
\node[left] at (I) {$x_1+x_2$};
\node[left] at (N) {$x_0+x_1+x_2$};
\node[left] at (O) {$x_0+x_1+x_2+1$};

\draw[edge] (A) -- (D);
\draw[edge] (D) to[bend right=15] (A);
\draw[edge] (A) -- (E);
\draw[edge] (A) -- (F);
\draw[edge] (A) -- (G);
\draw[edge] (A) -- (H);
\draw[edge] (A) -- (I);
\draw[edge] (A) -- (O);
% from and to x_1
\draw[edge] (B) -- (E);
\draw[edge] (E) to[bend left=15] (B);
\draw[edge] (B) -- (F);
\draw[edge] (B) -- (I);
% from and to x_2
\draw[edge] (C) --  (E);
%\draw[edge] (C) to[bend left=25]  (E);
\draw[edge] (C) -- (F);
\draw[edge] (C) -- (I);
\draw[edge] (F) to[bend right=15] (C);
%\draw[edge] (F) -- (C);
% to x_0+1
\draw[edge] (N) -- (D);
\draw[edge] (I) -- (D);
% from and to x_1+1
\draw[edge] (N) -- (E);
\draw[edge] (I) -- (E);
\draw[edge] (H) -- (E);
\draw[edge] (G) -- (E);
% from and to x_2+1
\draw[edge] (N) -- (F);
\draw[edge] (I) -- (F);
\draw[edge] (H) -- (F);
%\draw[edge] (H) to[bend right=85] (F);
\draw[edge] (G) -- (F);
% from and to x_0+x_1
\draw[edge] (N) -- (G);
\draw[edge] (I) to[bend left=15] (G);
%\draw[edge] (I) to[bend right=95] (G);
\draw[edge] (G) -- (I);
%\draw[edge] (G) to[bend left=45] (I);
% from and to x_0+x_2
\draw[edge] (N) -- (H);
\draw[edge] (I) -- (H);
\draw[edge] (H) to[bend left=25] (I);
% from and to x_1+x_2
\draw[edge] (N) -- (I);
\draw[edge] (I) -- (O);
\draw[edge] (I) to[out=-50,in=-150, looseness=10,out distance=2cm,in distance=2cm]  (I);
% from and to x_0+x_1+x_2
\draw[edge] (N) -- (O);
\draw[edge] (O) to[bend right=25] (N);
\end{tikzpicture}
\captionof{figure}{Zero-divisor graph of the extended quandle ring of the Joyce quandle over $\mathbb{F}_2$}
\label{fig2}
\end{center}

\begin{table}[h]
    \centering
    %\caption{Sample Table Description}
    %\label{tab:sample_table3}
    \begin{tabular}{|c|c|c|}
        \hline
        Vertex & In-degree & Out-degree \\
        \hline
        $e_0$ & 1 & $2^3-1$ \\
        $e_j$, $j\neq 0$  & 1 & $2^2-1$ \\
          $e_0+1$  & $2^2-1$ & 1 \\
           $e_j+1$, $j\neq 0$  & $2^3-1$ & 1 \\
        $e_0+e_j$, $j\neq 0$  & $2^2-1$ & $2^2-1$ \\
         $e_1+e_2$  & $2^3-1$ & $2^3-1$ \\
         $e_0+e_1+e_2$  & 1 & $2^3-1$ \\
          $e_0+e_1+e_2+1$  & $2^2-1$ & 1 \\
            \hline
  \end{tabular}
  \captionof{table}{In-degree and out-degree table of Figure~\ref{fig2}}
\label{tab2}
\end{table}
%%%%%%% 

%\begin{minipage}{0.48\textwidth}

\vskip 0.1in 

It is clear from Table~\ref{tab2} that, as in the case of the quandle ring of the trivial quandle, the sum of two zero-divisors with mirror symmetry is not a zero-divisor.
\vskip 0.1in
\subsection{Zero-divisor graph of the extended quandle ring of the dihedral quandle}
There are three quandles of order 3 up to isomorphism, and among them, there is only one commutative quandle, called the dihedral quandle. The zero-divisor graph of the extended quandle ring of the dihedral quandle over $\mathbb{F}_2$ is an example of an undirected graph as the ring is commutative. 
 %\textcolor{red}{include the graph}
\begin{center}
\begin{tikzpicture}[
    >=Latex,
    edge/.style={
    draw,
    },
]

% Vertices
\coordinate (A) at (-1,2); %x_0
\coordinate (B) at (1,2); %x_1
\coordinate (C) at (2,0);%x_2
\coordinate (D) at (-2,2);%x_0+1
\coordinate (E) at (0,2); %x_1+1
\coordinate (F) at (2,1);%x_2+1
\coordinate (G) at (2,-1); %x_0+x_2
\coordinate (H) at (0,-2);%x_0+x_1
\coordinate (I) at (-2,-1); %x_1+x_2
\coordinate (J) at (1,-2);%x_0+x_2+1
\coordinate (K) at (-1,-2);%x_0+x_1+1
\coordinate (L) at (-3,-1);%{$x_1+x_2+1$}
\coordinate (M) at (0,0); %{$x_0+x_1+x_2$}
\coordinate (N) at (-3,1);%{$x_0+x_1+x_2+1$}

% Solid points
\fill (A) circle (2.2pt);
\fill (B) circle (2.2pt);
\fill (C) circle (2.2pt);
\fill (D) circle (2.2pt);
\fill (E) circle (2.2pt);
\fill (F) circle (2.2pt);
\fill (G) circle (2.2pt);
\fill (H) circle (2.2pt);
\fill (I) circle (2.2pt);
\fill (J) circle (2.2pt);
\fill (K) circle (2.2pt);
\fill (L) circle (2.2pt);
\fill (M) circle (2.2pt);
\fill (N) circle (2.2pt);

% Labels
\node[above] at (A) {$x_0$};
\node[right] at (B) {$x_1$};
\node[right] at (C) {$x_2$};
\node[left] at (D) {$x_0+1$};
\node[above] at (E) {$x_1+1$};
\node[above] at (F) {$x_2+1$};
\node[above right] at (G) {$x_0+x_2$};
\node[below] at (H) {$x_0+x_1$};
\node[below] at (I) {$x_1+x_2$};
\node[right] at (J) {$x_0+x_2+1$};
\node[left] at (K) {$x_0+x_1+1$};
\node[left] at (L) {$x_1+x_2+1$};
\node[left] at (M) {$x_0+x_1+x_2$};
\node[left] at (N) {$x_0+x_1+x_2+1$};

\draw[edge] (M) -- (D);
\draw[edge] (D) -- (A);
\draw[edge] (M) -- (E);
\draw[edge] (E) -- (B);
\draw[edge] (M) -- (F);
\draw[edge] (F) -- (C);
\draw[edge] (M) -- (G);
\draw[edge] (G) -- (J);
\draw[edge] (M) -- (H);
\draw[edge] (H) -- (K);
\draw[edge] (M) -- (I);
\draw[edge] (I) -- (L);
\draw[edge] (M) -- (N);
%\draw[edge] (I) -- (D);

\end{tikzpicture}
\captionof{figure}{Zero-divisor graph of the extended quandle ring of the dihedral quandle over $\mathbb{F}_2$}
\label{fig4}
\end{center}
\vskip 0.1cm
From the graph, it is evident that every element of the extended dihedral quandle ring over $\mathbb{F}_2$, other than $0$ and the unity, is a zero divisor. It is also observed that any vertex $x$ with $\epsilon^o(x)=0$ has degree $2.$ On the other hand, every vertex with $\epsilon^o(x)=1$ has degree $1$, with the unique exception of the vertex $x_0+x_1+x_2$, which has degree $7$. 

\vskip 0.1in 

We believe that the results in this section would motivate the quandle ring enthusiasts for a further investigation on zero-divisor graphs of quandle rings. 

\section{Prime rings and semi-prime rings}\label{S7}

A ring $\Bbbk$ is prime if the product of any two nonzero ideals of $\Bbbk$ is nonzero. A ring $\Bbbk$ is semi-prime if the only ideal of $\Bbbk$ which squares to zero is the zero ideal. Every prime ring is semi-prime, but the converse is not true in general. In this section, we explain a case in which the quandle ring $\mathbb{Z}_p[Q]$, where $p$ is a prime number, is not semi-prime, and therefore not prime. 

\begin{prop}\label{P1}
Let $p$ be prime and $Q$ be a quandle of order $n\geq 2$. If $p\,|\,n$, then the quandle ring $\mathbb{Z}_p[Q]$ is not semi-prime, and therefore not prime.     
\end{prop}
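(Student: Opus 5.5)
The plan is to exhibit an explicit nonzero two-sided ideal $I$ of $\mathbb{Z}_p[Q]$ with $I^2=0$. The natural candidate is built from the \emph{norm element}
$$N=\sum_{x\in Q}e_x\in\mathbb{Z}_p[Q].$$
It is nonzero because $Q$ is finite and nonempty. Its key multiplicative properties come from axiom (ii).

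First I would record the basic identities. Since every column map $\beta_y$ is a bijection,
$$N e_y=\sum_{x\in Q}e_{x\ast y}=N$$
for every $y\in Q$. By linearity, $N u=\epsilon(u)N$ for all $u\in\mathbb{Z}_p[Q]$, where $\epsilon$ is the augmentation map. In particular $N^2=\epsilon(N)N=nN=0$ because $p\mid n$. Every element $L_y:=e_yN=\sum_{x}e_{y\ast x}$ also has augmentation $n\equiv 0\pmod p$, so $N L_y=\epsilon(L_y)N=0$.

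Next I would take $I$ to be the two-sided ideal generated by $N$, that is, the $\mathbb{Z}_p$-span of all iterated left and right products of $N$ with basis elements. Two facts would then be established in order.
\begin{enumerate}
\item Every element of $I$ has augmentation $0$. This holds because $\epsilon$ is a ring homomorphism and $\epsilon(N)=0$, so $I\subseteq \Delta(Q)$.
\item Every product $ab$ with $a,b\in I$ vanishes.
\end{enumerate}
For (2), fact (1) already gives $N b=\epsilon(b)N=0$ for $b\in I$, so the generator itself left-annihilates $I$. The remaining task is to propagate this annihilation from $N$ to all of $I$. Here I would use the self-distributivity axiom (iii), which gives $(u e_z)(v e_z)=(uv)e_z$ on basis elements. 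The right translates satisfy $L_y e_z=L_{y\ast z}$, so right multiplication maps the family $\{N\}\cup\{L_y\}$ into itself. It therefore suffices to control $L_y\cdot L_{y'}$ and their left translates $e_w L_y$.

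The main obstacle is precisely this: left multiplication in a non-Latin quandle is not a bijection, so $e_w N$ need not equal $N$ and the ideal generated by $N$ may be larger than $\mathbb{Z}_p N$. If the direct computation of $L_yL_{y'}$ does not collapse to $0$, my fallback is to change the ideal rather than the generator. I would take
$$I'=\{u\in\mathbb{Z}_p[Q]\;:\;u\,v=\epsilon(v)u \text{ for all } v\}\cap\Delta(Q),$$
which contains $N$. I would then check directly, using axiom (iii), that $I'$ is closed under multiplication by elements of $\mathbb{Z}_p[Q]$ on both sides, since its defining property is linear and compatible with self-distributivity. For $a,b\in I'$ the defining property gives $ab=\epsilon(b)a=0$, so $I'^2=0$ immediately. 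The real work in this route is verifying the two-sided closure, and I would do that check first on basis elements before committing to either ideal. Once a nonzero ideal squaring to zero is in hand, $\mathbb{Z}_p[Q]$ is not semi-prime, and hence not prime.
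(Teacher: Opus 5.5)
\textbf{There is a genuine gap, and it cannot be repaired: the proposition as stated is false.} The step you defer as ``the real work'' is closing $\langle N\rangle$, or your fallback $I'$, under \emph{left} multiplication. That step fails for non-Latin quandles, and no other choice of ideal helps. Take $p=2$ and $Q=R_3\sqcup\{z\}$ of order $4$. Here $R_3=\{0,1,2\}$ has $a\ast b=2b-a$, and the two pieces act trivially on each other: $a\ast z=a$ and $z\ast a=z\ast z=z$.
\begin{itemize}
\item In $\mathbb{Z}_2[Q]$ one has $e_zu=\epsilon(u)e_z$ and $e_z^2=e_z$. So any ideal $I$ with $I^2=0$ lies in $\Delta(Q)$.
\item Every nonzero element of $\Delta(R_3)$ is idempotent, since $(e_a+e_b)^2=e_a+e_b+2e_c=e_a+e_b$.
\item Let $u\in I$ be nonzero. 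If $u\in\Delta(R_3)$, then $u^2=u\neq 0$.
\item Otherwise $u=r+e_z$ with $r\in\{e_0,e_1,e_2,e_0+e_1+e_2\}$. Then $e_cu=e_cr+e_c$ lies in $I\cap\mathbb{Z}_2[R_3]\subseteq\Delta(R_3)$, and it is nonzero for a suitable $c$. For $r=e_a$ take $c\neq a$, giving $e_{2a-c}+e_c$; for $r=e_0+e_1+e_2$ any $c$ works. So again $I^2\neq 0$.
\end{itemize}
Hence $\mathbb{Z}_2[Q]$ is semi-prime even though $2\mid 4$. For your two candidates specifically: $e_0N=e_1+e_2$ lies in $\langle N\rangle$ and squares to itself. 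Your $I'$ here equals $\mathbb{Z}_2N$, since the orbits are $R_3$ and $\{z\}$, and it does not contain $e_0N$.

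The paper's proof has exactly the hole you spotted. It infers $\langle x_0+\cdots+x_{n-1}\rangle=\{\alpha N\}$ from $Nu=\epsilon(u)N$ alone and never examines $uN$. This already fails for the Joyce quandle of order $3$ with $p=3$, which appears in the paper. There $\langle N\rangle$ contains $e_2N=e_1-e_2$, and $(e_1-e_2)N=e_1-e_2\neq 0$, so $\langle N\rangle^2\neq 0$. The conclusion survives in that case only because $\mathbb{Z}_3(e_1-e_2)$ is itself an ideal of square zero.

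What the identities $Ne_y=N$ and $N^2=nN=0$ actually give is a nonzero \emph{right} ideal $\mathbb{Z}_pN$ of square zero. In an associative ring $R$, a square-zero right ideal $A$ yields the square-zero two-sided ideal $A+RA$. That step uses associativity, which quandle rings lack.

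The statement becomes correct under a hypothesis forcing $e_wN\in\mathbb{Z}_pN$ for all $w\in Q$. Examples are $Q$ Latin, where $e_wN=N$, and $Q$ trivial, where $e_wN=ne_w=0$. Then $\mathbb{Z}_pN$ is a two-sided ideal and $(\alpha N)(\beta N)=n\alpha\beta N=0$.
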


\begin{proof}
Let $Q=\{x_0,x_1,\ldots, x_{n-1}\}$. Consider the ideal generated by $x_0+x_1+\cdots +x_{n-1}$: $\langle x_0+x_1+\cdots + x_{n-1}\rangle$.\vskip 0.1in 

\noindent Clearly, it is straightforward to see that 
\[
\begin{split}
&(x_0+x_1+\cdots +x_{n-1})\cdot (\alpha_0x_0+\alpha_1x_1+\cdots + \alpha_{n-1}x_{n-1})\cr
&=(\alpha_0+\alpha_1+\cdots +\alpha_{n-1})(x_0+x_1+\cdots +x_{n-1}).
\end{split}
\]

\noindent This means, in $\mathbb{Z}_p[Q]$, we have 

$$I=\langle x_0+x_1+\cdots + x_{n-1}\rangle=\{\alpha\cdot (x_0+x_1+\cdots +x_{n-1})\,|\,\alpha\in \mathbb{Z}_p\}.$$

\noindent Now consider $(\alpha\cdot (x_0+x_1+\cdots +x_{n-1}))^2$. We have 

$$(\alpha\cdot (x_0+x_1+\cdots +x_{n-1}))^2=n\cdot \alpha^2\cdot (x_0+x_1+\cdots +x_{n-1}).$$

\noindent Since $p\,|\,n$, $I^2=0$. We have shown that the quandle ring $\mathbb{Z}_p[Q]$ is not semi-prime, and therefore the ring is not prime. 

\end{proof}

We obtain the following immediate corollary which also appeared in \cite{BES-2026}. 

\begin{cor}
Let $p$ be prime and $Q$ be a quandle of order $n\geq 2$. Then $$\displaystyle{\frac{(x_0+x_1+\cdots +x_{n-1})}{n}}$$ is an idempotent in the quandle ring $\mathbb{Z}_p[Q]$.
\end{cor}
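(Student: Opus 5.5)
The plan is to reuse the identity already established in the proof of Proposition~\ref{P1}. Write $s=x_0+x_1+\cdots+x_{n-1}\in\mathbb{Z}_p[Q]$. The key fact is that for every $y=\sum_j\alpha_jx_j\in\mathbb{Z}_p[Q]$ we have
$$s\cdot y=\epsilon(y)\,s .$$
I would justify this directly from axiom (ii) of a quandle. Expanding gives $s\cdot y=\sum_j\alpha_j\sum_i x_i\ast x_j$. For each fixed $j$, the map $\beta_{x_j}\colon x_i\mapsto x_i\ast x_j$ is a bijection of $Q$, so $\sum_i x_i\ast x_j=s$. Hence $s\cdot y=\bigl(\sum_j\alpha_j\bigr)s$.

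Next I specialize to $y=s$. Since $\epsilon(s)=n$, this gives $s^2=n\,s$. For the element $\frac{s}{n}$ to make sense in $\mathbb{Z}_p[Q]$, $n$ must be invertible in $\mathbb{Z}_p$, which means $p\nmid n$. This is the complementary case to Proposition~\ref{P1}, and I will read it as the intended implicit hypothesis. When $p\mid n$ the same identity instead gives $s^2=0$, which is exactly the nilpotency exploited in Proposition~\ref{P1}.

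With $n^{-1}\in\mathbb{Z}_p$ available, and using that scalars from the commutative ring $\mathbb{Z}_p$ pull out of the bilinear product, the computation is
$$\Bigl(\tfrac{s}{n}\Bigr)^2=n^{-2}s^2=n^{-2}\,n\,s=\tfrac{s}{n}.$$
This shows that $\frac{s}{n}$ is an idempotent. It is also non-trivial: because $n\ge 2$, all $n$ of its coefficients are nonzero, so it is not of the form $e_x$ for any $x\in Q$.

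The only step needing care is the invertibility of $n$ modulo $p$. Everything else follows immediately once the column-permutation property gives $s^2=ns$.
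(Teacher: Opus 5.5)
Your proof is correct and is essentially the paper's argument. The paper reads the corollary off the identity $s\cdot y=\epsilon(y)\,s$, and hence $(\alpha s)^2=n\alpha^2 s$, from the proof of Proposition~\ref{P1} by taking $\alpha=n^{-1}$; you justify that identity explicitly through bijectivity of the right translations $\beta_{x_j}$. You are also right that the statement implicitly requires $p\nmid n$ for $\frac{1}{n}$ to exist in $\mathbb{Z}_p$, a hypothesis the paper leaves out.
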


\begin{rmk}
We can generalize the result in Proposition~\ref{P1}. Let $m>1$ be an integer and $Q$ a quandle of order $n\geq 2$. If $m\,|\,n$ or $m\,|\,\alpha^2$, then the quandle ring $\mathbb{Z}_m[Q]$ is not semi-prime, and therefore not prime.
\end{rmk}

\section{Polynomials and commutative quandles}\label{S8}

Given a quandle ring, it is natural to investigate the ring of polynomials whose coefficients are in the given quandle ring, and their applications.  Let $n\geq 1$ be an integer such that $2n+1$ is prime. In this section, we find a bivariate polynomial in the ring $(\mathbb{Z}_{2n+1}[Q])[X,Y]$ that defines the commutative symmetric quandle of order $2n+1$. We recall that a commutative quandle $Q$ of order $2n+1$ is a quandle whose operation is given by $r\ast s=(n+1)(r+s)$ for all $r,s,\in Q$.   \vskip 0.1in 

\noindent We apply bivariate Lagrange Interpolation to find a bivariate polynomial $f(X,Y)\in (\mathbb{Z}_{2n+1}[Q])[X,Y]$ such that $f(a,b)=e_{(n+1)(a+b)\pmod{2n+1}}$. \vskip 0.1in 

\noindent $\displaystyle{L_{(X,0)}(X)=\frac{1}{(-1)^{2n}\,0!\,(2n)!}\,\prod_{\substack{i=0\\i\neq 0}}^{2n}\,(X-i)}$\vskip 0.2in
\noindent $\displaystyle{L_{(X,1)}(X)=\frac{1}{(-1)^{2n-1}\,1!\,(2n-1)!}\,\prod_{\substack{i=0\\i\neq 1}}^{2n}\,(X-i)}$\vskip 0.2in
\noindent $\displaystyle{L_{(X,2)}(X)=\frac{1}{(-1)^{2n-2}\,2!\,(2n-2)!}\,\prod_{\substack{i=0\\i\neq 2}}^{2n}\,(X-i)}$\vskip 0.2in
\noindent $\cdots \cdots \cdots \cdots \cdots \cdots$ \vskip 0.2in
\noindent $\cdots \cdots \cdots \cdots \cdots \cdots$ \vskip 0.2in
\noindent $\displaystyle{L_{(X,2n)}(X)=\frac{1}{(-1)^{0}\,(2n)!\,0!}\,\prod_{\substack{i=0\\i\neq n-1}}^{2n}\,(X-i)}$\vskip 0.2in

\noindent We can write polynomials $L_{(X,\cdot)}(X)$ as follows:\vskip 0.2in 

\noindent \noindent $\displaystyle{L_{(X,a)}(X)=\frac{1}{(-1)^{2n-a}\,a!\,(2n-a)!}\,\prod_{\substack{i=0\\i\neq a}}^{2n}\,(X-i)}$\,\,\,\,\,for $0\leq a\leq 2n$. \vskip 0.2in

\noindent Because of symmetry we have 

\noindent \noindent $\displaystyle{L_{(Y,b)}(Y)=\frac{1}{(-1)^{2n-b}\,b!\,(2n-b)!}\,\prod_{\substack{j=0\\j\neq b}}^{2n}\,(Y-j)}$\,\,\,\,\,for $0\leq b\leq 2n$. \vskip 0.2in

Note that $(-1)^{2n-a}\,a!\,(2n-a)!=-1$. 

\vskip 0.2in

\noindent The polynomial $$\displaystyle{f(X,Y)=\sum_{a=0}^{2n}\,\sum_{b=0}^{2n}\, e_{(n+1)(a+b)\pmod{2n+1}}\cdot L_{(X,a)}(X)\,\cdot L_{(Y,b)}(Y)}\,\,\in \,\,(\mathbb{Z}_{2n+1}[Q])[X,Y]$$
is the desired polynomial. 

\vskip 0.1in

\begin{exmp}

\noindent Let $Q$ be the dihedral quandle of order 3 whose multiplication table is given below. 

\[
\begin{tabular}{c | c c c}
$\ast$ & $e_0$ & $e_1$ & $e_2$  \\
\cline{1-4}
$e_0$& $e_0$ & $e_2$ & $e_1$ \\
$e_1$& $e_2$ & $e_1$ & $e_0$ \\
$e_2$& $e_1$ & $e_0$ & $e_2$ \\
\end{tabular}
\]

\vskip 0.1in  

\noindent We need to find a polynomial $f(X,Y)$ such that 
$$f(0,0)=e_0,\hspace{0.2cm}f(0,1)=e_2,\hspace{0.2cm}f(0,2)=e_1$$
$$f(1,0)=e_2,\hspace{0.2cm}f(1,1)=e_1,\hspace{0.2cm}f(1,2)=e_0$$
$$f(2,0)=e_1,\hspace{0.2cm}f(2,1)=e_0,\hspace{0.2cm}f(2,2)=e_2$$

\vskip 0.2in 

\noindent We apply bivariate Lagrange Interpolation. \vskip 0.1in 

\noindent $\displaystyle{L_{(X,0)}(X)=\frac{(X-1)(X-2)}{(0-1)(0-2)}=\frac{1}{2}(X-1)(X-2)}$\vskip 0.2in
\noindent $\displaystyle{L_{(X,1)}(X)=\frac{(X-0)(X-2)}{(1-0)(1-2)}=-X(X-2)}$\vskip 0.2in
\noindent $\displaystyle{L_{(X,2)}(X)=\frac{(X-0)(X-1)}{(2-0)(2-1)}=\frac{1}{2}X(X-1)}$\vskip 0.2in

\noindent Because of symmetry, we have \vskip 0.1in

\noindent $\displaystyle{L_{(Y,0)}(Y)=\frac{(Y-1)(Y-2)}{(0-1)(0-2)}=\frac{1}{2}(Y-1)(Y-2)}$\vskip 0.2in
\noindent $\displaystyle{L_{(Y,1)}(Y)=\frac{(Y-0)(Y-2)}{(1-0)(1-2)}=-Y(Y-2)}$\vskip 0.2in
\noindent $\displaystyle{L_{(Y,2)}(Y)=\frac{(Y-0)(Y-1)}{(2-0)(2-1)}=\frac{1}{2}Y(Y-1)}$\vskip 0.2in

\[
\begin{split}
f(X,Y)&=   L_{(X,0)}(X)\Big(L_{(Y,0)}(Y)\cdot e_0+L_{(Y,1)}(Y)\cdot e_2+L_{(Y,2)}(Y)\cdot e_1\Big)\cr 
&+L_{(X,1)}(X)\Big(L_{(Y,0)}(Y)\cdot e_2+L_{(Y,1)}(Y)\cdot e_1+L_{(Y,2)}(Y)\cdot e_0\Big)\cr
&+L_{(X,2)}(X)\Big(L_{(Y,0)}(Y)\cdot e_1+L_{(Y,1)}(Y)\cdot e_0+L_{(Y,2)}(Y)\cdot e_2\Big)\cr
\end{split}
\]

\vskip 0.1in 

\[
\begin{split}
f(X,Y)&=  \frac{1}{2}(X-1)(X-2)\Big(\frac{1}{2}(Y-1)(Y-2)\cdot e_0-Y(Y-2)\cdot e_2+\frac{1}{2}Y(Y-1)\cdot e_1\Big)\cr 
&-X(X-2)\Big(\frac{1}{2}(Y-1)(Y-2)\cdot e_2-Y(Y-2)\cdot e_1+\frac{1}{2}Y(Y-1)\cdot e_0\Big)\cr 
&+\frac{1}{2}X(X-1)\Big(\frac{1}{2}(Y-1)(Y-2)\cdot e_1-Y(Y-2)\cdot e_0+\frac{1}{2}Y(Y-1)\cdot e_2\Big),
\end{split}
\]

\vskip 0.2in 

\noindent which is equivalent to 

\[
\begin{split}
f(X,Y)&=(2e_0+2e_1+2e_2)\,X^2+(2e_0+2e_1+2e_2)\,Y^2\cr
&+(e_0+e_1+e_2)\,XY\,+(e_1+2e_2)\,X+(e_1+2e_2)\,Y+e_0
\end{split}
\]

\noindent The polynomial $f(X,Y)$ is a symmetric polynomial. It is easy to check in characteristic three that 

$$f(0,0)=e_0,\hspace{0.2cm}f(0,1)=e_2,\hspace{0.2cm}f(0,2)=e_1$$
$$f(1,0)=e_2,\hspace{0.2cm}f(1,1)=e_1,\hspace{0.2cm}f(1,2)=e_0$$
$$f(2,0)=e_1,\hspace{0.2cm}f(2,1)=e_0,\hspace{0.2cm}f(2,2)=e_2$$

\end{exmp}

\section*{Acknowledgments}
Neranga Fernando is grateful to Allen Broughton, Daniel J. Katz and Mohamed Elhamdadi for the valuable discussions. Bhitali Kousik is grateful to her supervisor Prof. Dhiren Kumar Basnet at Tezpur University for his support in pursuing this collaboration and was financially supported by DST-INSPIRE Fellowship, Government of India (INSPIRE Reg. No. IF230368). Indu Rasika Churchill gratefully acknowledges SUNY Oswego for granting the sabbatical leave during which this research was conducted.

\end{document}